\documentclass[11pt,a4paper]{article}

\usepackage[utf8]{inputenc}
\usepackage[T1]{fontenc}
\usepackage{lmodern}
\usepackage[margin=1in]{geometry}
\usepackage{microtype}

\usepackage{amsmath,amssymb,amsfonts,amsthm,mathtools}
\usepackage{bm}
\usepackage{mathrsfs}
\usepackage{enumitem}
\usepackage{graphicx}
\usepackage{xcolor}
\usepackage{booktabs}
\usepackage{tikz}
\usepackage{pgfplots}
\usetikzlibrary{arrows.meta,decorations.markings,calc}
\pgfplotsset{compat=1.16}
\usepackage{hyperref}
\usepackage[nameinlink,capitalize,noabbrev]{cleveref}

\hypersetup{
    colorlinks=true,
    linkcolor=blue!60!black,
    citecolor=blue!60!black,
    urlcolor=blue!60!black
}

\usepackage{aliascnt}

\theoremstyle{plain}
\newtheorem{theorem}{Theorem}[section]

\newaliascnt{proposition}{theorem}
\newtheorem{proposition}[proposition]{Proposition}
\aliascntresetthe{proposition}

\newaliascnt{lemma}{theorem}
\newtheorem{lemma}[lemma]{Lemma}
\aliascntresetthe{lemma}

\newaliascnt{corollary}{theorem}
\newtheorem{corollary}[corollary]{Corollary}
\aliascntresetthe{corollary}

\newaliascnt{assumption}{theorem}

\aliascntresetthe{assumption}

\theoremstyle{definition}
\newaliascnt{definition}{theorem}
\newtheorem{definition}[definition]{Definition}
\aliascntresetthe{definition}

\newaliascnt{remark}{theorem}
\newtheorem{remark}[remark]{Remark}
\aliascntresetthe{remark}

\newaliascnt{example}{theorem}
\newtheorem{example}[example]{Example}
\aliascntresetthe{example}

\newtheorem*{theoremA}{Theorem A}
\newtheorem*{theoremB}{Theorem B}
\newtheorem*{theoremC}{Theorem C}
\newtheorem*{theoremD}{Theorem D}

\newcommand{\R}{\mathbb{R}}
\newcommand{\RP}{\mathbb{RP}}
\newcommand{\CP}{\mathbb{CP}}
\newcommand{\Sph}{\mathbb{S}}

\newcommand{\cA}{\mathcal{A}}
\newcommand{\cM}{\mathcal{M}}

\newcommand{\cL}{\mathcal{L}}
\newcommand{\cP}{\mathcal{P}}
\newcommand{\cS}{\mathcal{S}}
\newcommand{\cZ}{\mathcal{Z}}
\newcommand{\cH}{\mathcal{H}}
\newcommand{\cK}{\mathcal{K}}
\newcommand{\cC}{\mathcal{C}}

\newcommand{\supp}{\operatorname{supp}}
\newcommand{\dvg}{\operatorname{div}}
\newcommand{\osc}{\operatorname{osc}}
\newcommand{\dd}{\,\mathrm{d}}
\newcommand{\pair}[2]{\left\langle #1,#2\right\rangle}
\newcommand{\Lie}[2]{\left[#1,#2\right]}
\newcommand{\set}[1]{\left\{#1\right\}}
\newcommand{\abs}[1]{\left|#1\right|}
\newcommand{\norm}[1]{\left\|#1\right\|}
\newcommand{\ov}{\overline}
\newcommand{\wt}{\widetilde}
\usepackage{authblk}

\title{\textbf{Singular Aubry--Mather Sets for a Homogeneous Single-Input Control System}}
\author[1]{Bernard Bonnard}
\affil[1]{Institut de Math\'ematiques de Bourgogne - UMR 5584 CNRS, Universit\'e Bourgogne Europe ; INRIA, McTAO, Sophia Antipolis, France \\ \href{mailto:bernard.bonnard@ube.fr}{bernard.bonnard@ube.fr}}
\author[2]{Cristian Mendico}
\affil[2]{Institut de Math\'ematiques de Bourgogne - UMR 5584 CNRS, Universit\'e Bourgogne Europe \\ \href{mailto:cristian.mendico@ube.fr}{cristian.mendico@ube.fr}}
\date{\today}

\begin{document}

\maketitle

\begin{abstract}
We develop an Aubry--Mather theory for the abnormal arcs of a time-minimal
single-input affine control system in dimension three with homogeneous
cubic drift. We show that the Bonnard--Kupka semi-invariant defining the
abnormal locus is a Darboux polynomial of the desingularized singular field,
with an explicit cofactor; this gives the invariance of the abnormal locus,
a closed formula for the multiplier of its periodic components and, through
the antipodal symmetry of the model, their description as lifts of ovals of
a real plane quintic. For a compact invariant set of a singular flow we then
define a \emph{singular Aubry--Mather structure}, whose Lagrangian is
supported on the graph of the singular field and whose Hamilton--Jacobi
equation is linear in the momentum. We prove a duality formula for the
critical value, the existence of critical subsolutions strict outside the
Aubry set, and the covariance of the structure under time changes. On the
abnormal locus the structure selects the periodic components of minimal
average cost, governs transverse stability, and exhibits a sharp obstruction
to correctors along heteroclinic cycles; for the radial exponent of the
homogeneous dynamics, the corrector is the periodic part of the logarithmic
radius. An explicit cubic example illustrates the results.

\vspace{0.5em}
\noindent\textbf{Keywords.}
Optimal control; singular trajectories; abnormal extremals; weak KAM theory;
Aubry--Mather theory; ergodic optimization; invariant algebraic curves;
homogeneous vector fields.

\vspace{0.25em}
\noindent\textbf{MSC 2020.}
49L25, 37J50, 37J60, 49J15, 93C10, 34C05, 37C27.
\end{abstract}

%\tableofcontents

% =================================================================
\section{Introduction}\label{sec:intro}
% =================================================================

We consider the single-input affine control system
\begin{equation}\label{eq:system}
\dot v = H(v)+u\, b,\qquad v\in\R^3,\qquad \abs{u}\le 1,
\end{equation}
where \(H\) is a homogeneous cubic vector field and \(b\in\R^3\setminus\set{0}\)
is fixed, together with the associated time-minimal problem. When \(H\) is
quadratic, \eqref{eq:system} is the controlled Euler equation of a rigid body
with a single actuator, a classical model in attitude control
\cite{Wertz,Hughes,MarkleyCrassidis}, whose singular trajectories and
time-minimal synthesis have been studied in \cite{BMR}. The cubic model
retains the single-input affine structure and the determinantal singular
reduction of the quadratic case, while producing a different global
geometry for the abnormal dynamics.

Singular trajectories of single-input systems are classically analysed
through the generalized Legendre--Clebsch condition and the high-order
maximum principle; in dimension three, the singular control is an explicit
rational feedback and the abnormal (exceptional) singular arcs are confined
to an algebraic surface \cite{BK93,BC03}. Abnormal extremals are a
central theme of geometric control and sub-Riemannian geometry, where they
can be minimizing and are not detected by the Hamiltonian formalism of the
normal case \cite{Montgomery94,AgSa96,AS04,ABB20}. The singular dynamics is thus a
\emph{flow}: once the initial point is fixed there is, generically, a unique
singular arc through it. This is the main difference with the classical
Tonelli setting of Aubry--Mather and weak KAM theory
\cite{Mather1991,Mather1993,Mane97,Fathi,FS04,CI99,Bernard,Bernard08,Sorrentino,Evans},
where the minimizing objects are selected among all absolutely continuous
curves, and also with the sub-Riemannian and nonholonomic settings
\cite{CM}, where the velocity is constrained to a distribution but remains
free inside it.

The aim of this paper is to propose a definition of \emph{Aubry--Mather
structures for singular arcs}, to establish its basic properties in a form
which does not depend on the particular model, and to show that, for the
cubic system \eqref{eq:system}, it produces explicit and computable
minimizing objects supported on the abnormal locus. The guiding principle is
that the Lagrangian associated with a singular flow is supported on the
graph of the singular vector field. Closed measures are then exactly
invariant measures, the Hamilton--Jacobi equation becomes linear in the
momentum, and the theory meets the field of \emph{ergodic optimization}
\cite{Jenkinson,Bousch00,CLT01}, in which minimizing invariant measures and
sub-actions (the analogues of critical subsolutions) play the role of
Mather measures and weak KAM solutions.

Minimality is understood here relative to the singular dynamics: the
competitors of an abnormal arc are the other invariant objects of the
singular flow, and not the admissible trajectories of \eqref{eq:system},
which may use bang-bang or normal singular arcs. The comparison with the
full control problem is discussed in \Cref{sec:perspectives}.

\subsection*{Main results}

Let \(A=\Lie{b}{H}\). The singular reduction is governed by the
semi-invariants
\[
D=\det\bigl(b,A,\Lie{A}{b}\bigr),\qquad
D'=\det\bigl(b,A,\Lie{A}{H}\bigr),\qquad
D''=\det\bigl(b,A,H\bigr),
\]
homogeneous of degrees \(3,6,5\) respectively. On \(\set{D\neq 0}\), singular arcs are
the trajectories of \(X_s=H-(D'/D)b\), whose polynomial representative is
\(Z_s=DH-D'b\); abnormal arcs are contained in the cone \(\set{D''=0}\). We
work on the unit sphere with the tangential projection \(Y_s\) of \(Z_s\)
and with the spherical singular locus \(\Sigma_S=\set{D''=0}\cap\Sph^2\).

\begin{theoremA}[Darboux structure, \Cref{thm:darboux,thm:multiplier,cor:robust}]
The quintic \(D''\) satisfies \(\cL_{Z_s}D''=\cK\,D''\) with the explicit
cofactor
\[
\cK=\det\bigl(b,\Lie{A}{b},\Lie{A}{H}\bigr)+D\,\dvg H .
\]
Consequently \(\Sigma_S\) is invariant under \(Y_s\), and the characteristic
multiplier of every periodic orbit \(\Gamma\subset\Sigma_S\) of \(Y_s\) on
which \(D''\) is regular equals \(\exp\int_\Gamma k\dd\sigma\), with
\(k=\cK-5\pair{Z_s(v)}{v}\). When this multiplier differs from one, the
component is a hyperbolic limit cycle, approached by normal singular arcs of
both hyperbolic and elliptic type, and this property persists under small
perturbations of \((H,b)\).
\end{theoremA}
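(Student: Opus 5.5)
The plan is to prove the Darboux identity by a direct Lie-derivative computation and then deduce the remaining claims from it. Write \(D''=\det(b,A,H)\), with \(b\) constant, \(A=\Lie{b}{H}=DH\cdot b\) quadratic, and \(H\) cubic. For a constant vector \(b\) we have \(\cL_b F=DF\cdot b\). Since \(Z_s=DH-D'b\), it suffices to compute \(\cL_H D''\) and \(\cL_b D''\) separately.

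The derivative of \(D''\) along a vector field \(W\) is \(\det(b,DA\,W,H)+\det(b,A,DH\,W)\), because \(b\) is constant. For \(W=b\) this is \(\det(b,DA\,b,H)+\det(b,A,A)=\det(b,-\Lie{A}{b},H)\) up to the sign convention \(\Lie{X}{Y}=DY\,X-DX\,Y\); note that \(\Lie{A}{b}=-DA\,b\). For \(W=H\) we use \(DA\,H=\Lie{H}{A}+DH\,A\) and the Jacobi-type identity \(\det(DH\,u,v,w)+\det(u,DH\,v,w)+\det(u,v,DH\,w)=\mathrm{tr}(DH)\det(u,v,w)\). With \(u=b\), \(v=A\), \(w=H\), and using \(DH\,b=A\), the term \(\det(A,A,H)\) vanishes. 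This gives
\[
\cL_H D''=\det\bigl(b,\Lie{A}{H},H\bigr)\cdot(\pm1)+\dvg H\,D''
\]
after collecting terms.

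Combining the two pieces, \(\cL_{Z_s}D''=D\,\cL_HD''-D'\cL_bD''\) is a combination of determinants of the form \(\det(b,\cdot,H)\) with entries \(\Lie{A}{H}\) and \(\Lie{A}{b}\). The key step is then a Cramer/Plücker relation on \(\{D\neq0\}\), where \(b,A,\Lie{A}{b}\) form a basis. Expand \(\Lie{A}{H}=\alpha b+\beta A+\gamma\Lie{A}{b}\) with \(\gamma=D'/D\) and \(\beta=\det(b,\Lie{A}{H},\Lie{A}{b})/D\). Substituting, the terms multiplying \(\det(b,\Lie{A}{b},H)\) cancel exactly because of the choice of the coefficient \(D'\) in \(Z_s\); this is the reason singular arcs preserve \(D''\). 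What remains is \(\beta D\,\det(b,A,H)\), so the cofactor is \(\det(b,\Lie{A}{b},\Lie{A}{H})+D\,\dvg H\), with the sign fixed by orientation. The identity is polynomial, so it extends from \(\{D\neq0\}\) to all of \(\R^3\) by density. The main obstacle is this bookkeeping: the signs and the cancellation must come out exactly as stated. I would verify it first on the generic basis expansion before simplifying.

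For the consequences, homogeneity gives the rest. \(Z_s\) has degree \(9\) and \(D''\) has degree \(5\). The tangential projection is \(Y_s=Z_s-\pair{Z_s}{v}v\), and Euler's identity gives \(\cL_vD''=5D''\). Hence on \(\Sph^2\) we get \(\cL_{Y_s}D''=(\cK-5\pair{Z_s}{v})D''=kD''\), and \(\Sigma_S\) is invariant. For a periodic orbit \(\Gamma\subset\Sigma_S\) with \(\nabla D''\neq0\), the function \(D''\) is a transverse coordinate. Linearizing the identity along \(\Gamma\), the transverse variation \(\xi=dD''(\delta v)\) satisfies \(\dot\xi=k\xi\), so the multiplier is \(\exp\int_\Gamma k\dd\sigma\); on the two-dimensional sphere this is the nontrivial Floquet multiplier.

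If the multiplier is \(\neq1\), \(\Gamma\) is a hyperbolic limit cycle, attracting on one side and in one time direction. Nearby orbits lie off \(\Sigma_S\), so they are normal singular arcs. The sign of \(D''\) on either side, equivalently the Legendre–Clebsch sign, separates them into the hyperbolic and elliptic types. Persistence under small perturbations of \((H,b)\) follows because the construction is continuous and hyperbolic periodic orbits persist. The perturbed orbit stays inside the perturbed \(\Sigma_S\) because the Darboux identity holds for every \((H,b)\), and a regular compact oval of the quintic deforms continuously.
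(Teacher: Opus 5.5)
Your proposal is correct in substance. The Darboux step is essentially the paper's; the dynamical consequences follow a different route.

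\textbf{Darboux identity.} Your derivative-of-a-determinant plus trace identity is exactly the paper's formula for \(\cL_F\Omega(Y_1,Y_2,Y_3)\). The only difference is the closing step. You expand \(W=\Lie{A}{H}\) by Cramer's rule in the basis \((b,A,C)\), \(C=\Lie{A}{b}\), on \(\{D\neq0\}\), and then extend by density. The paper instead uses the Pl{\"u}cker relation for \(\omega(X,Y)=\det(b,X,Y)\) on \(\R^3/\R b\). Cramer makes visible why the coefficient \(D'\) cancels the \(\det(b,C,H)\) terms. Pl{\"u}cker is division-free and holds on \(\{D=0\}\) at once; your density step tacitly assumes \(D\not\equiv0\).

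\textbf{Multiplier.} You linearize: along \(\Gamma\), \(\xi=dD''(\delta v)\) solves \(\dot\xi=k\xi\), so the monodromy has eigenvalues \(1\) and \(e^{\lambda_\Gamma}\). The paper instead uses the exact identity \(z(P(x))=z(x)E(x)\), \(E>0\), for the return map in the coordinate \(z=D''|_\Pi\). This gives \(P'(0)=e^{\lambda_\Gamma}\), and also shows directly that \(P\) preserves each side of \(\Gamma\) and contracts (or expands) uniformly there. That is exactly the accumulation of both elliptic and hyperbolic arcs, with no appeal to general hyperbolic theory.

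\textbf{Persistence.} You invoke structural stability of hyperbolic cycles, which requires \(\lambda_\Gamma\neq0\). The paper builds the perturbed component by the implicit function theorem, gets invariance from the perturbed Darboux identity and periodicity from \(\wt Y_s\neq0\). So regular periodic abnormal components persist even when \(\lambda_\Gamma=0\). Only then does it prove \(\lambda_{\wt\Gamma}\to\lambda_\Gamma\) via \(\lambda=\int k/\abs{Y_s}\,ds\).

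\textbf{Points to tighten.}
\begin{enumerate}
\item The undetermined sign in your \(\cL_HD''\) is a minus, since \(DA\,H=DH\,A-W\). After the cancellation what remains is \(-\beta D\,D''\), not \(\beta D\,D''\), and \(-\beta D=-\det(b,W,C)=\det(b,C,W)\) gives the stated cofactor. The sign is fixed by the bracket convention, not by an orientation choice.
\item \(Z_s\) has degree six, not nine; this is harmless.
\item A hyperbolic cycle attracts or repels from both sides, not from one. This is what yields both types of arcs.
\item Calling nearby \(Y_s\)-orbits normal singular arcs, and matching the two sides with \(DD''>0\) and \(DD''<0\), requires \(D\neq0\) on \(\Gamma\). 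This is part of the paper's hypothesis (\Cref{def:regularcomponent}) and must also be checked on the perturbed component.
\item Placing the hyperbolic continuation \(\wt\Gamma'\) inside \(\wt\Sigma_S\) needs one more line. Either show that the deformed oval carries no zero of \(\wt Y_s\) and use local uniqueness of the continuation. Or integrate \(\frac{d}{d\sigma}\wt D''=\wt k\,\wt D''\) over one period of \(\wt\Gamma'\): this gives \(\wt D''=\wt D''e^{\int\wt k}\) with \(\int\wt k\) close to \(\lambda_\Gamma\neq0\), which forces \(\wt D''=0\) on \(\wt\Gamma'\).
\end{enumerate}
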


Since \(D\) is odd, a component of \(\Sigma_S\) on which the singular
feedback is defined cannot be antipodally symmetric. For a nonsingular
quintic this forces such components to be lifts of \emph{ovals}: there are
at most twelve of them, in antipodal pairs (\Cref{prop:antipodal}).

For a compact invariant set \(K\) of a singular flow \(Y\) and a running cost
\(\ell\in C(K)\), we define the critical value
\(c=\min\set{\int\ell\dd\mu:\ \mu \text{ invariant on } K}\), the Mather
set, the critical subsolutions \(\chi\in C^1\) of
\(\pair{d\chi}{Y}\le\ell-c\), and the Aubry set as the set of points whose
orbit is calibrated by every critical subsolution
(\Cref{def:AMstructure}).

\begin{theoremB}[Singular Aubry--Mather structure, \Cref{thm:duality,thm:structure,prop:covariance}]
The critical value satisfies the duality formula
\(c=\sup_{\chi\in C^1}\min_K(\ell-\pair{d\chi}{Y})\) and equals the limit of
the minimal Birkhoff averages of \(\ell\). If a critical subsolution exists,
there is one which is strict exactly outside the Aubry set; the Aubry set is
compact, invariant, contains the Mather set, and an invariant measure is a
Mather measure if and only if it is supported in the Aubry set. The
structure is covariant under reparametrizations of \(Y\).
\end{theoremB}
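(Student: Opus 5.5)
The plan is to treat $K$ as a compact metric space carrying the continuous flow $\phi_t$ of $Y$. The set $\cM_Y$ of $\phi$-invariant Borel probability measures on $K$ is then nonempty (Krylov--Bogolyubov), convex and weak-$*$ compact, so the minimum defining $c$ is attained. Throughout, $C^1$ functions are understood as restrictions to $K$ of $C^1$ functions on an ambient neighbourhood; along orbits $\pair{d\chi}{Y}$ is the derivative $\frac{d}{dt}\chi(\phi_t x)$.

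\emph{Duality.} For any $\chi\in C^1$ and $\mu\in\cM_Y$, invariance gives $\int\pair{d\chi}{Y}\dd\mu=0$. Hence
\[
\min_K(\ell-\pair{d\chi}{Y})\le\int\ell\dd\mu ,
\]
and so $\sup\le c$. For the reverse inequality I would use Hahn--Banach in $C(K)$. Let $E=\set{\pair{d\chi}{Y}:\chi\in C^1}$, a linear subspace. Suppose $\sup_\chi\min_K(\ell-\pair{d\chi}{Y})<c-\varepsilon$. Then the convex set $\ell-c+\varepsilon-\ov{E}$ contains no function that is $\ge0$ everywhere. Since the positive cone of $C(K)$ has nonempty interior, separation gives a positive functional $\mu\ne0$ that vanishes on $E$ and satisfies $\int(\ell-c+\varepsilon)\dd\mu\le0$. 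Normalize $\mu$ to a probability measure. It annihilates $Y\chi$ for every $\chi\in C^1$, so it is invariant: apply this to $\chi\circ\phi_t$ after smoothing, or note that $\frac{d}{dt}\int\chi\circ\phi_t\dd\mu=\int Y(\chi\circ\phi_t)\dd\mu$. The only technical point is that $\chi\circ\phi_t$ is $C^1$ when $Y$ is $C^1$. We then get $\int\ell\dd\mu\le c-\varepsilon$, a contradiction.

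\emph{Birkhoff averages.} Set $m_T=\min_x\frac1T\int_0^T\ell(\phi_tx)\dd t$. Subadditivity, via the concatenation $m_{T+S}\ge(Tm_T+Sm_S)/(T+S)$, gives existence of the limit. The upper bound $\lim m_T\le c$ follows by integrating the averages against a minimizing $\mu$. The lower bound $\lim m_T\ge c$ follows by taking weak-$*$ limits of the empirical measures along minimizing orbit segments, which are asymptotically invariant.

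\emph{Strict subsolution and the Aubry set.} A critical subsolution $\chi$ satisfies
\[
\chi(\phi_Tx)-\chi(x)\le\int_0^T(\ell-c)\circ\phi_t\dd t ,
\]
and $x$ is calibrated if equality holds for all $T\in\R$. I would take a countable family $\chi_n$ dense in the (convex) set of critical subsolutions, in the $C^1$ sense on $K$, and define $\chi=\sum2^{-n}\chi_n/(1+\norm{\chi_n}_{C^1})$ rescaled into a convex combination; this is again a subsolution. Where $\chi$ is non-strict, every $\chi_n$ is non-strict, since convex combinations of inequalities give equality only if each term is equal. Calibration along an orbit is equivalent to non-strictness at every point of the orbit, because the integrand $\ell-c-\pair{d\chi}{Y}\ge0$ vanishes identically along it. So the non-strict set of $\chi$ contains the Aubry set. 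Conversely, any point calibrated by all $\chi_n$ is calibrated by every subsolution, by density. Hence $\chi$ is strict outside the Aubry set $\cA$, non-strict on $\cA$, and
\[
\cA=\bigcap_{t}\phi_t\bigl(\set{\ell-c-\pair{d\chi}{Y}=0}\bigr),
\]
which is compact and invariant. For $\mu\in\cM_Y$ we have $\int(\ell-c-\pair{d\chi}{Y})\dd\mu=\int\ell\dd\mu-c$, with a nonnegative continuous integrand. So $\mu$ is Mather iff the integrand vanishes $\mu$-a.e., iff $\supp\mu$ lies in the zero set; by invariance, iff $\supp\mu\subset\cA$. Taking unions over Mather measures yields that the Mather set is contained in $\cA$.

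\emph{Covariance.} Let $\tilde Y=\rho Y$ with $\rho>0$ continuous, and set $\tilde\ell=\rho\ell$. Invariant measures then correspond through $\tilde\mu=\rho^{-1}\mu/\int\rho^{-1}\dd\mu$. Because the zero set of $\tilde\ell-\pair{d\chi}{\tilde Y}$ is the same up to the level shift, the structure for $(\tilde Y,\tilde\ell)$ is obtained by replacing $c$ with the appropriate $\tilde c$; the natural normalization is $\tilde\ell=\rho(\ell-c)$, giving $\tilde c=0$. The same subsolutions, Aubry set and supports of Mather measures then carry over directly.

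I expect the main obstacle to be the density and separability argument for the strict subsolution, since convergence in $C^1$ on a possibly fractal $K$ must be handled in ambient terms. If that fails, I would instead use, for each $x\notin\cA$, a subsolution that is strict near the orbit of $x$, take a finite subcover of compact pieces of $K\setminus\cA$, and combine countably many of these.
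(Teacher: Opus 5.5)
Your duality argument (Hahn--Banach against the positive cone, then the infinitesimal invariance criterion) follows \Cref{thm:duality}, and so does your Birkhoff-average argument (superadditivity plus empirical measures). Your covariance computation with the normalized cost $\rho(\ell-c)$ is \Cref{prop:covariance} read in the opposite direction. Your Aubry-set bookkeeping is also correct as far as it goes. For the convex combination $\chi_*$ of a dense sequence you do get $\cZ(\chi_*)=\bigcap_{\chi\in\cS}\cZ(\chi)$ and $\wt\cA_s=\bigcap_t\Phi_t(\cZ(\chi_*))$. Since supports of invariant measures are invariant, this already gives compactness and invariance of $\wt\cA_s$, the inclusion $\wt\cM_s\subset\wt\cA_s$, and the characterization of Mather measures.

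The gap is the claim that $\chi_*$ is strict outside $\wt\cA_s$. That is the inclusion $\cZ(\chi_*)\subset\wt\cA_s$, i.e.\ the invariance of $\cZ(\chi_*)$, and your ``Hence'' does not give it. The condition $g_{\chi_*}(x)=0$ only tells you that $g_\chi(x)=0$ for every $\chi\in\cS$ \emph{at the point} $x$. Membership $x\in\wt\cA_s$ requires $g_\chi=0$ along the whole orbit of $x$.

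This is a genuine statement, not a formality. The contact set of a single subsolution is in general not invariant; it can meet a non-calibrated orbit in isolated points. So you must prove that the \emph{intersection} of all contact sets is invariant. The paper does this by transporting slack back along an orbit:
\begin{itemize}
\item Suppose $v\in\cZ(\chi_*)$ but $g_{\chi_*}(\Phi_{t_0}v)>0$. Then $Y(v)\ne0$, and you can work in flow-box coordinates $(s,y)$ around the arc from $v$ to $\Phi_{t_0}v$, with $Y=\partial_s$.
\item Set $\chi'=\chi_*+\varepsilon\beta(y)h(s)$. Here $h'=\theta_1-\theta_0$ for bumps $\theta_0,\theta_1\ge0$ of equal integral concentrated near $s=0$ and $s=t_0$, and $\beta$ is a bump with $\beta(0)=1$.
\item Then $g_{\chi'}=g_{\chi_*}+\varepsilon\beta\theta_0-\varepsilon\beta\theta_1$. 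This stays $\ge0$ on $K$ for small $\varepsilon$, because $g_{\chi_*}$ has a positive lower bound on $K$ near $\Phi_{t_0}v$.
\item At the same time $g_{\chi'}(v)=\varepsilon\theta_0(0)>0$, which contradicts $v\in\bigcap_{\chi\in\cS}\cZ(\chi)$.
\end{itemize}

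Your fallback (for each $x\notin\wt\cA_s$, a subsolution strict near its orbit, followed by a covering) presupposes exactly this lemma. What you need is strictness at $x$ itself, not at some point of its orbit. The obstacle you anticipated, $C^1$ density on a possibly fractal $K$, is harmless: subsolutions are elements of the separable space $C^1(M)$, and your density argument goes through as written.
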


\begin{theoremC}[Periodic abnormal components, \Cref{thm:periodic,thm:selection,prop:transverse,prop:heteroclinic}]
On a periodic abnormal component \(\Gamma\) the critical value is the
average of \(\ell\), the corrector is explicit, and
\(\wt\cA_s=\wt\cM_s=\Gamma\). On a finite union of periodic components and
equilibria, the Aubry and Mather sets coincide with the union of the
components of minimal average cost. The transverse stability of the
abnormal locus near any compact invariant set \(K\subset\Sigma_S\) is governed
by the singular Mather problem with the algebraic cost \(k\): if every
invariant measure on \(K\) gives \(k\) a negative average, the abnormal locus
attracts uniformly near \(K\). On heteroclinic cycles a critical
subsolution exists only if the total normalized action of the cycle is
nonnegative; when it vanishes the cycle is contained in the Aubry set while
the Mather set reduces to equilibria.
\end{theoremC}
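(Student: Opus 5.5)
We prove Theorem C by splitting it into its four assertions and treating each as a specialization of Theorem B to an invariant set $K\subset\Sigma_S$ with simple dynamics.

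\textbf{Periodic component.} Let $\Gamma$ be a periodic orbit of $Y$ with period $T$. Its only invariant probability measure is the normalized time measure $\mu_\Gamma$. Hence $c=\frac1T\int_0^T\ell(\gamma(t))\dd t$ and $\wt\cM_s=\Gamma$.

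For the corrector, parametrize $\Gamma$ by time, $\gamma:\R/T\mathbb{Z}\to\Gamma$, and set
\[
\chi(\gamma(t))=\int_0^t\bigl(\ell(\gamma(s))-c\bigr)\dd s .
\]
This is well defined because the integrand has zero mean over a period, and it satisfies $\pair{d\chi}{Y}=\ell-c$ with equality on $\Gamma$. To obtain a $C^1$ function in the sense of \Cref{def:AMstructure}, extend $\chi$ off $\Gamma$ by any $C^1$ extension; only the values on $K=\Gamma$ enter. Since $\wt\cM_s\subset\wt\cA_s\subset\Gamma$ by Theorem B, we get $\wt\cA_s=\wt\cM_s=\Gamma$.

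\textbf{Finite union of components.} Let $K=\bigcup_i K_i$, where each $K_i$ is a periodic orbit or an equilibrium, with averages $c_i$. Every ergodic measure on $K$ is some $\mu_{K_i}$, so $c=\min_i c_i$. Let $I$ be the set of indices with $c_i=c$.

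Build the subsolution componentwise. On $K_i$ take the periodic corrector for the cost $\ell-c$, namely $\chi_i(\gamma(t))=\int_0^t(\ell-c_i)\dd s$. This gives $\pair{d\chi_i}{Y}=\ell-c_i\le \ell-c$, with strict inequality exactly when $i\notin I$. The components are disjoint compact sets, so a bump-function partition glues the $\chi_i$ into a single $\chi\in C^1$.

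The resulting $\chi$ is strict off $\bigcup_{i\in I}K_i$, so $\wt\cA_s\subset\bigcup_{i\in I}K_i$. The reverse inclusion holds because each $\mu_{K_i}$ with $i\in I$ is a Mather measure and the Aubry set contains the Mather set.

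\textbf{Transverse stability.} Use \Cref{thm:multiplier}: off the regular set, $\frac{d}{dt}\log|D''|$ along $Y_s$ equals $k$ up to the normalization of the homogeneous degree. Apply the duality formula of \Cref{thm:duality} with cost $k$. If every invariant measure on $K$ has $\int k\dd\mu<0$, then by compactness of the set of invariant measures $c<0$, and duality yields $\chi$ with $\pair{d\chi}{Y}\le k-c/2<0$ on $K$, hence on a neighbourhood $U$ of $K$ by continuity.

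Now consider the Lyapunov-type function $W=\log|D''|-\chi$. On $U\setminus\Sigma_S$ it satisfies $\dot W\le c/2<0$, so $|D''|$ decays exponentially along orbits that stay in $U$. This gives uniform attraction. The main obstacle is making this rigorous in the regime $|D''|\to0$, since the identity $\cL_{Z_s}D''=\cK D''$ only gives $\dot{(\log|D''|)}=k$ exactly on the sphere. I would first check the spherical projection formula for $k=\cK-5\pair{Z_s}{v}$, which is Euler's identity for the degree-$5$ polynomial.

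\textbf{Heteroclinic cycle.} Let the cycle consist of equilibria $p_j$ joined by orbits $\gamma_j$. Integrate the subsolution inequality along each $\gamma_j$ from $-\infty$ to $+\infty$; the normalized action $\int(\ell-c)\dd t$ converges if $\ell(p_j)=c$. Since $\chi$ is continuous at the $p_j$, the sum of $\chi(p_{j+1})-\chi(p_j)$ telescopes to $0$, which forces the total action to be nonnegative.

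When the total action vanishes, every inequality along the cycle is an equality for every subsolution. Hence the whole cycle is calibrated and lies in $\wt\cA_s$. Meanwhile the invariant measures on the cycle are combinations of Dirac masses at the equilibria, so $\wt\cM_s$ reduces to the equilibria.

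The delicate point here is convergence of the action integrals. It requires $\ell(p_j)=c$ at every vertex and hyperbolic, exponential approach to the equilibria, which I would take from \Cref{prop:heteroclinic}'s setting.
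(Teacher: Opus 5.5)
Your arguments for a single periodic component, for finite unions of periodic orbits and equilibria, and for heteroclinic cycles are the paper's: an explicit corrector extended to a $C^1$ function; correctors glued with cutoffs so that $g_\chi=\ov\ell_{\Gamma_i}-c$ on $\Gamma_i$; integration of the subsolution inequality along the connections, then telescoping.

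\textbf{The gap: transverse stability.} There the duality is used in the wrong direction. The hypothesis concerns $\lambda^+(K)=\max_\mu\int k\dd\mu=-c(K,Y_s,-k)$, which is the Mather problem for the cost $-k$. The critical value of the cost $k$ is instead $\min_\mu\int k\dd\mu$. Applying \Cref{thm:duality}(i) to $k$ only produces $\chi$ with $k-d\chi(Y_s)\ge c-\varepsilon$ on $K$, which is a lower bound. Indeed, the inequality you write, $\pair{d\chi}{Y}\le k-c/2$, gives $\dot W=k-d\chi(Y_s)\ge c/2$, which yields no decay. The further claim $k-c/2<0$ is also false pointwise, since $k$ may be positive somewhere on $K$.

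\textbf{The repair.} Apply \Cref{thm:duality}(i) to $-k$. It gives $\chi\in C^1(\Sph^2)$ with $-k-d\chi(Y_s)\ge-\lambda^+(K)/2$ on $K$. Then $\psi:=-\chi$ satisfies $k-d\psi(Y_s)\le\lambda^+(K)/2$ on $K$, hence $k-d\psi(Y_s)\le\lambda^+(K)/4$ on an open neighbourhood $U$ of $K$. Integrating $\frac{d}{d\sigma}\bigl(\log\abs{D''}-\psi\bigr)\le\lambda^+(K)/4$ along orbit segments in $U\setminus\Sigma_S$ gives $\abs{D''(\Phi_\sigma v)}\le e^{\osc\psi}e^{\lambda^+(K)\sigma/4}\abs{D''(v)}$.

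The ``main obstacle'' you anticipate is not there. The identity you need is \Cref{cor:sphereinvariance}, not \Cref{thm:multiplier}, and it already gives $\cL_{Y_s}D''=kD''$ on all of $\Sph^2$. So $\frac{d}{d\sigma}\log\abs{D''}=k$ exactly along every orbit off $\Sigma_S$. The set $\set{D''\neq0}$ is invariant, so $\abs{D''}$ never reaches zero in finite time, and orbits in $\Sigma_S$ satisfy the estimate trivially.

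\textbf{Comparison with the paper.} Once corrected, your Lyapunov-function argument is a legitimate alternative to the paper's, which uses \Cref{thm:duality}(ii) instead. The paper picks $T$ with $\int_0^Tk(\Phi_sv)\dd s<-aT$ on a neighbourhood of $K$ and applies the integrated Darboux identity. This gives a contraction by $e^{-aT}$ per step of length $T$, with no prefactor. Your version gives a continuous-time bound with the constant $e^{\osc\psi}$. It also shows that $\log\abs{D''}+\chi$, with $\chi$ an approximate critical subsolution of $(K,Y_s,-k)$, is an adapted Lyapunov function.

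\textbf{Minor points in the heteroclinic part.} First, $\ell(e_j)=c$ is necessary but not sufficient for the convergence of the $I_j$. Exponential approach to the equilibria is not sufficient either when $\ell$ is merely continuous, which is why the paper takes convergence as a hypothesis. Second, reducing the Mather set to the equilibria requires $K=\cC$. It also requires showing that invariant measures do not charge the non-recurrent orbits; the paper does this with the disjoint arcs $\eta_j([n,n+1))$ and invariance.
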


\begin{theoremD}[Radial interpretation, \Cref{prop:lift,cor:radial}]
For the cost \(\rho=\pair{X_s(w)}{w}\) and the spherical clock \(\tau\), the
critical corrector of a periodic abnormal component is the periodic part of
\(\log\abs{v}\) along the abnormal cone, and the critical value
\(\ov\rho_\Gamma\) decides between convergence to the origin
(\(\ov\rho_\Gamma<0\)) and blow-up in finite time (\(\ov\rho_\Gamma>0\)) of the
abnormal trajectories; the oscillation of the corrector gives an explicit
radius below which these trajectories satisfy \(\abs{u}\le 1\) for all times.
\end{theoremD}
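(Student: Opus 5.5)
The plan is to work along the abnormal cone \(C_\Gamma=\set{r w:\ r>0,\ w\in\Gamma}\) in polar coordinates \(v=r w\), \(r=\abs{v}\), \(w\in\Sph^2\), and to exploit the homogeneity of \(X_s\). Since \(H\) is cubic and \(D,D'\) are homogeneous of degrees \(3,6\), the quotient \(D'/D\) has degree \(3\). Hence \(X_s\) is homogeneous of degree \(3\), and \(Z_s=DX_s\) has degree \(6\).

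\textbf{Step 1: the polar splitting (the content of \Cref{prop:lift}).} Writing \(\dot v=X_s(v)\) in polar form gives
\[
\dot r=r^3\rho(w),\qquad \dot w=r^2\bigl(X_s(w)-\rho(w)w\bigr),\qquad \rho(w)=\pair{X_s(w)}{w}.
\]
The spherical clock \(\dd\tau=r^2\dd t\) decouples the system into \(w'=Y(w)\) on the sphere and \(\tfrac{\dd}{\dd\tau}\log r=\rho(w)\). Here \(Y\) is the tangential part of \(X_s\), which is a positive reparametrization of \(Y_s\) wherever \(D\neq0\). By the invariance of \(\Sigma_S\) in \Cref{thm:darboux}, the cone \(C_\Gamma\) is invariant.

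\textbf{Step 2: critical value and corrector.} Let \(T\) be the \(\tau\)-period of \(\Gamma\). By \Cref{thm:periodic} and the covariance in \Cref{prop:covariance}, the critical value for the cost \(\rho\) is \(\ov\rho_\Gamma=T^{-1}\int_0^T\rho(w(\tau))\dd\tau\). The corrector is
\[
\chi(w(\tau))=\int_0^\tau(\rho-\ov\rho_\Gamma)\,\dd s,
\]
which is well defined and periodic because \(\rho-\ov\rho_\Gamma\) has zero mean. It then follows that
\[
\log r(\tau)=\log r(0)-\chi(w(0))+\ov\rho_\Gamma\tau+\chi(w(\tau)).
\]
This identifies \(\chi\) as the periodic part of \(\log\abs v\), up to an additive constant, and this is the content of \Cref{cor:radial}.

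\textbf{Step 3: dichotomy.} Bounding the corrector term by \(\osc\chi\) gives
\[
r_0e^{\ov\rho_\Gamma\tau-\osc\chi}\le r(\tau)\le r_0e^{\ov\rho_\Gamma\tau+\osc\chi}.
\]
To return to physical time, use \(t=\int r^{-2}\dd\tau\).

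\emph{Case \(\ov\rho_\Gamma<0\).} Then \(r\) decays exponentially in \(\tau\). The integral \(\int r^{-2}\dd\tau\) diverges, so \(\tau\to\infty\) corresponds to \(t\to\infty\), and \(r\to0\).

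\emph{Case \(\ov\rho_\Gamma>0\).} Then \(\int_0^\infty r^{-2}\dd\tau<\infty\), so \(\tau=\infty\) is reached at a finite time \(t^*\), at which \(r\) blows up. Combining the two-sided bound with \(\dd t=r^{-2}\dd\tau\) gives the estimate
\[
t^*\le e^{2\osc\chi}\big/\bigl(2\ov\rho_\Gamma r_0^2\bigr),
\]
together with a matching lower bound.

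\textbf{Step 4: admissibility.} The singular control \(u=-D'/D\) has degree \(3\), so \(\abs{u(v)}=r^3\abs{(D'/D)(w)}\le r^3M\), where \(M=\max_\Gamma\abs{D'/D}\). The maximum \(M\) is finite because \(D\neq0\) on \(\Gamma\). Take the case \(\ov\rho_\Gamma\le0\). The upper bound in Step 3 gives \(r(\tau)\le r_0e^{\osc\chi}\) for all \(\tau\ge0\). Therefore \(\abs u\le1\) for all times as soon as
\[
r_0<M^{-1/3}e^{-\osc\chi},
\]
and this is the explicit radius in the statement.

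\textbf{Main obstacle.} I expect the delicate point to be the bookkeeping between the three time scales: the clock of \(Y_s=\)(tangential part of \(Z_s\)), the clock of \(X_s\), and \(\tau\). The corrector and the critical value must be the ones for \(X_s\)'s tangential part with the \(\tau\)-clock, since the cost \(\rho\) is a density with respect to \(\tau\). I would settle this by invoking \Cref{prop:covariance} explicitly, to transport \(\ell\dd t\) correctly between the clocks. I would also check that \(D\) has a constant sign on \(\Gamma\), which holds by connectedness since \(D\neq0\) there, so that the reparametrization preserves orientation.
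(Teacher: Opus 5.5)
Your proposal is correct and follows essentially the same route as the paper (\Cref{prop:lift} and \Cref{cor:radial}): the polar splitting with \(\dd\tau=r^2\dd t\), the corrector \(\int_0^\tau(\rho-\ov\rho_\Gamma)\dd s\) from \Cref{thm:periodic}, the two-sided bound on \(\log r\) by the oscillation, the conversion \(t=\int r^{-2}\dd\tau\), and the identity \(\abs{u_s}=r^3\abs{D'/D}(w)\) for admissibility. One small correction: since \(\widehat X_s=Y_s/D\), this reparametrization reverses orientation where \(D<0\) (for instance on \(-\Gamma\) in the example), contrary to your remark, but this is harmless because \Cref{thm:periodic} applies directly to \(\widehat X_s\) on \(\Sph^2\cap\set{D\neq0}\), so neither \(Y_s\) nor \Cref{prop:covariance} is needed.
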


Finally, \Cref{sec:numerics} exhibits an explicit cubic field with a
periodic abnormal component and computes all the objects above. Since the
conditions defining a regular periodic abnormal component are open
(\Cref{prop:persistence}), the example, which is computed numerically,
indicates that such components occur on an open set of cubic fields.

\subsection*{Organization}

\Cref{sec:singularreduction} contains the singular reduction, the Darboux
structure, the antipodal symmetry, the stability of periodic abnormal
components and the radial dynamics. \Cref{sec:AMstructures} introduces
singular Aubry--Mather structures for a general singular flow and proves
Theorem~B. \Cref{sec:periodic} applies them to the abnormal locus: periodic
components and their competition, transverse stability, and heteroclinic
cycles. \Cref{sec:numerics} contains the numerical example, and
\Cref{sec:perspectives} discusses perspectives.

% =================================================================
\section{The singular reduction of the cubic homogeneous model}\label{sec:singularreduction}
% =================================================================

\subsection{Singular extremals and the Darboux structure}

For vector fields \(F,G\) on \(\R^3\) we use the convention
\[
\Lie{F}{G}(v)=DG(v)F(v)-DF(v)G(v),
\]
so that \(\Lie{F}{G}=\cL_FG\) is the Lie derivative of \(G\) along \(F\); the
Lie derivative of a function \(g\) is \(\cL_Fg=dg(F)\). We set
\begin{equation}\label{eq:ACW}
A:=\Lie{b}{H}=DH\,b,\qquad
C:=\Lie{A}{b}=-DA\,b,\qquad
W:=\Lie{A}{H},
\end{equation}
and define the determinantal semi-invariants
\begin{equation}\label{eq:D}
D:=\det(b,A,C),\qquad
D':=\det(b,A,W),\qquad
D'':=\det(b,A,H).
\end{equation}
Since \(H\) is homogeneous of degree three, \(A\), \(C\) and \(W\) are
homogeneous of degrees \(2\), \(1\) and \(4\); hence \(D\), \(D'\) and
\(D''\) are homogeneous polynomials of degrees \(3\), \(6\) and \(5\).

\begin{remark}[Bracket convention]\label{rem:convention}
With the opposite convention for the Lie bracket, \(D\), \(D'\), \(D''\) are
all multiplied by \(-1\). The singular feedback \(D'/D\), the product
\(DD''\), the zero set of \(D''\) and the cost \(\abs{D}\) are therefore
independent of the convention, while the polynomial field \(Z_s\) below
changes sign.
\end{remark}

Let \(\cH(v,p,u)=\pair{p}{H(v)+ub}\). An extremal of the time-minimal problem
is \emph{singular} on an interval if the switching function \(\pair{p}{b}\)
vanishes identically there, and \emph{abnormal} if, in addition, the
maximized Hamiltonian vanishes.

\begin{lemma}[Singular extremals]\label{lem:singular}
Let \((v,p,u)\) be a singular extremal of \eqref{eq:system} on an interval
\(I\) on which \(D(v(t))\neq 0\). Then there is a function
\(\lambda\neq 0\) on \(I\) such that \(p(t)=\lambda(t)\, b\times A(v(t))\),
\begin{equation}\label{eq:us}
u(t)=u_s(v(t)),\qquad u_s:=-\frac{D'}{D},
\end{equation}
and the Hamiltonian is \(\cH=\lambda D''(v)\), constant along the extremal.
In particular the extremal is abnormal if and only if \(v(t)\in\set{D''=0}\)
for \(t\in I\).
\end{lemma}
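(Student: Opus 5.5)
The plan is to differentiate the switching function $\phi(t)=\pair{p}{b}$ repeatedly along the extremal, using the adjoint equation $\dot p=-DH(v)^{T}p-u\,Db^{T}p=-DH(v)^{T}p$ (since $b$ is constant). For any vector field $F$ one has
\[
\frac{d}{dt}\pair{p}{F(v)}=\pair{p}{DF\,(H+ub)-DH\,F}=\pair{p}{\Lie{H+ub}{F}}
\]
(with the paper's convention $\Lie{F}{G}=DG\,F-DF\,G$, this is $\pair{p}{\Lie{H}{F}}+u\pair{p}{\Lie{b}{F}}$, up to sign bookkeeping I will fix carefully). Then $\phi\equiv0$ gives $\dot\phi=\pm\pair{p}{\Lie{H}{b}}=\mp\pair{p}{A}\equiv0$. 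Differentiating once more gives
\[
\pair{p}{\Lie{A}{H}}+u\,\pair{p}{\Lie{A}{b}}=\pair{p}{W}+u\pair{p}{C}\equiv0 ,
\]
again up to an overall sign.

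Next I would determine $p$. Since $p\neq0$ (a Pontryagin extremal has nonzero adjoint) and $p\perp b$, $p\perp A(v)$, the vectors $b$ and $A(v)$ must be independent. This holds because $D=\det(b,A,C)\neq0$. Hence $p$ is a nonzero multiple of $b\times A(v)$, which gives $p=\lambda\,b\times A$ with $\lambda(t)\neq0$. Continuity of $\lambda$ follows from continuity of $p$ and of $b\times A(v(t))\neq0$.

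Substituting, and using $\pair{b\times A}{X}=\det(b,A,X)$, the second-derivative relation becomes $\lambda(D'+uD)=0$. Since $\lambda\neq0$ and $D\neq0$, this gives $u=-D'/D=u_s(v)$.

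Finally, $\cH=\pair{p}{H+ub}=\lambda\det(b,A,H)+u\lambda\det(b,A,b)=\lambda D''(v)$. Along any extremal of an autonomous system the maximized Hamiltonian is constant. On a singular arc, $\cH$ evaluated at the actual control coincides with the maximized value, because the switching function vanishes and $\cH$ does not depend on $u$ there. Abnormality means $\cH=0$, and since $\lambda\neq0$ this is equivalent to $D''(v(t))=0$ on $I$.

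The only delicate step is the sign bookkeeping in the derivative formula. With the stated convention, $\frac{d}{dt}\pair{p}{F}=\pair{p}{\Lie{H}{F}}+u\pair{p}{\Lie{b}{F}}$, which yields $\pair{p}{\Lie{H}{b}}=-\pair{p}{A}$ and then $-\pair{p}{\Lie{H}{A}}-u\pair{p}{\Lie{b}{A}}=\pair{p}{W}+u\pair{p}{C}$ (because $\Lie{H}{A}=-W$ and $\Lie{b}{A}=-C$). The signs therefore cancel consistently, and the result matches the definitions of $D'$ and $D$.

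A second point is regularity. Differentiating $\phi$ twice requires $u$ only measurable. However, $\dot\phi$ is absolutely continuous, so the identity $\pair{p}{W}+u\pair{p}{C}=0$ holds almost everywhere. Hence $u=u_s(v)$ a.e., which is the meaning of the statement.
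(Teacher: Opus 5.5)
Your proof is correct and follows the paper's argument exactly. You differentiate the switching function twice via $\frac{d}{dt}\pair{p}{F(v)}=\pair{p}{\Lie{H+ub}{F}(v)}$, use $D\neq0$ to get $p=\lambda\, b\times A$, and then read off $D'+uD=0$ and $\cH=\lambda D''$. Your additional justifications fill in points the paper leaves implicit: $\lambda\neq0$ because the adjoint vector never vanishes, $\cH$ is constant because it equals the maximized Hamiltonian on a singular arc, and $u=u_s(v)$ holds a.e.\ for measurable controls.
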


\begin{proof}
Along an extremal, \(\frac{d}{dt}\pair{p}{F(v)}=\pair{p}{\Lie{H+ub}{F}(v)}\)
for every vector field \(F\). Differentiating \(\pair{p}{b}\equiv 0\) gives
\(\pair{p}{\Lie{H}{b}}=-\pair{p}{A}\equiv 0\), and differentiating once more
gives \(\pair{p}{\Lie{H}{A}}+u\pair{p}{\Lie{b}{A}}=-\pair{p}{W}-u\pair{p}{C}=0\).
Since \(D\neq0\), the vectors \(b\) and \(A\) are independent, so \(p\) is
proportional to \(b\times A\) and \(\pair{p}{X}=\lambda\det(b,A,X)\) for every
\(X\). The relation above becomes \(D'+uD=0\), and
\(\cH=\pair{p}{H}+u\pair{p}{b}=\lambda D''\).
\end{proof}

On \(\set{D\neq0}\) the singular arcs are therefore the trajectories of the
\emph{singular vector field}
\begin{equation}\label{eq:Xs}
X_s:=H+u_s\,b=H-\frac{D'}{D}\,b,
\end{equation}
which is homogeneous of degree three, and the abnormal ones are the
trajectories of \(X_s\) contained in the cone \(\set{D''=0}\). A singular arc
is admissible only where \(\abs{u_s}\le1\); since \(u_s\) is homogeneous of
degree three, this is a condition on the distance to the origin (see
\Cref{prop:lift}). In the terminology of \cite{BK93,BC03}, a normal singular
arc is \emph{hyperbolic} or \emph{elliptic} according to the sign of
\(\pair{p}{\Lie{\Lie{b}{H}}{b}}\pair{p}{H}=\lambda^2DD''\), and abnormal arcs
are the \emph{exceptional} ones.
% NOTE (for the authors): please double check the sign convention for the
% hyperbolic/elliptic dichotomy against [BK93]; with the convention fixed
% here the relevant quantity is lambda^2 D D''.

The polynomial representative of the singular dynamics is
\begin{equation}\label{eq:Zs}
Z_s:=D\,H-D'\,b=D\,X_s,
\end{equation}
homogeneous of degree six; \(X_s\) and \(Z_s\) have the same trajectories on
\(\set{D\neq0}\), with orientation reversed where \(D<0\).

\begin{theorem}\label{thm:darboux}
The polynomial \(D''\) is a Darboux polynomial of \(Z_s\):
\begin{equation}\label{eq:darboux}
\cL_{Z_s}D''=\cK\,D'',\qquad
\cK:=\det(b,C,W)+D\,\dvg H,
\end{equation}
where \(\cK\) is a homogeneous polynomial of degree five.
\end{theorem}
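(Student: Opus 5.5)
The plan is to compute \(\cL_{Z_s}D''\) by the Leibniz rule for the determinant, then reduce the result to \(\cK\,D''\) with two algebraic identities: the trace identity for the derivative of a determinant, and a Plücker relation in the plane \(\R^3/\R b\). Since both sides of \eqref{eq:darboux} are polynomials, no assumption \(D\neq 0\) is needed.

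\emph{Step 1 (differentiation).} Since \(b\) is constant,
\[
\cL_{Z_s}D''=\det\bigl(b,DA\,Z_s,H\bigr)+\det\bigl(b,A,DH\,Z_s\bigr).
\]
We insert \(Z_s=DH-D'b\) and use \(DA\,b=-C\) and \(DH\,b=A\) from \eqref{eq:ACW}. This gives \(DA\,Z_s=D\,DA\,H+D'C\) and \(DH\,Z_s=D\,DH\,H-D'A\). The term \(\det(b,A,A)\) vanishes, so
\[
\cL_{Z_s}D''=D\bigl[\det(b,DA\,H,H)+\det(b,A,DH\,H)\bigr]+D'\det(b,C,H).
\]

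\emph{Step 2 (divergence).} We write \(DA\,H=DH\,A-W\), which is the definition \(W=\Lie{A}{H}\) in the paper's convention. The bracket then becomes
\[
\det(b,DH\,A,H)+\det(b,A,DH\,H)-\det(b,W,H).
\]
Next we apply the identity \(\det(Mx,y,z)+\det(x,My,z)+\det(x,y,Mz)=\operatorname{tr}(M)\det(x,y,z)\) with \(M=DH\) and \((x,y,z)=(b,A,H)\). Its extra term is \(\det(DH\,b,A,H)=\det(A,A,H)=0\). Hence the first two determinants sum to \(\dvg H\,D''\), and
\[
\cL_{Z_s}D''=D\,\dvg H\,D''-D\det(b,W,H)+D'\det(b,C,H).
\]

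\emph{Step 3 (Plücker relation).} It remains to prove
\[
-D\det(b,W,H)+D'\det(b,C,H)=\det(b,C,W)\,D''.
\]
The bilinear form \((x,y)\mapsto\det(b,x,y)\) depends only on the classes of \(x,y\) in \(\R^3/\R b\), where it is the \(2\times2\) determinant \([\cdot,\cdot]\). Write \(a,c,w,h\) for the classes of \(A,C,W,H\). With \(D=[a,c]\), \(D'=[a,w]\) and \(D''=[a,h]\), the identity to prove is the classical three-term relation
\[
[a,c][w,h]-[a,w][c,h]+[a,h][c,w]=0
\]
for four vectors in the plane. Rearranged, it gives exactly the required formula, and hence \(\cK=\det(b,C,W)+D\,\dvg H\).

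\emph{Degree count.} \(\det(b,C,W)\) has degree \(1+4=5\), and \(D\,\dvg H\) has degree \(3+2=5\), so \(\cK\) is homogeneous of degree five.

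The only delicate point is Step 3: recognizing that the leftover terms form a Plücker relation, and keeping the signs straight under the bracket convention, in particular \(DA\,H=DH\,A-W\). This is where I would check most carefully.
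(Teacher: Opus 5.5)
Your proposal is correct and follows essentially the same route as the paper. Both reach the same intermediate identity \eqref{eq:LZD}: your Jacobian chain rule together with the trace identity is the coordinate form of the paper's Lie-derivative formula \eqref{eq:Liedet} (via \(\cL_F\Omega=(\dvg F)\,\Omega\)), which the paper applies to \(F=H\) and \(F=b\) separately. Both then conclude with the same Pl{\"u}cker relation in \(\R^3/\R b\), and your signs, including \(DA\,H=DH\,A-W\), are right.
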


\begin{proof}
Let \(\Omega=\det\) be the standard volume form, so that
\(\cL_F\Omega=(\dvg F)\,\Omega\). For vector fields \(F,Y_1,Y_2,Y_3\),
\begin{equation}\label{eq:Liedet}
\begin{aligned}
\cL_F\bigl(\Omega(Y_1,Y_2,Y_3)\bigr)
={}&(\dvg F)\,\Omega(Y_1,Y_2,Y_3)
+\Omega(\Lie{F}{Y_1},Y_2,Y_3)\\
&+\Omega(Y_1,\Lie{F}{Y_2},Y_3)
+\Omega(Y_1,Y_2,\Lie{F}{Y_3}).
\end{aligned}
\end{equation}
Apply \eqref{eq:Liedet} to \(D''=\Omega(b,A,H)\). For \(F=H\), using
\(\Lie{H}{b}=-A\), \(\Lie{H}{A}=-W\) and \(\Lie{H}{H}=0\), we obtain
\[
\cL_HD''=(\dvg H)D''+\det(-A,A,H)-\det(b,W,H)=(\dvg H)D''-\det(b,W,H).
\]
For \(F=b\), using \(\dvg b=0\), \(\Lie{b}{b}=0\), \(\Lie{b}{A}=-C\) and
\(\Lie{b}{H}=A\),
\[
\cL_bD''=-\det(b,C,H)+\det(b,A,A)=-\det(b,C,H).
\]
Since \(\cL_{Z_s}=D\,\cL_H-D'\,\cL_b\) on functions,
\begin{equation}\label{eq:LZD}
\cL_{Z_s}D''=D(\dvg H)D''-\det(b,A,C)\det(b,W,H)+\det(b,A,W)\det(b,C,H).
\end{equation}
The bilinear form \(\omega(X,Y):=\det(b,X,Y)\) is alternating and vanishes
when one argument is proportional to \(b\); it therefore descends to an
area form on the two-dimensional quotient \(\R^3/\R b\). For four vectors of a
two-dimensional space one has the Pl{\"u}cker relation
\[
\omega(X_1,X_2)\omega(X_3,X_4)-\omega(X_1,X_3)\omega(X_2,X_4)
+\omega(X_1,X_4)\omega(X_2,X_3)=0 .
\]
With \((X_1,X_2,X_3,X_4)=(A,C,W,H)\) this gives
\[
\det(b,A,C)\det(b,W,H)-\det(b,A,W)\det(b,C,H)=-D''\det(b,C,W),
\]
and \eqref{eq:LZD} becomes \eqref{eq:darboux}. The degrees of
\(\det(b,C,W)\) and of \(D\dvg H\) are \(0+1+4=5\) and \(3+2=5\).
\end{proof}

\begin{remark}
The invariance of the abnormal cone \(\set{D''=0}\) under the singular flow
is classical \cite{BK93,BC03}; \Cref{thm:darboux} makes it a polynomial
identity (in the language of planar polynomial vector fields, \(\set{D''=0}\)
is an invariant algebraic surface with cofactor \(\cK\), see \cite{DLA06}),
valid also at the points where \(D=0\) or where \(\set{D''=0}\) is
singular, and the explicit cofactor will compute the stability of periodic
abnormal components (\Cref{thm:multiplier}). We have also checked
\eqref{eq:darboux} by symbolic computation on random cubic fields.
\end{remark}

We now pass to the unit sphere. Let \(\Sph^2=\set{v\in\R^3:\ \abs{v}=1}\). We set
\begin{equation}\label{eq:spherefield}
Y_s(v):=Z_s(v)-\kappa(v)\,v,\qquad
\kappa(v):=\pair{Z_s(v)}{v},\qquad v\in\Sph^2,
\end{equation}
and call \(Y_s\) the \emph{polynomial spherical singular field}; its time is
denoted by \(\sigma\). The \emph{spherical singular quintic} is
\[
\Sigma_S:=\set{v\in\Sph^2:\ D''(v)=0}.
\]
A point \(v\in\Sigma_S\) is \emph{regular} if the differential of
\(D''|_{\Sph^2}\) does not vanish at \(v\). By Euler's identity
\(dD''(v)[v]=5D''(v)\), this is equivalent to \(\nabla D''(v)\neq0\).

\begin{corollary}\label{cor:sphereinvariance}
On \(\Sph^2\), we have
\begin{equation}\label{eq:spheredarboux}
\cL_{Y_s}D''=k\,D'',\qquad k:=\cK-5\kappa .
\end{equation}
Consequently, along every trajectory \(\gamma\) of \(Y_s\),
\(D''(\gamma(\sigma))=D''(\gamma(0))\exp\int_0^\sigma k(\gamma(s))\dd s\). In
particular \(\Sigma_S\), \(\set{D''>0}\) and \(\set{D''<0}\) are invariant
under the flow of \(Y_s\), and so is the regular part of \(\Sigma_S\).
\end{corollary}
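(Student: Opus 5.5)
The plan is to compute \(\cL_{Y_s}D''\) directly from the definition \eqref{eq:spherefield}, using \Cref{thm:darboux} for the \(Z_s\)-part and Euler's identity for the radial part. We regard \(Y_s\) as the restriction to \(\Sph^2\) of the polynomial field \(\wt Y(v)=Z_s(v)-\pair{Z_s(v)}{v}\,v\) on \(\R^3\). Since \(\pair{\wt Y(v)}{v}=\kappa(v)(1-\abs{v}^2)\) vanishes on \(\Sph^2\), \(\wt Y\) is tangent to the sphere, so the Lie derivative of \(D''|_{\Sph^2}\) along \(Y_s\) equals \(dD''(v)[\wt Y(v)]\) for \(v\in\Sph^2\). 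By linearity,
\[
dD''(v)[\wt Y(v)]=\cL_{Z_s}D''(v)-\kappa(v)\,dD''(v)[v]=\cK(v)D''(v)-5\kappa(v)D''(v),
\]
where we used \eqref{eq:darboux} and Euler's identity \(dD''(v)[v]=5D''(v)\), valid because \(D''\) is homogeneous of degree five. This gives \eqref{eq:spheredarboux} with \(k=\cK-5\kappa\).

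Next we integrate along a trajectory. The field \(Y_s\) is polynomial on a compact manifold, so its flow is complete. Along \(\gamma\), the function \(f(\sigma)=D''(\gamma(\sigma))\) satisfies the scalar linear ODE \(f'=k(\gamma(\sigma))f\) with continuous coefficient. By uniqueness, or by differentiating \(f(\sigma)\exp(-\int_0^\sigma k)\), we get the stated formula. In particular the sign of \(D''\) is preserved: zero stays zero and a positive (negative) value stays positive (negative). Hence \(\Sigma_S\), \(\set{D''>0}\) and \(\set{D''<0}\) are invariant, for both positive and negative times.

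The only step needing care is the invariance of the \emph{regular part} of \(\Sigma_S\), that is, showing the flow cannot carry a regular point to a singular one. We will use the flow \(\phi_\sigma\) of \(Y_s\), which is a diffeomorphism of \(\Sph^2\). Set \(g=D''|_{\Sph^2}\). The integrated identity reads \(g\circ\phi_\sigma=e^{a_\sigma}\,g\) with \(a_\sigma(v)=\int_0^\sigma k(\phi_s(v))\dd s\), which is smooth in \(v\). Differentiate this at a point \(v\in\Sigma_S\), where \(g(v)=0\):
\[
dg(\phi_\sigma v)\circ d\phi_\sigma(v)=e^{a_\sigma(v)}\,dg(v).
\]
The term involving \(da_\sigma\) carries the factor \(g(v)=0\) and drops out. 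Since \(d\phi_\sigma(v)\) is invertible and \(e^{a_\sigma}>0\), \(dg(v)\neq0\) if and only if \(dg(\phi_\sigma v)\neq0\). So regular points are mapped to regular points, and by the same argument for \(\phi_{-\sigma}\) singular points are mapped to singular points. The main obstacle is only the minor point of justifying that the spherical Lie derivative can be computed with the ambient extension \(\wt Y\); this holds because \(\wt Y\) is tangent to \(\Sph^2\) and agrees with \(Y_s\) there.
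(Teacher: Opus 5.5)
Your proposal is correct and follows essentially the same route as the paper: you combine \eqref{eq:darboux} with Euler's identity for the Lie derivative, integrate the scalar linear equation along trajectories, and differentiate \(D''\circ\Phi_\sigma=E_\sigma D''\) at a point of \(\Sigma_S\) to transport regularity. Your extra remarks (tangency of the ambient extension to \(\Sph^2\), completeness of the flow, and the backward-time argument for singular points) only make explicit details that the paper leaves implicit.
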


\begin{proof}
By \eqref{eq:darboux} and Euler's identity,
\(\cL_{Y_s}D''=dD''(Z_s)-\kappa\, dD''(v)[v]=\cK D''-5\kappa D''\). The
integrated form follows by solving the linear equation
\(\frac{d}{d\sigma}D''(\gamma)=k(\gamma)D''(\gamma)\). Finally, if
\(\Phi_\sigma\) denotes the flow of \(Y_s\), then
\(D''\circ\Phi_\sigma=E_\sigma D''\) with \(E_\sigma>0\) smooth;
differentiating at a point \(v\in\Sigma_S\) gives
\(dD''(\Phi_\sigma v)\circ d\Phi_\sigma(v)=E_\sigma(v)\,dD''(v)\), so \(\Phi_\sigma\)
maps regular points of \(\Sigma_S\) to regular points.
\end{proof}

On \(\set{D\neq0}\) we also use the tangential projection of the singular
field,
\begin{equation}\label{eq:Xsphere}
\widehat X_s(v):=X_s(v)-\rho(v)\,v,\qquad \rho(v):=\pair{X_s(v)}{v},
\end{equation}
whose time is denoted by \(\tau\). Since \(Z_s=DX_s\),
\begin{equation}\label{eq:YDX}
Y_s=D\,\widehat X_s,\qquad \kappa=D\rho\qquad\text{on }\Sph^2\cap\set{D\neq0}.
\end{equation}
Hence, along a common trajectory, \(d\tau=D\dd\sigma\): the two clocks differ
by the density \(\abs{D}\), and the orientations agree exactly where \(D>0\).
The clock \(\tau\) is obtained from the physical time \(t\) through the
homogeneity, \(d\tau=\abs{v}^2dt\) (see \Cref{prop:lift}).

\subsection{Periodic abnormal components: symmetry, persistence and stability}

Let \(\mathsf a(v)=-v\) be the antipodal map.

\begin{proposition}\label{prop:antipodal}
\begin{enumerate}[label=\textnormal{(\roman*)}]
\item \(D\) and \(D''\) are odd, \(D'\) and \(Z_s\) are even, \(X_s\) is odd.
Consequently \(Y_s(-v)=Y_s(v)\) and \(\widehat X_s(-v)=-\widehat X_s(v)\),
that is, \(\mathsf a_*Y_s=-Y_s\) and \(\mathsf a_*\widehat X_s=\widehat X_s\).
If \(\gamma\) is a trajectory of \(Y_s\), so is \(\sigma\mapsto-\gamma(-\sigma)\).
In particular \(\widehat X_s\) descends to a vector field on
\(\RP^2\setminus\pi(\set{D=0})\), whereas, away from its zeros, \(Y_s\) only
descends to a line field.
\item If \(\Gamma\subset\Sph^2\) is connected and \(D\neq0\) on \(\Gamma\), then
\(\Gamma\cap(-\Gamma)=\emptyset\).
\item Assume that the complex projective curve \(\set{D''=0}\subset\CP^2\) is
nonsingular. Then every point of \(\Sigma_S\) is regular, and the image of
\(\Sigma_S\) in \(\RP^2\) consists of exactly one one-sided component \(J\)
and of at most six ovals. The preimage of \(J\) is a connected, antipodally
symmetric curve containing at least one zero of \(D\); every connected
component of \(\Sigma_S\) on which \(D\neq0\) is one of the two lifts of an
oval. In particular \(\Sigma_S\) contains at most twelve components on which
\(D\neq0\), and they occur in antipodal pairs.
\end{enumerate}
\end{proposition}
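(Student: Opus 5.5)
For (i), the plan is to track parities under \(v\mapsto -v\). \(H\) is cubic, hence odd. Its Jacobian \(DH\) is quadratic, hence even, so \(A=DH\,b\) is even. Then \(DA\) is linear, hence odd, so \(C=-DA\,b\) is odd. For \(W=\Lie{A}{H}=DH\,A-DA\,H\), each term is a product of homogeneous factors of total degree four, so \(W\) is even. Consequently:
\begin{itemize}[label=\(\cdot\)]
\item \(D=\det(b,A,C)\) has parity even\(\cdot\)odd, so it is odd;
\item \(D'=\det(b,A,W)\) is even;
\item \(D''=\det(b,A,H)\) is odd.
\end{itemize}
These also follow directly from the degrees \(3,6,5\). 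Next, \(Z_s=DH-D'b\) is odd\(\cdot\)odd minus even, so it is even, and \(X_s=Z_s/D\) is odd. Since \(\kappa(v)=\pair{Z_s(v)}{v}\) is odd, \(Y_s(-v)=Z_s(v)-\kappa(-v)(-v)=Z_s(v)-\kappa(v)v=Y_s(v)\). Similarly \(\rho\) is even and \(\widehat X_s\) is odd. As vector fields, \((\mathsf a_*F)(v)=-F(-v)\), which gives \(\mathsf a_*Y_s=-Y_s\) and \(\mathsf a_*\widehat X_s=\widehat X_s\). For the curve \(\eta(\sigma)=-\gamma(-\sigma)\) one computes \(\dot\eta(\sigma)=\dot\gamma(-\sigma)=Y_s(\gamma(-\sigma))=Y_s(-\eta(\sigma))=Y_s(\eta(\sigma))\), so \(\eta\) is again a trajectory. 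The descent statements follow: \(\widehat X_s\) is \(\mathsf a\)-invariant, while \(Y_s\) is invariant only up to sign, so it defines only a line field.

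For (ii), suppose \(\Gamma\) is connected, \(D\ne0\) on \(\Gamma\), and \(v\in\Gamma\cap(-\Gamma)\). Then \(-v\in\Gamma\) as well. Because \(D\) is odd, \(D(v)\) and \(D(-v)\) have opposite signs. But \(D\) has constant sign on the connected set \(\Gamma\), by the intermediate value theorem on connected sets. This is a contradiction.

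For (iii), the plan is to work on the projective curve. If the complex curve \(\{D''=0\}\) is nonsingular, then \(\nabla D''\) has no common complex zeros with \(D''\) away from \(0\). In particular \(\nabla D''(v)\ne0\) on \(\Sigma_S\), which is regularity by the remark after the definition. The real part of a smooth plane curve of degree \(5\) is a disjoint union of smooth circles in \(\RP^2\). Since the degree is odd, exactly one of them is one-sided (a pseudo-line \(J\)); the others are ovals. Harnack's theorem bounds the number of components by \(g+1\) with \(g=(5-1)(5-2)/2=6\). Hence there are at most \(7\) components, that is, at most six ovals. Under the double cover \(\Sph^2\to\RP^2\), a one-sided curve has connected, antipodally symmetric preimage, while an oval, being null-homotopic, lifts to two disjoint antipodal circles.

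The lift \(\tilde J\) of \(J\) is connected and satisfies \(\tilde J=-\tilde J\). By (ii), \(D\) cannot be nonvanishing on it, so \(D\) has a zero on \(\tilde J\). Now take any connected component of \(\Sigma_S\) on which \(D\ne0\). It is a component of the preimage of some component of the real curve, and it cannot be \(\tilde J\). Therefore it is one of the two lifts of an oval. This gives at most \(2\cdot6=12\) such components. They occur in pairs because \(D''\) is odd, so \(\Sigma_S\) is antipodally invariant, and \(D(-v)=-D(v)\neq 0\), so the antipode of a component with \(D\ne0\) is another one.

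The main subtle step is the topological input: existence of exactly one pseudo-line for odd degree, and Harnack's bound. Both are standard facts from real algebraic geometry that I would cite rather than reprove.
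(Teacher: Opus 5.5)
Your proposal is correct and follows essentially the same route as the paper: parities from the homogeneity degrees and the push-forward formula for (i), constancy of the sign of \(D\) on connected sets for (ii), and for (iii) Harnack's bound combined with the lifting of one-sided curves versus ovals under the double cover. The only real difference is that you cite the fact that an odd-degree curve has exactly one one-sided component, whereas the paper proves it: a transversal real line meets the quintic in an odd number of points, so the \(\mathbb Z/2\)-class of the real curve is nonzero; ovals have zero class; and two one-sided components would have to intersect.
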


\begin{proof}
Recall that a homogeneous polynomial \(f\) of degree \(k\) satisfies
\(f(-v)=(-1)^kf(v)\), and that the same identity holds componentwise for a
vector field whose components are homogeneous polynomials of degree \(k\).
Since \(D\), \(D'\) and \(D''\) are homogeneous of degrees \(3\), \(6\) and
\(5\), the polynomials \(D\) and \(D''\) are odd and \(D'\) is even, while
\(H\) is odd, being cubic, and the constant field \(b\) is even. Therefore
\(Z_s=DH-D'b\) is even, both products \(DH\) and \(D'b\) being even, and on
the open cone \(\set{D\neq0}\) the quotient \(D'/D\) is odd, so that
\(X_s=H-(D'/D)\,b\) is odd. Note that \(\set{D=0}\) and \(\set{D''=0}\) are
invariant under \(\mathsf a\), so that \(\Sph^2\cap\set{D\neq0}\) and
\(\Sigma_S\) are \(\mathsf a\)-invariant subsets of the sphere. For
\(v\in\Sph^2\) we then obtain
\[
Y_s(-v)=Z_s(-v)-\pair{Z_s(-v)}{-v}(-v)=Z_s(v)-\pair{Z_s(v)}{v}\,v=Y_s(v),
\]
and, for \(v\in\Sph^2\cap\set{D\neq0}\),
\[
\widehat X_s(-v)=X_s(-v)-\pair{X_s(-v)}{-v}(-v)
=-X_s(v)+\pair{X_s(v)}{v}\,v=-\widehat X_s(v).
\]
The map \(\mathsf a\) is the restriction to \(\Sph^2\) of the linear map
\(-\mathrm{Id}\), so that \(d\mathsf a_v\xi=-\xi\) for every
\(\xi\in T_v\Sph^2\), and the push-forward of a vector field \(X\) is given by
\((\mathsf a_*X)(-v)=d\mathsf a_v\,X(v)=-X(v)\). Combined with the two
identities above, this gives
\((\mathsf a_*Y_s)(-v)=-Y_s(v)=-Y_s(-v)\) and
\((\mathsf a_*\widehat X_s)(-v)=-\widehat X_s(v)=\widehat X_s(-v)\), that is,
\(\mathsf a_*Y_s=-Y_s\) and \(\mathsf a_*\widehat X_s=\widehat X_s\). If
\(\gamma\) is a trajectory of \(Y_s\) and \(\tilde\gamma(\sigma):=-\gamma(-\sigma)\),
then
\[
\tilde\gamma'(\sigma)=\gamma'(-\sigma)=Y_s\bigl(\gamma(-\sigma)\bigr)
=Y_s\bigl(-\gamma(-\sigma)\bigr)=Y_s\bigl(\tilde\gamma(\sigma)\bigr),
\]
so that \(\tilde\gamma\) is again a trajectory of \(Y_s\); in the same way,
\(-\eta\) is a trajectory of \(\widehat X_s\) whenever \(\eta\) is. Let now
\(\pi:\Sph^2\to\RP^2\) be the quotient map, which is a two-sheeted covering
whose nontrivial deck transformation is \(\mathsf a\). A vector field defined
on an \(\mathsf a\)-invariant open subset of \(\Sph^2\) is the lift of a vector
field on its image under \(\pi\) if and only if it is \(\mathsf a\)-invariant.
This applies to \(\widehat X_s\) on \(\Sph^2\cap\set{D\neq0}\), which
therefore defines a vector field on \(\RP^2\setminus\pi(\set{D=0})\). On the
contrary, a vector field which is both \(\mathsf a\)-invariant and
\(\mathsf a\)-anti-invariant vanishes identically, so \(Y_s\), which satisfies
\(\mathsf a_*Y_s=-Y_s\), is not the lift of a vector field unless
\(Y_s\equiv0\). Nevertheless \(d\mathsf a_v\) maps the line \(\R Y_s(v)\) onto
the line \(\R Y_s(-v)\), so that, on the complement of the zeros of \(Y_s\),
the line field \(v\mapsto\R Y_s(v)\) is \(\mathsf a\)-invariant and descends to
\(\RP^2\).

We now prove the second assertion. Let \(\Gamma\subset\Sph^2\) be connected
with \(D\neq0\) on \(\Gamma\). Since \(D\) is continuous, \(D(\Gamma)\) is a
connected subset of \(\R\setminus\set{0}\), so that \(D\) has constant sign on
\(\Gamma\). If there were a point \(w\in\Gamma\cap(-\Gamma)\), then both \(w\)
and \(-w\) would belong to \(\Gamma\), and \(D(-w)=-D(w)\) would have the sign
opposite to that of \(D(w)\), a contradiction. Hence
\(\Gamma\cap(-\Gamma)=\emptyset\).

We finally turn to the third assertion. Denote by \(C_{\mathbb C}\subset\CP^2\)
the complex curve \(\set{D''=0}\), which has degree five, and by
\(C=C_{\mathbb C}\cap\RP^2\) its real locus, so that \(\Sigma_S=\pi^{-1}(C)\).
A nonsingular plane curve is irreducible, since two distinct components would
meet by B{\'e}zout's theorem and every common point would be singular. A real
point of \(C\) is a nonsingular point of \(C_{\mathbb C}\), and the gradient of
the real polynomial \(D''\) at a real point is a real vector; it is therefore
nonzero at every \(v\in\Sigma_S\). By Euler's identity
\(dD''(v)[v]=5D''(v)=0\) for \(v\in\Sigma_S\), so that \(\nabla D''(v)\) is
tangent to the sphere and the differential of \(D''|_{\Sph^2}\) does not
vanish at \(v\). Every point of \(\Sigma_S\) is thus regular, \(\Sigma_S\) is
a compact one-dimensional submanifold of \(\Sph^2\), that is, a finite
disjoint union of embedded circles, and the same holds for \(C=\pi(\Sigma_S)\),
since \(\pi\) is a local diffeomorphism. The genus of the nonsingular quintic
\(C_{\mathbb C}\) is \(g=(5-1)(5-2)/2=6\), and Harnack's theorem
\cite{Harnack,Wilson} shows that \(C\) has at most \(g+1=7\) connected
components. Each component \(K\) of \(C\) is an embedded circle in \(\RP^2\),
and either it is two-sided and bounds a disc, in which case it is called an
oval and its class in \(H_1(\RP^2;\mathbb Z/2)\cong\mathbb Z/2\) vanishes, or it
is one-sided and its class is the nonzero element \cite{Wilson}. To compute
the class of \(C\), let \(L\) be a real projective line meeting \(C\)
transversally. The real points of \(L\cap C\) are the real roots of a real
polynomial of degree five, all simple by transversality, and the nonreal
roots come in conjugate pairs; hence \(L\) meets \(C\) in an odd number of
points, the intersection number of \([C]\) with the generator \([L]\) is
\(1\) in \(\mathbb Z/2\), and \([C]\neq0\). Since \([C]\) is the sum of the
classes of the components of \(C\) and ovals have zero class, \(C\) has an
odd number of one-sided components. Two distinct one-sided components would
have intersection number \(1\) in \(\mathbb Z/2\), hence would meet, which is
impossible for distinct components. Therefore \(C\) has exactly one one-sided
component \(J\), and at most six ovals.

The restriction of \(\pi\) over a component \(K\) of \(C\) is a two-sheeted
covering of a circle, which is connected if and only if the loop \(K\) does
not lift to a closed loop in \(\Sph^2\), that is, if and only if the class of
\(K\) in \(\pi_1(\RP^2)\cong\mathbb Z/2\) is nontrivial. For an embedded circle
this class coincides with its class in \(H_1(\RP^2;\mathbb Z/2)\), and is
nontrivial exactly when \(K\) is one-sided. Consequently \(\pi^{-1}(J)\) is a
single circle, invariant under \(\mathsf a\), while the preimage of each oval
consists of two disjoint circles exchanged by \(\mathsf a\); these circles are
exactly the connected components of \(\Sigma_S\). Since \(\pi^{-1}(J)\) is
connected and contains, together with each of its points \(w\), the point
\(-w\), the second assertion shows that \(D\) vanishes at some point of
\(\pi^{-1}(J)\). It follows that every component of \(\Sigma_S\) on which
\(D\neq0\) is one of the two lifts of an oval, and there are at most
\(2\times6=12\) of them. Finally, if \(\Gamma\) is such a component, then
\(-\Gamma=\mathsf a(\Gamma)\) is again a connected component of \(\Sigma_S\),
because \(\mathsf a\) is a homeomorphism of \(\Sph^2\) preserving
\(\Sigma_S\); moreover \(D\neq0\) on \(-\Gamma\), since \(D(-v)=-D(v)\), and
\(-\Gamma\neq\Gamma\) by the second assertion. The components on which
\(D\neq0\) therefore occur in antipodal pairs.
\end{proof}

\begin{definition}\label{def:regularcomponent}
A \emph{regular periodic abnormal component} is a connected component
\(\Gamma\) of \(\Sigma_S\) such that every point of \(\Gamma\) is regular,
\(D\neq0\) on \(\Gamma\) and \(Y_s\) does not vanish on \(\Gamma\).
\end{definition}

By \Cref{cor:sphereinvariance}, a regular periodic abnormal component is a
compact one-dimensional manifold invariant under \(Y_s\) and free of
equilibria, hence a periodic orbit of \(Y_s\), and of \(\widehat X_s\) by
\eqref{eq:YDX}. By \Cref{prop:antipodal}, \(-\Gamma\) is another regular
periodic abnormal component, disjoint from \(\Gamma\). We denote by
\(T_\Gamma^\sigma\) and \(T_\Gamma^\tau=\int_0^{T^\sigma_\Gamma}\abs{D}\dd\sigma\)
its periods for \(Y_s\) and \(\widehat X_s\).

\begin{proposition}[Persistence]\label{prop:persistence}
Let \(\Gamma\) be a regular periodic abnormal component for \((H,b)\), let
\(\Psi:\R/\mathbb Z\times(-r_0,r_0)\to\Sph^2\) be a tubular parametrization of a
neighbourhood of \(\Gamma\) with \(\Psi(s,0)\in\Gamma\), and set
\(\cZ_r:=\Psi\bigl(\R/\mathbb Z\times(-r,r)\bigr)\). There is \(r_1\in(0,r_0)\)
such that, for every \(r\in(0,r_1]\), there is \(\varepsilon>0\) with the
following property: for every \((\wt H,\wt b)\) whose coefficients are
\(\varepsilon\)-close to those of \((H,b)\), the corresponding spherical quintic
\(\wt\Sigma_S\) has a unique connected component \(\wt\Gamma\) meeting \(\cZ_r\).
Moreover \(\wt\Gamma\subset\cZ_r\), \(\wt\Gamma\) is a regular periodic abnormal
component for \((\wt H,\wt b)\), and \(\wt\Gamma=\Psi\bigl(\set{r=\wt g(s)}\bigr)\)
for a \(C^1\) function \(\wt g\) such that \(\norm{\wt g}_{C^1}\to0\) as
\(\varepsilon\to0\).
\end{proposition}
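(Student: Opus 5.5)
The plan is an implicit function argument for the pulled-back quintic in tubular coordinates, followed by three checks: locality, regularity, and absence of zeros of the singular field. Set \(F(s,r):=D''(\Psi(s,r))\) and, for the perturbed data, \(\wt F(s,r):=\wt D''(\Psi(s,r))\). The polynomial \(\wt D''=\det(\wt b,\Lie{\wt b}{\wt H},\wt H)\) depends polynomially on the coefficients of \((\wt H,\wt b)\). Hence \(\wt F\to F\) in \(C^1\) on the compact set \(\R/\mathbb Z\times[-r_0',r_0']\), for any fixed \(r_0'<r_0\), as \(\varepsilon\to0\). The same holds for \(\wt D\), \(\wt D'\), \(\wt Z_s\) and the tangential field \(\wt Y_s\), uniformly on \(\Sph^2\).

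Since \(\Gamma\) is a component of \(\Sigma_S\) consisting of regular points, we have \(F(s,0)=0\) and the differential of \(D''|_{\Sph^2}\) does not vanish along \(\Gamma\). Its kernel is \(T\Gamma\), so \(\partial_rF(s,0)\neq0\) for all \(s\). By compactness and continuity we choose \(r_1\) with \(\abs{\partial_rF}\ge m>0\) on \(\R/\mathbb Z\times[-r_1,r_1]\). We also require \(\abs{D}\ge m\) and \(\abs{Y_s}\ge m\) on \(\ov{\cZ_{r_1}}\), which is possible because \(D\neq0\) and \(Y_s\neq0\) on \(\Gamma\). Shrinking \(r_1\) further, we ensure that \(\ov{\cZ_{r_1}}\) meets no other component of \(\Sigma_S\); this uses that \(\Sigma_S\) is compact and its components are closed, and that near a regular point of \(\Gamma\) the zero set is locally only \(\Gamma\). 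Reversing \(r\) if necessary, we may assume \(\partial_rF>0\).

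Now fix \(r\le r_1\). Then \(F(s,\pm r)\) has sign \(\pm\) with \(\abs{F(s,\pm r)}\ge mr\). For \(\varepsilon\) small, \(\norm{\wt F-F}_{C^1}<\min(mr,m)/2\). It follows that \(\partial_r\wt F>0\) on \([-r,r]\) and that \(\wt F(s,\pm r)\) keeps the sign \(\pm\). So for each \(s\) there is exactly one zero \(\wt g(s)\in(-r,r)\). By the implicit function theorem \(\wt g\) is \(C^1\), and \(\wt g'=-\partial_s\wt F/\partial_r\wt F\). The uniqueness of the zero in each fibre shows that \(\wt\Sigma_S\cap\cZ_r\) is exactly the graph \(\Psi(\{r=\wt g(s)\})\). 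Since \(\abs{\wt g}\le\norm{\wt F-F}_{C^0}/(m/2)\) and \(\wt g'\) is close to \(-\partial_sF/\partial_rF=0\) along the graph, we get \(\norm{\wt g}_{C^1}\to0\).

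It remains to show that this curve is an entire component of \(\wt\Sigma_S\), that it is the only one meeting \(\cZ_r\), and that it is a regular periodic abnormal component.
1. The graph is a closed embedded circle contained in \(\cZ_r\). It is open in \(\wt\Sigma_S\), because locally \(\wt\Sigma_S\cap\cZ_r\) is the graph, and it is closed, being compact. Hence it is a connected component \(\wt\Gamma\subset\cZ_r\).
2. Any component meeting \(\cZ_r\) contains a point of the graph, so it equals \(\wt\Gamma\). This gives uniqueness.
3. Regularity holds because \(\partial_r\wt F\neq0\) forces the differential of \(\wt D''|_{\Sph^2}\) to be nonzero.
4. \(\wt D\neq0\) and \(\wt Y_s\neq0\) on \(\wt\Gamma\), because \(\abs{\wt D}\ge m/2\) and \(\abs{\wt Y_s}\ge m/2\) on \(\cZ_r\) for small \(\varepsilon\), by uniform convergence.

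The only delicate point is choosing \(r_1\) independently of the perturbation so that the nondegeneracy bounds hold on the whole closed tube; the compactness argument above handles it. Finally, the invariance of \(\wt\Sigma_S\) under \(\wt Y_s\) (\Cref{cor:sphereinvariance}) together with the absence of zeros makes \(\wt\Gamma\) a periodic orbit, as remarked after \Cref{def:regularcomponent}.
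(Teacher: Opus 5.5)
Your proof is correct and follows essentially the same route as the paper: \(C^1\) convergence of \(\wt D''\circ\Psi\) to \(D''\circ\Psi\), strict monotonicity in the normal variable giving a unique zero \(\wt g(s)\) in each fibre, the open--closed argument identifying the graph as a component, and the bounds on \(\wt D\) and \(\wt Y_s\) inherited from the tube. The differences are cosmetic: you localize the zero by the sign change at \(\pm r\), whereas the paper uses \(\pm\delta/m\) inside the larger tube \(\ov{\cZ_{r_1}}\); and your extra shrinking of \(r_1\) to avoid other components of \(\Sigma_S\) is unnecessary, since monotonicity of \(F(s,\cdot)\) already gives \(\Sigma_S\cap\ov{\cZ_{r_1}}=\Gamma\).
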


\begin{proof}
Since every point of \(\Gamma\) is regular, \(\Gamma\) is a compact embedded
smooth curve in \(\Sph^2\), and a tubular parametrization \(\Psi\) as in the
statement exists; we may take \(\Psi(s,r)=\exp_{\gamma(s)}\bigl(r\,\nu(s)\bigr)\),
where \(\gamma\) is a parametrization of \(\Gamma\) by \(\R/\mathbb Z\), \(\nu(s)\)
is a unit normal to \(\Gamma\) in \(T_{\gamma(s)}\Sph^2\), and \(r_0\) is small
enough for \(\Psi\) to be a smooth diffeomorphism onto its image. Let
\(F:=D''\circ\Psi\). Since \(\Gamma\subset\Sigma_S\), we have \(F(s,0)=0\) for
all \(s\), hence \(\partial_sF(s,0)=0\). Since \(d\Psi\) is invertible and the
differential of \(D''|_{\Sph^2}\) does not vanish on \(\Gamma\), while its
restriction to the tangent direction of \(\Gamma\) vanishes, we also have
\(\partial_rF(s,0)\neq0\) for every \(s\). By continuity and compactness of
\(\R/\mathbb Z\), and since the set \(\R/\mathbb Z\times[-r_1,r_1]\) is connected,
we can choose \(r_1\in(0,r_0)\) and \(m>0\) such that, after possibly
replacing \(\nu\) by \(-\nu\), \(\partial_rF\ge2m\) on
\(\R/\mathbb Z\times[-r_1,r_1]\). Since \(D\) and \(Y_s\) do not vanish on the
compact set \(\Gamma\), we may also assume, reducing \(r_1\) if necessary, that
\(\abs{D}\ge2a\) and \(\abs{Y_s}\ge2a\) on \(\ov{\cZ_{r_1}}\) for some \(a>0\).
In particular \(\Sigma_S\cap\ov{\cZ_{r_1}}=\Gamma\), since for each \(s\) the
function \(r\mapsto F(s,r)\) is strictly increasing on \([-r_1,r_1]\) and
vanishes at \(r=0\).

The polynomials \(D\), \(D'\), \(D''\) and the components of \(Z_s\) are
polynomial expressions in the coefficients of \((H,b)\), and they belong to
finite-dimensional spaces of homogeneous polynomials on which all norms are
equivalent. Consequently \(\wt D\to D\), \(\wt D''\to D''\) and
\(\wt Z_s\to Z_s\) in \(C^1(\Sph^2)\) as the coefficients of \((\wt H,\wt b)\)
converge to those of \((H,b)\); the same holds for
\(\wt Y_s=\wt Z_s-\pair{\wt Z_s}{v}v\), and \(\wt F:=\wt D''\circ\Psi\) converges
to \(F\) in \(C^1\bigl(\R/\mathbb Z\times[-r_1,r_1]\bigr)\). Fix
\(r\in(0,r_1]\) and \(\delta\in(0,mr/2)\). There is \(\varepsilon>0\) such that,
whenever the coefficients are \(\varepsilon\)-close, we have
\(\norm{\wt F-F}_{C^1}<\delta\), \(\delta<m\), \(\abs{\wt D}\ge a\) and
\(\abs{\wt Y_s}\ge a\) on \(\ov{\cZ_{r_1}}\). Then
\(\partial_r\wt F\ge2m-\delta\ge m\) on \(\R/\mathbb Z\times[-r_1,r_1]\), so
that for every \(s\) the function \(r'\mapsto\wt F(s,r')\) is strictly
increasing on \([-r_1,r_1]\). Since \(\abs{\wt F(s,0)}=\abs{\wt F(s,0)-F(s,0)}<\delta\),
the mean value theorem gives \(\wt F(s,\delta/m)\ge\wt F(s,0)+\delta>0\) and
\(\wt F(s,-\delta/m)\le\wt F(s,0)-\delta<0\). Hence for every \(s\) there is
exactly one \(\wt g(s)\in[-r_1,r_1]\) with \(\wt F(s,\wt g(s))=0\), and it
satisfies \(\abs{\wt g(s)}<\delta/m<r/2\). As \(\partial_r\wt F\neq0\), the
implicit function theorem shows that \(\wt g\) is of class \(C^1\), with
\[
\wt g'(s)=-\frac{\partial_s\wt F\bigl(s,\wt g(s)\bigr)}{\partial_r\wt F\bigl(s,\wt g(s)\bigr)} .
\]
Since \(\partial_sF(s,0)=0\), we can estimate
\(\abs{\partial_s\wt F(s,\wt g(s))}\le\norm{\wt F-F}_{C^1}
+\abs{\partial_sF(s,\wt g(s))-\partial_sF(s,0)}\), and the right-hand side
tends to zero uniformly in \(s\) as \(\varepsilon\to0\), by the uniform
continuity of \(\partial_sF\) and the bound \(\abs{\wt g}<\delta/m\), where
\(\delta\) may be taken arbitrarily small. Together with
\(\partial_r\wt F\ge m\), this shows that \(\norm{\wt g}_{C^1}\to0\) as
\(\varepsilon\to0\).

We have proved that \(\wt\Sigma_S\cap\ov{\cZ_{r_1}}\) is the embedded circle
\(\wt\Gamma:=\Psi\bigl(\set{r'=\wt g(s)}\bigr)\), which is contained in
\(\cZ_{r/2}\). In particular \(\wt\Gamma=\wt\Sigma_S\cap\cZ_{r_1}\) is open in
\(\wt\Sigma_S\), and it is compact, hence closed; being connected, it is a
connected component of \(\wt\Sigma_S\). Any connected component of
\(\wt\Sigma_S\) meeting \(\cZ_r\) contains a point of \(\wt\Gamma\), and
therefore coincides with \(\wt\Gamma\); this proves uniqueness. Every point of
\(\wt\Gamma\) is regular, because \(\partial_r\wt F\neq0\) there and \(\Psi\) is
a diffeomorphism, so that the differential of \(\wt D''|_{\Sph^2}\) does not
vanish on \(\wt\Gamma\). Moreover \(\abs{\wt D}\ge a\) and \(\abs{\wt Y_s}\ge a\)
on \(\wt\Gamma\subset\ov{\cZ_{r_1}}\).

It remains to show that \(\wt\Gamma\) is a periodic orbit of \(\wt Y_s\).
\Cref{cor:sphereinvariance}, applied to \((\wt H,\wt b)\), shows that
\(\wt\Sigma_S\) is invariant under the flow \(\wt\Phi_\sigma\) of \(\wt Y_s\).
For \(w\in\wt\Gamma\), the set of \(\sigma\in\R\) such that
\(\wt\Phi_\sigma(w)\in\wt\Gamma\) is closed, by compactness of \(\wt\Gamma\), and
open, because \(\wt\Gamma\) is open in \(\wt\Sigma_S\) and
\(\sigma\mapsto\wt\Phi_\sigma(w)\) is a continuous curve in \(\wt\Sigma_S\); as it
contains \(\sigma=0\), it is all of \(\R\), and \(\wt\Gamma\) is invariant. The
restriction of \(\wt Y_s\) to \(\wt\Gamma\) is then a nowhere vanishing vector
field on an embedded circle. The orbit of any point of \(\wt\Gamma\) under the
restricted flow is open in \(\wt\Gamma\), since the flow is a local
diffeomorphism transverse to no direction other than that of the circle,
and its complement is a union of orbits, hence also open; by connectedness
the orbit is the whole circle. A flow on a circle with a single orbit is
periodic, so \(\wt\Gamma\) is a periodic orbit of \(\wt Y_s\), and therefore a
regular periodic abnormal component for \((\wt H,\wt b)\).
\end{proof}

Let \(\Gamma\) be a regular periodic abnormal component and let \(\cZ\) be a
tubular neighbourhood of \(\Gamma\) on which \(D\neq0\) and \(\Gamma\) is the only
zero of \(D''\), as in the proof of \Cref{prop:persistence}. Then
\(\cZ\setminus\Gamma=\cZ_+\cup\cZ_-\), where \(D''\) has constant opposite signs
on the two sides. Through every point of \(\cZ\setminus\Gamma\) passes a unique
singular trajectory, which is normal by \Cref{lem:singular} since
\(D''\neq0\); it is hyperbolic on the side where \(DD''>0\) and elliptic on
the other side. By \Cref{cor:sphereinvariance}, both sides are invariant under
the singular flow: a normal singular arc never crosses the abnormal cone, and
both types of arcs are always present near \(\Gamma\).

\begin{theorem}[Characteristic multiplier]\label{thm:multiplier}
Let \(\gamma\) be a parametrization of \(\Gamma\) by \(Y_s\), with minimal period
\(T^\sigma_\Gamma\), and set
\[
\lambda_\Gamma:=\int_0^{T^\sigma_\Gamma}k(\gamma(\sigma))\dd\sigma,
\qquad k=\cK-5\kappa .
\]
Then \(\lambda_\Gamma\) does not depend on the parametrization, and the Poincar{\'e}
return map of \(Y_s\) on a transversal section to \(\Gamma\) has derivative
\(e^{\lambda_\Gamma}\) at \(\Gamma\). Consequently:
\begin{enumerate}[label=\textnormal{(\roman*)}]
\item if \(\lambda_\Gamma<0\) (resp.\ \(>0\)), \(\Gamma\) is a hyperbolic
attracting (resp.\ repelling) limit cycle of \(Y_s\), and all the normal
singular trajectories starting in a neighbourhood of \(\Gamma\), hyperbolic and
elliptic, accumulate on \(\Gamma\) as \(\sigma\to+\infty\) (resp.\ \(-\infty\));
\item with respect to the spherical time \(\tau\) the multiplier is
\(e^{\operatorname{sgn}(D|_\Gamma)\lambda_\Gamma}\), and
\(\lambda_{-\Gamma}=-\lambda_\Gamma\) with respect to \(\sigma\).
\end{enumerate}
\end{theorem}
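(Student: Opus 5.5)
The plan is to use the Darboux identity of \Cref{cor:sphereinvariance} as a transverse coordinate: near the regular curve \(\Gamma\), the function \(D''|_{\Sph^2}\) is a defining function, and it evolves along trajectories by an explicit exponential factor. Independence of the parametrization is immediate. Any other parametrization of \(\Gamma\) by the flow of \(Y_s\) is a time translate \(\gamma(\cdot+\sigma_0)\), and the integral of the periodic function \(k\circ\gamma\) over a full period does not depend on the starting point.

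For the return map, choose the tubular parametrization \(\Psi\) of \Cref{prop:persistence} and the transversal section \(S=\Psi(\set{s=0}\times(-r_1,r_1))\) through \(p=\gamma(0)\). Use \(F(r)=D''(\Psi(0,r))\) as a coordinate on \(S\); it is legitimate because \(\partial_rF(0,0)\neq0\), so \(F\) is a local diffeomorphism from \(S\) onto a neighbourhood of \(0\). Let \(P\) be the Poincaré map and \(\theta(w)\) the return time, which is continuous with \(\theta(p)=T^\sigma_\Gamma\). By \Cref{cor:sphereinvariance},
\[
D''(P(w))=D''(w)\exp\int_0^{\theta(w)}k(\Phi_s(w))\dd s .
\]
In the coordinate \(x=D''|_S\), the return map therefore reads \(x\mapsto x\,E(x)\), where \(E\) is continuous and
\[
E(0)=\exp\int_0^{T^\sigma_\Gamma}k(\gamma(s))\dd s=e^{\lambda_\Gamma}.
\]
Since \(P\) is \(C^1\), it follows that \(P'(0)=\lim_{x\to0}P(x)/x=e^{\lambda_\Gamma}\). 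The derivative of a one-dimensional return map at a fixed point is invariant under a change of \(C^1\) coordinate on the section, and it does not depend on the choice of section, since return maps on different sections are conjugate. This is the step that needs care: the return time \(\theta\) depends on \(w\), and we need \(E\) to be continuous rather than differentiable, which is exactly what the integrated Darboux formula provides.

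For (i), a multiplier different from \(1\) makes \(\Gamma\) a hyperbolic limit cycle. If \(\lambda_\Gamma<0\), then \(|P(x)|\le q|x|\) with \(q<1\) near \(0\), so \(P^n(x)\to0\). Continuity of the flow over bounded times then shows that every trajectory starting in a small neighbourhood accumulates on \(\Gamma\) as \(\sigma\to+\infty\). Because \(\set{D''>0}\) and \(\set{D''<0}\) are invariant, this applies separately on each side, that is, on the hyperbolic side \(DD''>0\) and on the elliptic side. The case \(\lambda_\Gamma>0\) follows by reversing time.

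For (ii), by \eqref{eq:YDX} we have \(\widehat X_s=Y_s/D\), and \(D\) has constant sign on a neighbourhood of \(\Gamma\). If \(D>0\), the two fields have the same oriented orbits near \(\Gamma\) and the same Poincaré map, so the multiplier is \(e^{\lambda_\Gamma}\). If \(D<0\), the orientation is reversed and the \(\tau\)-return map is \(P^{-1}\), with multiplier \(e^{-\lambda_\Gamma}\).

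For \(-\Gamma\), use \Cref{prop:antipodal}. Since \(D''\) is odd and \(\cK,\kappa\) are even (\(\cK\) has degree five in \(v\), hmm, so we must instead check directly), the cleaner route is to avoid parity of \(k\). By \Cref{prop:antipodal}, \(\tilde\gamma(\sigma)=-\gamma(-\sigma)\) parametrizes \(-\Gamma\) by \(Y_s\) with the same period. The antipodal map conjugates the flow of \(Y_s\) near \(\Gamma\) to the time-reversed flow near \(-\Gamma\). The return map of \(-\Gamma\) on \(-S\) is therefore conjugate to \(P^{-1}\), and its derivative is \(e^{-\lambda_\Gamma}\). Combining this with the first part, \(\lambda_{-\Gamma}=-\lambda_\Gamma\).
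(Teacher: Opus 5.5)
Your treatment of the return map and of (i)--(ii) matches the paper's. You use $D''$ on a transversal arc as a coordinate, write the return map via the integrated Darboux formula as $x\mapsto xE(x)$ with $E$ continuous and $E(0)=e^{\lambda_\Gamma}$, and use $\widehat X_s=Y_s/D$ to get $P$ or $P^{-1}$ for the clock $\tau$. There are two small differences. You justify the coordinate by $\partial_rF\neq0$ on the normal arc of \Cref{prop:persistence}, whereas the paper uses $dD''(Y_s)=kD''=0$ on $\Sigma_S$ for an arbitrary transversal; both work. The step you compress, that orbits starting near $\Gamma$ actually reach the section, is carried out in the paper by a flow-box argument.

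The real difference is the identity $\lambda_{-\Gamma}=-\lambda_\Gamma$. Your parenthetical claim is wrong: $\cK$ and $\kappa$ are homogeneous of degrees five and seven, hence \emph{odd}, so $k$ is odd and there was nothing to avoid. The paper uses exactly this, computing along $\tilde\gamma(\sigma)=-\gamma(-\sigma)$ that $\lambda_{-\Gamma}=\int_0^{T}k(-\gamma(-\sigma))\dd\sigma=-\int_{-T}^0k(\gamma(s))\dd s=-\lambda_\Gamma$.

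Your replacement route is nevertheless correct. From $\mathsf a\circ\Phi_\sigma=\Phi_{-\sigma}\circ\mathsf a$, the return map on $-S$ is $\mathsf a\circ P^{-1}\circ\mathsf a$, with derivative $e^{-\lambda_\Gamma}$. Applying the first part to $-\Gamma$, which is a regular periodic abnormal component as noted after \Cref{def:regularcomponent}, identifies this derivative with $e^{\lambda_{-\Gamma}}$.

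The two routes buy different things. The paper's argument is a one-line identity between integrals and does not need the return-map interpretation. Yours is coordinate-free and uses only $\mathsf a_*Y_s=-Y_s$. It also would have caught your parity slip: an even $k$ would give $\lambda_{-\Gamma}=\lambda_\Gamma$ by the direct computation, contradicting your conjugacy argument unless $\lambda_\Gamma=0$. Delete the erroneous parenthetical.
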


\begin{proof}
Two parametrizations of \(\Gamma\) by \(Y_s\) differ by a translation of time,
and the integral of the periodic function \(k\circ\gamma\) over a period does not
depend on its starting point; hence \(\lambda_\Gamma\) is well defined. Denote by
\(\Phi_\sigma\) the flow of \(Y_s\). Since \(Y_s(\gamma(0))\neq0\), there is a smooth
arc \(\Pi\) through \(\gamma(0)\) transverse to \(Y_s\). On \(\Sigma_S\) we have
\(dD''(Y_s)=kD''=0\) by \Cref{cor:sphereinvariance}, so the differential of
\(D''|_{\Sph^2}\) at \(\gamma(0)\), which is nonzero by regularity, vanishes on
\(Y_s(\gamma(0))\) and is therefore nonzero on the tangent direction of \(\Pi\).
Shrinking \(\Pi\), the restriction \(z:=D''|_\Pi\) is a smooth coordinate on \(\Pi\)
with \(z(\gamma(0))=0\). By the standard construction of the Poincar{\'e} map,
based on the implicit function theorem, there are a neighbourhood
\(\Pi_0\subset\Pi\) of \(\gamma(0)\) and a \(C^1\) function \(T:\Pi_0\to(0,\infty)\)
with \(T(\gamma(0))=T^\sigma_\Gamma\) such that \(P(x):=\Phi_{T(x)}(x)\in\Pi\) for
\(x\in\Pi_0\), and \(P\) is of class \(C^1\), with \(P(\gamma(0))=\gamma(0)\). The
derivative of \(P\) at its fixed point does not depend on the coordinate on
\(\Pi\), since a change of coordinates conjugates \(P\) by a diffeomorphism
fixing \(\gamma(0)\). By \Cref{cor:sphereinvariance},
\[
z(P(x))=D''\bigl(\Phi_{T(x)}(x)\bigr)
=z(x)\,E(x),\qquad
E(x):=\exp\int_0^{T(x)}k(\Phi_s x)\dd s>0 .
\]
The function \(E\) is continuous on \(\Pi_0\), by continuity of \(T\), of the flow
and of \(k\), and \(E(\gamma(0))=e^{\lambda_\Gamma}\). Writing the return map in the
coordinate \(z\) as \(p(z)=z\,E(x(z))\), we obtain \(p(0)=0\) and
\(p'(0)=\lim_{z\to0}p(z)/z=e^{\lambda_\Gamma}\).

Assume \(\lambda_\Gamma<0\), and choose \(q\in(e^{\lambda_\Gamma},1)\). By continuity
of \(E\) there is \(\delta>0\) such that \(E\le q\) for \(\abs{z}\le\delta\), so that
\(\abs{p(z)}\le q\abs{z}\) on this interval; moreover \(p\) preserves the sign of
\(z\), because \(E>0\). Let \(x\in\Pi_0\) with \(\abs{z(x)}\le\delta\), and set
\(x_n:=P^n(x)\) and \(\sigma_n:=\sum_{j<n}T(x_j)\). Then
\(\abs{z(x_n)}\le q^n\abs{z(x)}\), so that \(x_n\to\gamma(0)\), \(T(x_n)\to T^\sigma_\Gamma\)
and \(\sigma_n\to\infty\), while all the \(x_n\) lie on the same side of \(\Gamma\)
as \(x\). By uniform continuity of \((s,y)\mapsto\Phi_s(y)\) on
\([0,T^\sigma_\Gamma+1]\times\Sph^2\), the distance between \(\Phi_s(x_n)\) and
\(\gamma(s)\) tends to zero uniformly in \(s\in[0,T^\sigma_\Gamma+1]\). Since every
\(\sigma\ge\sigma_n\) can be written as \(\sigma=\sigma_m+s\) with \(m\ge n\) and
\(s\in[0,T(x_m)]\), and \(\Phi_\sigma(x)=\Phi_s(x_m)\), the distance from
\(\Phi_\sigma(x)\) to \(\Gamma\) tends to zero as \(\sigma\to\infty\); and every point
\(\gamma(s)\) is the limit of \(\Phi_{\sigma_n+s}(x)=\Phi_s(x_n)\). Thus the
\(\omega\)-limit set of \(x\) is \(\Gamma\). It remains to show that forward orbits
of all points close to \(\Gamma\) reach such an \(x\). By the flow-box theorem
there are \(\eta>0\) and a neighbourhood \(W\) of \(\gamma(0)\) such that every
\(y\in W\) satisfies \(\Phi_s(y)\in\Pi_0\) for some \(s\in(-\eta,\eta)\), with
\(\abs{z(\Phi_s(y))}\le\delta\) if \(W\) is small; we may assume
\(\eta<T^\sigma_\Gamma\). If \(y\) is close enough to \(\Gamma\), it is close to
some \(\gamma(s_0)\) with \(s_0\in[0,T^\sigma_\Gamma)\), and by uniform continuity
of the flow on \([0,2T^\sigma_\Gamma]\times\Sph^2\) the point
\(\Phi_{2T^\sigma_\Gamma-s_0}(y)\) lies in \(W\); its orbit then meets \(\Pi_0\) at
the positive time \(2T^\sigma_\Gamma-s_0+s>0\), at a point \(x\) as above. Hence
every forward orbit starting in a neighbourhood of \(\Gamma\) accumulates on
\(\Gamma\) from the side on which it starts, which proves (i) for hyperbolic and
for elliptic normal singular trajectories. If \(\lambda_\Gamma>0\), the same
argument applied to \(-Y_s\), whose return map is \(P^{-1}\), with multiplier
\(e^{-\lambda_\Gamma}<1\), gives the statement as \(\sigma\to-\infty\).

On \(\cZ\) we have \(\widehat X_s=Y_s/D\) by \eqref{eq:YDX}, and \(D\) has the
constant sign \(\epsilon=\operatorname{sgn}(D|_\Gamma)\). The field
\(\widehat X_s\) is thus a positive time change of \(\epsilon Y_s\): it has the same
oriented orbits as \(\epsilon Y_s\), so its first return map to \(\Pi\) is \(P\) if
\(\epsilon=1\) and \(P^{-1}\) if \(\epsilon=-1\), with multiplier
\(e^{\epsilon\lambda_\Gamma}\). Finally, \(\cK\) and \(\kappa\) are homogeneous of
degrees five and seven, so \(k\) is odd, and by \Cref{prop:antipodal}(i) the
curve \(\tilde\gamma(\sigma)=-\gamma(-\sigma)\) parametrizes \(-\Gamma\) by \(Y_s\),
with the same minimal period. Hence
\[
\lambda_{-\Gamma}=\int_0^{T^\sigma_\Gamma}k\bigl(-\gamma(-\sigma)\bigr)\dd\sigma
=-\int_{-T^\sigma_\Gamma}^0k(\gamma(s))\dd s=-\lambda_\Gamma ,
\]
by periodicity of \(\gamma\).
\end{proof}

\begin{remark}
For a periodic orbit of a \(C^1\) vector field \(Y\) on a surface endowed with
an area form \(\omega\), the multiplier equals \(\exp\int_\Gamma\dvg_\omega Y\dd\sigma\);
in the plane with the Lebesgue measure this is the classical formula, see
\cite{DLA06}, and the general case follows because replacing \(\omega\) by
\(h\omega\), \(h>0\), changes the divergence by \(Y(\log h)\), whose integral along
a closed orbit vanishes. With the area form of \(\Sph^2\),
\Cref{thm:multiplier} therefore yields
\[
\int_\Gamma k\dd\sigma=\int_\Gamma\dvg_{\Sph^2}Y_s\dd\sigma ;
\]
the left-hand side only involves the algebraic data \(D,D',D''\) and the
divergence of \(H\) along \(\Gamma\). The numerical example of
\Cref{sec:numerics} confirms this identity to ten digits. Note also that
\(\lambda_\Gamma/T^\sigma_\Gamma=\int k\dd\mu_\Gamma\) is the average of \(k\) with
respect to the invariant measure of \(\Gamma\); this observation is the
starting point of \Cref{prop:transverse}, where the transverse stability of
general abnormal invariant sets is expressed as a singular Mather problem.
\end{remark}

\begin{corollary}[Robustness]\label{cor:robust}
Let \(\Gamma\) be a regular periodic abnormal component for \((H,b)\) with
\(\lambda_\Gamma\neq0\). Then, for \((\wt H,\wt b)\) close enough to \((H,b)\), the
component \(\wt\Gamma\) of \Cref{prop:persistence} satisfies
\(\operatorname{sgn}\lambda_{\wt\Gamma}=\operatorname{sgn}\lambda_\Gamma\); in
particular it is a hyperbolic limit cycle of the same type.
\end{corollary}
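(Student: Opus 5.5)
The plan is to combine \Cref{prop:persistence} with the continuity of the multiplier integral \(\lambda_\Gamma=\int_0^{T^\sigma_\Gamma}k(\gamma(\sigma))\dd\sigma\) with respect to the data \((H,b)\). By \Cref{prop:persistence}, for \((\wt H,\wt b)\) close to \((H,b)\) there is a regular periodic abnormal component \(\wt\Gamma\subset\cZ_r\). It is the graph \(\set{r=\wt g(s)}\) in the tubular coordinates, with \(\norm{\wt g}_{C^1}\to0\). Hence \(\lambda_{\wt\Gamma}\) is defined by \Cref{thm:multiplier}, and it suffices to show \(\lambda_{\wt\Gamma}\to\lambda_\Gamma\) as \(\varepsilon\to0\). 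The sign then persists because \(\lambda_\Gamma\neq0\). The hyperbolic type follows from \Cref{thm:multiplier}(i): attracting if \(\lambda<0\), repelling if \(\lambda>0\).

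To avoid working with the periods and parametrizations of the flow directly, I would rewrite \(\lambda\) as a line integral over the curve. Parametrize \(\Gamma\) by \(s\mapsto\Psi(s,0)\) and \(\wt\Gamma\) by \(s\mapsto\wt\gamma(s):=\Psi(s,\wt g(s))\). Since \(Y_s\neq0\) on \(\Gamma\) and is tangent to it, the time-\(\sigma\) parametrization gives
\[
\lambda_\Gamma=\int_{\R/\mathbb Z}\frac{k(\gamma(s))\,\abs{\gamma'(s)}}{\abs{Y_s(\gamma(s))}}\dd s .
\]
Here the orientation is consistent because \(Y_s\) never vanishes on the connected curve, so \(d\sigma=\pm\abs{\gamma'}\dd s/\abs{Y_s}\) with a fixed sign. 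If the orientation of \(s\) disagrees with the flow, the formula above carries an overall sign; I fix that sign by choosing the orientation of \(s\) so that \(\pair{Y_s(\gamma(s))}{\gamma'(s)}>0\). The analogous formula holds for \(\wt\Gamma\) with \(\wt k\) and \(\wt Y_s\). I must check that the same choice of orientation also works for \(\wt\Gamma\). This follows because \(\wt\gamma'\to\gamma'\) and \(\wt Y_s\to Y_s\) uniformly, with \(\abs{\wt Y_s}\ge a\) on \(\ov{\cZ_{r_1}}\), so the inner product stays positive.

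Convergence then comes from the uniform convergence of the integrand. The polynomials \(\wt\cK\), \(\wt\kappa\), and hence \(\wt k=\wt\cK-5\wt\kappa\), depend polynomially on the coefficients, as in the proof of \Cref{prop:persistence}. So \(\wt k\to k\) and \(\wt Y_s\to Y_s\) uniformly on \(\ov{\cZ_{r_1}}\). The curves satisfy \(\wt\gamma\to\gamma\) in \(C^1\) because \(\norm{\wt g}_{C^1}\to0\) and \(\Psi\) is smooth. The denominator is bounded below by \(a\). Combining these with uniform continuity of \(k\) and \(Y_s\) on the compact tube, the integrands converge uniformly in \(s\), so \(\lambda_{\wt\Gamma}\to\lambda_\Gamma\).

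The only delicate point is the orientation and sign bookkeeping just described: ensuring that the line-integral formula really reproduces the integral in the \(\sigma\)-time of \Cref{thm:multiplier} for both curves, with the same sign. Everything else is routine continuity.
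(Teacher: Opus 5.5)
Your proposal is correct and is essentially the paper's argument. The paper writes \(\lambda_\Gamma=\int_0^L k/\abs{Y_s}\dd s\) in arc length and then uses \(\norm{\wt g}_{C^1}\to0\), the uniform convergence of \(\wt k\) and \(\wt Y_s\), and a lower bound on \(\abs{Y_s}\), exactly as you do with the tubular parametrization. The orientation bookkeeping you flag as delicate is not actually needed: reversing the orientation of \(s\) changes both the sign of \(d\sigma/ds\) and the order of the limits of integration, so \(\int k\,\abs{\gamma'}/\abs{Y_s}\dd s\) equals \(\lambda_\Gamma\) for either orientation, and your claim that a wrong orientation produces an overall sign is mistaken, though harmless.
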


\begin{proof}
Parametrize \(\Gamma\) by arc length \(s\in[0,L]\). Since \(d\sigma=ds/\abs{Y_s}\)
along the orbit, \(\lambda_\Gamma=\int_0^Lk/\abs{Y_s}\dd s\), and similarly for
\(\wt\Gamma\). By \Cref{prop:persistence}, \(\wt\Gamma\) is the image under the
tubular parametrization of the graph of a function \(\wt g\) with
\(\norm{\wt g}_{C^1}\to0\), so that its arc-length parametrization and its
length converge to those of \(\Gamma\) in \(C^0\), while \(\wt k\to k\) and
\(\wt Y_s\to Y_s\) uniformly on \(\Sph^2\), with \(\abs{Y_s}\) bounded below near
\(\Gamma\). Hence \(\lambda_{\wt\Gamma}\to\lambda_\Gamma\), and the sign is preserved.
\end{proof}

\subsection{Radial dynamics, physical time and admissibility}

The spherical fields only describe the direction of the singular
trajectories. The following proposition reconstructs the
physical time and the radius, and will provide the physical meaning of the
weak KAM objects of \Cref{sec:periodic}.

\begin{proposition}\label{prop:lift}
Let \(w:I\to\Sph^2\cap\set{D\neq0}\) be a trajectory of \(\widehat X_s\), defined
on an interval \(I\ni0\), and let \(r_0>0\). Define \(r(\tau)\) and \(t(\tau)\),
for \(\tau\in I\), by
\begin{equation}\label{eq:radial}
\log r(\tau)=\log r_0+\int_0^\tau\rho(w(s))\dd s,\qquad
t(\tau)=\int_0^\tau r(s)^{-2}\dd s .
\end{equation}
Then \(t\) is an increasing diffeomorphism of \(I\) onto an interval \(t(I)\ni0\),
with inverse \(\tau(t)\), and \(v(t):=r(\tau(t))\,w(\tau(t))\), \(t\in t(I)\), is
the solution of \(\dot v=X_s(v)\) with \(v(0)=r_0w(0)\); moreover
\(\abs{u_s(v(t))}=r^3\abs{D'/D}(w)\). If \(w\) runs along a regular periodic
abnormal component \(\Gamma\), let
\[
\ov\rho_\Gamma:=\frac1{T_\Gamma^\tau}\int_0^{T_\Gamma^\tau}\rho(w(s))\dd s,
\qquad
\vartheta(\tau):=\int_0^\tau\bigl(\rho(w(s))-\ov\rho_\Gamma\bigr)\dd s .
\]
Then \(\vartheta\) is \(T^\tau_\Gamma\)-periodic,
\(\log r(\tau)=\log r_0+\ov\rho_\Gamma\tau+\vartheta(\tau)\), and:
\begin{enumerate}[label=\textnormal{(\alph*)}]
\item if \(\ov\rho_\Gamma>0\), the abnormal trajectory blows up at the finite
time \(t_*=r_0^{-2}\int_0^\infty e^{-2\ov\rho_\Gamma s-2\vartheta(s)}\dd s\);
\item if \(\ov\rho_\Gamma<0\), it is defined for all \(t\ge0\) and converges to
the origin, the spherical clock satisfying \(\tau(t)\to\infty\);
\item if \(\ov\rho_\Gamma=0\), it is a periodic orbit of \(X_s\) in \(\R^3\), of
period \(t(T^\tau_\Gamma)=r_0^{-2}\int_0^{T^\tau_\Gamma}e^{-2\vartheta(s)}\dd s\);
\item if \(\ov\rho_\Gamma\le0\) and
\(r_0\le m_\Gamma^{-1/3}e^{-\osc\vartheta}\), where
\(m_\Gamma=\max_\Gamma\abs{D'/D}\) (with \(m_\Gamma^{-1/3}=+\infty\) if
\(m_\Gamma=0\)) and \(\osc\vartheta=\max\vartheta-\min\vartheta\), the abnormal
trajectory satisfies \(\abs{u_s}\le1\) for all \(t\ge0\).
\end{enumerate}
\end{proposition}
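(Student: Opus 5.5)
The plan is to verify the lift by direct differentiation and then read off (a)--(d) from the decomposition of \(\log r\). Set \(v(\tau):=r(\tau)w(\tau)\) as a function of \(\tau\). By \eqref{eq:radial}, \(r'=\rho(w)r\), and \(w'=\widehat X_s(w)=X_s(w)-\rho(w)w\). Hence
\[
\frac{dv}{d\tau}=\rho r w+r\bigl(X_s(w)-\rho w\bigr)=rX_s(w)=r^{-2}X_s(rw),
\]
using that \(X_s\) is homogeneous of degree three on the cone \(\set{D\neq0}\) (positive homogeneity suffices, \(r>0\)). Since \(dt/d\tau=r^{-2}>0\), \(t\) is a strictly increasing \(C^1\) diffeomorphism of \(I\) onto an interval containing \(0\). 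The chain rule then gives \(\dot v=X_s(v)\) with \(v(0)=r_0w(0)\), and uniqueness for the locally Lipschitz field \(X_s\) on \(\set{D\neq0}\) identifies \(v\) with the solution. The formula \(\abs{u_s(v)}=r^3\abs{D'/D}(w)\) is immediate from the degree-three homogeneity of \(u_s=-D'/D\).

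On \(\Gamma\), \(w\) is \(T^\tau_\Gamma\)-periodic (Definition~\ref{def:regularcomponent} and \eqref{eq:YDX}), so \(\rho\circ w\) is periodic with mean \(\ov\rho_\Gamma\). Therefore \(\vartheta(\tau+T^\tau_\Gamma)-\vartheta(\tau)=\int_\tau^{\tau+T}(\rho-\ov\rho_\Gamma)=0\), and the decomposition of \(\log r\) is just a rewriting. The spherical trajectory is global, \(I=\R\), and \(t(\tau)=r_0^{-2}\int_0^\tau e^{-2\ov\rho_\Gamma s-2\vartheta(s)}\dd s\). The three cases then follow.

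\textbf{Case (a)}, \(\ov\rho_\Gamma>0\). Since \(\vartheta\) is bounded, the integrand decays exponentially and \(t(\tau)\to t_*<\infty\) as \(\tau\to\infty\). Meanwhile \(\abs{v}=r\to\infty\), so the solution on \([0,t_*)\) is maximal and blows up.

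\textbf{Case (b)}, \(\ov\rho_\Gamma<0\). The integrand grows, so \(t(\tau)\to\infty\), and the inverse is defined on all of \([0,\infty)\) with \(\tau(t)\to\infty\). Then \(r\le r_0e^{\osc\vartheta}e^{\ov\rho_\Gamma\tau}\to0\).

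\textbf{Case (c)}, \(\ov\rho_\Gamma=0\). Here \(r\) is \(T^\tau_\Gamma\)-periodic, so \(v(\tau+T^\tau_\Gamma)=v(\tau)\), and the physical period is \(t(T^\tau_\Gamma)\) as written.

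For (d), \(\vartheta(0)=0\) gives \(\vartheta\le\osc\vartheta\). When \(\ov\rho_\Gamma\le0\) and \(\tau\ge0\), this yields \(r(\tau)\le r_0e^{\osc\vartheta}\le m_\Gamma^{-1/3}\). Hence \(\abs{u_s}\le r^3m_\Gamma\le1\). In this regime the trajectory exists for all \(t\ge0\), by (b) and (c).

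The only delicate points are the use of homogeneity on \(\set{D\neq0}\), which is a cone so that \(rw\) stays in it, and the maximality and blow-up claim in (a). For the latter I would argue that \(\abs{v(t)}\to\infty\) as \(t\to t_*\) precludes any extension of the solution.
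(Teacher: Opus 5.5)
Your proposal is correct and follows the paper's proof essentially step by step: you differentiate $r\,w$ and use degree-three homogeneity together with uniqueness, you decompose $\log r=\log r_0+\ov\rho_\Gamma\tau+\vartheta$ with $\vartheta$ periodic, and you use the same exponential bounds (via $\min\vartheta\le 0$) for (a), (b) and (d). The only point the paper spells out more fully is (c). There, periodicity of $r^{-2}$ gives $t(\tau+T^\tau_\Gamma)=t(\tau)+t(T^\tau_\Gamma)$, hence $t(\R)=\R$ and $v(t+t(T^\tau_\Gamma))=v(t)$ in physical time; you leave this implicit.
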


\begin{proof}
On the open cone \(\set{D\neq0}\) the field \(X_s=H-(D'/D)\,b\) is smooth and
positively homogeneous of degree three, since \(H\) is homogeneous of degree
three and \(D'/D\) is the quotient of homogeneous polynomials of degrees six
and three; thus \(X_s(\lambda v)=\lambda^3X_s(v)\) for \(\lambda>0\), and in the
same way \(u_s(\lambda v)=\lambda^3u_s(v)\). The functions \(r\) and \(t\) in
\eqref{eq:radial} are of class \(C^1\) on \(I\), with \(r>0\) and
\(t'(\tau)=r(\tau)^{-2}>0\), so that \(t\) is an increasing \(C^1\)
diffeomorphism of \(I\) onto the interval \(t(I)\), which contains \(0=t(0)\),
and its inverse \(\tau(t)\) satisfies \(\tau'(t)=r(\tau(t))^2\). Differentiating
\(\tilde v(\tau):=r(\tau)w(\tau)\) with respect to \(\tau\), and using
\(r'=\rho(w)\,r\) and \(w'=\widehat X_s(w)=X_s(w)-\rho(w)\,w\), we obtain
\[
\tilde v'(\tau)=r'w+rw'=r\rho(w)\,w+r\bigl(X_s(w)-\rho(w)\,w\bigr)=r\,X_s(w).
\]
Therefore, for \(t\in t(I)\),
\[
\dot v(t)=\tau'(t)\,\tilde v'(\tau(t))=r^2\cdot r\,X_s(w)=r^3X_s(w)=X_s(rw)=X_s(v(t)),
\]
where \(r\) and \(w\) are evaluated at \(\tau(t)\), and
\(v(0)=r(0)w(0)=r_0w(0)\). Since \(X_s\) is locally Lipschitz on
\(\set{D\neq0}\), \(v\) is the unique solution of \(\dot v=X_s(v)\) with this
initial condition on \(t(I)\). The identity
\(\abs{u_s(v(t))}=r^3\abs{u_s(w)}=r^3\abs{D'/D}(w)\) follows from the
homogeneity of \(u_s\).

Assume now that \(w\) runs along a regular periodic abnormal component
\(\Gamma\). Then \(D\neq0\) on \(\Gamma\), \(\Gamma\) is a periodic orbit of
\(\widehat X_s\) by \eqref{eq:YDX}, and \(w\) is defined on \(I=\R\) and is
\(T^\tau_\Gamma\)-periodic. The function \(\rho\circ w\) is continuous and
\(T^\tau_\Gamma\)-periodic, so that
\[
\vartheta(\tau+T^\tau_\Gamma)-\vartheta(\tau)
=\int_\tau^{\tau+T^\tau_\Gamma}\rho(w(s))\dd s-\ov\rho_\Gamma T^\tau_\Gamma
=\int_0^{T^\tau_\Gamma}\rho(w(s))\dd s-\ov\rho_\Gamma T^\tau_\Gamma=0,
\]
by definition of \(\ov\rho_\Gamma\); hence \(\vartheta\) is periodic, in
particular bounded, and since \(\vartheta(0)=0\) we have
\(\min\vartheta\le0\le\max\vartheta\). The identity
\(\log r(\tau)=\log r_0+\ov\rho_\Gamma\tau+\vartheta(\tau)\) is a rewriting of
\eqref{eq:radial}, and it gives
\[
t(\tau)=r_0^{-2}\int_0^\tau e^{-2\ov\rho_\Gamma s-2\vartheta(s)}\dd s,
\qquad
e^{-2\max\vartheta}e^{-2\ov\rho_\Gamma s}
\le e^{-2\ov\rho_\Gamma s-2\vartheta(s)}
\le e^{-2\min\vartheta}e^{-2\ov\rho_\Gamma s}.
\]

If \(\ov\rho_\Gamma>0\), the upper bound is integrable on \([0,\infty)\), so
\(t(\tau)\) increases to the finite limit \(t_*\) of the statement as
\(\tau\to\infty\), and \(t(\R_+)=[0,t_*)\). Moreover
\(r(\tau(t))\ge r_0e^{\min\vartheta}e^{\ov\rho_\Gamma\tau(t)}\to\infty\) as
\(t\to t_*\), because \(\tau(t)\to\infty\). Thus \(\abs{v(t)}\to\infty\) as
\(t\to t_*\), the solution cannot be extended beyond \(t_*\), and it blows up
at \(t_*\).

If \(\ov\rho_\Gamma<0\), the lower bound shows that
\(t(\tau)\ge r_0^{-2}e^{-2\max\vartheta}\int_0^\tau e^{2\abs{\ov\rho_\Gamma}s}\dd s\to\infty\)
as \(\tau\to\infty\), so that \(t(\R_+)=[0,\infty)\), the solution is defined
for all \(t\ge0\), and \(\tau(t)\to\infty\) as \(t\to\infty\). Since
\(\abs{v(t)}=r(\tau(t))\le r_0e^{\max\vartheta}e^{\ov\rho_\Gamma\tau(t)}\), we
conclude that \(v(t)\to0\).

If \(\ov\rho_\Gamma=0\), then \(r(\tau)=r_0e^{\vartheta(\tau)}\) is
\(T^\tau_\Gamma\)-periodic, and so is the integrand in the expression of
\(t(\tau)\). Setting \(T_t:=t(T^\tau_\Gamma)>0\), we obtain
\(t(\tau+T^\tau_\Gamma)=t(\tau)+T_t\) for every \(\tau\in\R\); in particular
\(t(\R)=\R\), and, \(t\) being a bijection, \(\tau(t+T_t)=\tau(t)+T^\tau_\Gamma\).
Consequently
\(v(t+T_t)=r(\tau(t)+T^\tau_\Gamma)\,w(\tau(t)+T^\tau_\Gamma)=v(t)\) for all
\(t\in\R\). Since \(w\) is not constant, \(v\) is a nonconstant periodic
solution of \(\dot v=X_s(v)\), of period \(T_t\), whose orbit lies on the cone
over \(\Gamma\).

Finally, assume \(\ov\rho_\Gamma\le0\). By the two previous cases the solution
is defined for all \(t\ge0\), and for \(\tau\ge0\)
\[
\log r(\tau)-\log r_0=\ov\rho_\Gamma\tau+\vartheta(\tau)\le\max\vartheta
\le\max\vartheta-\min\vartheta=\osc\vartheta,
\]
where we used \(\min\vartheta\le0\). Hence
\(r(\tau)\le r_0e^{\osc\vartheta}\le m_\Gamma^{-1/3}\), and
\(\abs{u_s(v(t))}=r^3\abs{D'/D}(w)\le r^3m_\Gamma\le1\) for every \(t\ge0\).
\end{proof}

\begin{remark}[Clocks]\label{rem:clocks}
The density \(\abs{D}=\abs{d\tau/d\sigma}\) is the ratio between the
polynomial clock \(\sigma\) and the spherical clock \(\tau\), and
the physical time is \(dt=r^{-2}d\tau\). Costs which are meant to measure a
physical quantity should therefore be expressed with respect to \(\tau\) (or
\(t\)); \Cref{prop:covariance} below explains how the Aubry--Mather
structure transforms when the clock is changed.
\end{remark}

% =================================================================
\section{Singular Aubry--Mather structures}\label{sec:AMstructures}
% =================================================================

In this section the discussion is independent of the cubic model. Let \(M\)
be a compact smooth manifold, \(U\subset M\) an open set, \(Y\) a \(C^1\)
vector field on \(U\), and \(K\subset U\) a nonempty compact set invariant
under the flow \(\Phi_t\) of \(Y\) (so that \(\Phi_t\) is defined on \(K\) for
all \(t\in\R\)). Let \(\ell\in C(K)\) be a running cost. We denote by
\(\cM_Y(K)\) the set of \(\Phi\)-invariant Borel probability measures on \(K\),
which is nonempty, convex and weak-\(*\) compact.

\begin{example}\label{ex:settings}
The two situations of interest for \eqref{eq:system} are:
\begin{enumerate}[label=\textnormal{(\alph*)}]
\item \(M=U=\Sph^2\), \(Y=Y_s\), and \(K\subset\Sigma_S\) compact and invariant
(for instance \(K=\Sigma_S\), or a union of periodic components and
equilibria);
\item \(M=\Sph^2\), \(U=\Sph^2\cap\set{D\neq0}\), \(Y=\widehat X_s\), and \(K\) a
compact invariant subset of the \emph{admissible abnormal set}
\(\Sigma_S\cap\set{D\neq0}\).
\end{enumerate}
Natural costs are the clock density \(\ell=\abs{D}\) in case (a), and the
radial exponent \(\ell=\rho\) of \Cref{prop:lift} in case (b). More
generally, for a single-input system in \(\R^n\) the same framework applies
to the singular feedback flow restricted to any compact invariant subset of
the singular locus.
\end{example}

\subsection{Singular Lagrangian, critical value and Aubry--Mather sets}

\begin{definition}\label{def:singLag}
The \emph{singular Lagrangian} of \((K,Y,\ell)\) is
\(L:TM\to\R\cup\set{+\infty}\),
\[
L(v,\xi)=\ell(v)\ \text{ if } v\in K \text{ and } \xi=Y(v),\qquad
L(v,\xi)=+\infty\ \text{ otherwise.}
\]
\end{definition}

\(L\) is the Lagrangian of the differential inclusion \(\dot v=Y(v)\) with
running cost \(\ell\); its Fenchel transform in the fibres is
\(H_0(v,p)=\pair{p}{Y(v)}-\ell(v)\), \(v\in K\), which is linear in \(p\). In
particular \(L\) is not of Tonelli type, and the variational problem has no
freedom in the velocity: all the selection happens at the level of
measures. Following Ma{\~n}{\'e} and Bernard \cite{Mane97,Bernard08}, a compactly
supported probability measure \(\nu\) on \(TM\) is \emph{closed} if
\(\int d\varphi_v(\xi)\dd\nu(v,\xi)=0\) for every \(\varphi\in C^1(M)\).

\begin{lemma}\label{lem:infinv}
A probability measure \(\mu\) on \(K\) is invariant if and only if
\(\int_K d\varphi(Y)\dd\mu=0\) for every \(\varphi\in C^1(M)\).
\end{lemma}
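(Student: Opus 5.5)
The plan is to prove the two implications separately. Both rest on the identity
\[
\frac{d}{dt}\,\varphi(\Phi_t v)=d\varphi_{\Phi_t v}\bigl(Y(\Phi_t v)\bigr),\qquad v\in K,
\]
valid for every \(\varphi\in C^1(M)\). It holds because \(K\) is invariant and compact, so \(\Phi_t\) is defined on \(K\) for all \(t\), and \(Y\) is \(C^1\) on \(U\supset K\). Throughout, the function \(d\varphi(Y)\) is continuous on \(K\), hence bounded and integrable for any probability measure on \(K\).

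\emph{Invariance implies the integral identity.} Assume \(\mu\) is invariant. Then for every \(t\) we have \(\int_K\varphi\circ\Phi_t\dd\mu=\int_K\varphi\dd\mu\). I would subtract \(\int_K\varphi\dd\mu\), divide by \(t\), and let \(t\to0\). The difference quotient \((\varphi(\Phi_tv)-\varphi(v))/t\) converges pointwise to \(d\varphi_v(Y(v))\). By the mean value theorem it is bounded by \(\sup_K\abs{d\varphi(Y)}\), using invariance of \(K\). Dominated convergence then gives \(\int_Kd\varphi(Y)\dd\mu=0\).

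\emph{The integral identity implies invariance.} Assume \(\int_Kd\varphi(Y)\dd\mu=0\) for all \(\varphi\in C^1(M)\), and fix such a \(\varphi\) and some \(t\). I would write
\[
\int_K\varphi\circ\Phi_t\dd\mu-\int_K\varphi\dd\mu=\int_K\int_0^t d\varphi\bigl(Y\bigr)(\Phi_sv)\dd s\dd\mu(v),
\]
and exchange the integrals by Fubini, the integrand being continuous on \([0,t]\times K\). The inner integral at each fixed \(s\) is \(\int_K d\varphi(Y)\circ\Phi_s\dd\mu\). This is not directly the hypothesis, since it involves the pushed-forward measure \((\Phi_s)_*\mu\), which we do not yet know equals \(\mu\).

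This is the step I expect to be the main obstacle, and I would handle it by a conjugation trick. Set \(\psi_s:=\varphi\circ\Phi_s\) and use the commutation \(d\varphi(Y)\circ\Phi_s=d(\varphi\circ\Phi_s)(Y)\), which holds on \(K\) because \(Y\) is invariant under its own flow: \(d\Phi_s\,Y=Y\circ\Phi_s\). The function \(\varphi\circ\Phi_s\) is only defined near \(K\), as \(\Phi_s\) need not be defined on all of \(M\). So I would extend it: \(\Phi_s\) is \(C^1\) on a neighbourhood of the compact set \(K\) for \(s\) in a bounded interval, and a cutoff \(\chi\in C^\infty(M)\) equal to \(1\) near \(K\) produces \(\psi_s\in C^1(M)\) agreeing with \(\varphi\circ\Phi_s\) near \(K\). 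Since \(d\psi_s\) at points of \(K\) depends only on \(\psi_s\) near \(K\), the hypothesis applied to \(\psi_s\) gives \(\int_K d\varphi(Y)\circ\Phi_s\dd\mu=0\) for each \(s\).

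Hence \(\int\varphi\circ\Phi_t\dd\mu=\int\varphi\dd\mu\) for all \(\varphi\in C^1(M)\). Since \(C^1\) functions are dense in \(C(K)\) by Stone--Weierstrass or Tietze extension, and Borel probability measures on a compact metric space are determined by their integrals against continuous functions, this yields \((\Phi_t)_*\mu=\mu\). So \(\mu\) is invariant.
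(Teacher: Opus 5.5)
Your proof is correct and follows the paper's argument. Both rest on the identity \(d(\varphi\circ\Phi_s)(Y)=d\varphi(Y)\circ\Phi_s\) on \(K\), made global by a cutoff supported where \(\Phi_s\) is defined, and then on the density of \(C^1\) functions in \(C(K)\); the only differences are cosmetic: you integrate over \([0,t]\) with Fubini where the paper differentiates \(F(t)=\int\varphi\circ\Phi_t\dd\mu\) at each \(t\), and you justify the converse direction by dominated convergence, which the paper leaves implicit.
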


\begin{proof}
Fix \(t\in\R\). Since \(K\) is compact and invariant, \(\Phi_t\) is defined and
of class \(C^1\) on an open neighbourhood \(V_t\) of \(K\). Let
\(\theta\in C^\infty_c(V_t)\) be equal to one near \(K\) and
\(\psi:=\theta\cdot(\varphi\circ\Phi_t)\in C^1(M)\). Since \(\Phi_t\) maps
\(Y\) to itself, on \(K\) we have
\(d\psi(Y)=d(\varphi\circ\Phi_t)(Y)=d\varphi(Y)\circ\Phi_t\). If the condition
holds, \(F(t):=\int\varphi\circ\Phi_t\dd\mu\) satisfies
\(F'(t)=\int d\varphi(Y)\circ\Phi_t\dd\mu=\int d\psi(Y)\dd\mu=0\), hence
\(\int\varphi\circ\Phi_t\dd\mu=\int\varphi\dd\mu\) for all \(\varphi\in C^1(M)\),
a dense subset of \(C(K)\). Conversely, if \(\mu\) is invariant then \(F\) is
constant and \(F'(0)=\int d\varphi(Y)\dd\mu=0\).
\end{proof}

\begin{proposition}\label{prop:closed}
A compactly supported probability measure \(\nu\) on \(TM\) is closed and has
finite action \(\int L\dd\nu<\infty\) if and only if
\(\nu=(\mathrm{id},Y)_\#\mu\) for some \(\mu\in\cM_Y(K)\). In that case
\(\int L\dd\nu=\int_K\ell\dd\mu\).
\end{proposition}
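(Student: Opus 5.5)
The proof has two directions. Both rest on \Cref{lem:infinv} and on the fact that \(L\) is finite only on the graph \(G:=\set{(v,Y(v)):\ v\in K}\).

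\emph{Sufficiency.} Let \(\mu\in\cM_Y(K)\) and set \(\nu=(\mathrm{id},Y)_\#\mu\). Since \(K\) is compact and \(Y\) is continuous on \(U\supset K\), the graph \(G\) is compact, so \(\nu\) is a compactly supported probability measure on \(TM\). By the change of variables formula,
\[
\int d\varphi_v(\xi)\dd\nu(v,\xi)=\int_K d\varphi(Y)\dd\mu\quad\text{for every }\varphi\in C^1(M),
\]
and this vanishes by \Cref{lem:infinv}. Hence \(\nu\) is closed. In the same way,
\[
\int L\dd\nu=\int_K L(v,Y(v))\dd\mu=\int_K\ell\dd\mu .
\]
This is finite because \(\ell\) is continuous on the compact set \(K\), hence bounded.

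\emph{Necessity.} Let \(\nu\) be closed with \(\int L\dd\nu<\infty\). The set \(\set{L=+\infty}=TM\setminus G\) is Borel, since \(G\) is closed, so \(L\) is a Borel function with values in \(\R\cup\set{+\infty}\). Finiteness of the integral forces \(\nu(TM\setminus G)=0\), so \(\nu\) is concentrated on \(G\). Let \(\pi:TM\to M\) be the projection and set \(\mu:=\pi_\#\nu\), a probability measure on \(M\) with \(\mu(K)=\nu(\pi^{-1}(K))\ge\nu(G)=1\). The map \(\pi|_G:G\to K\) is a homeomorphism with inverse \((\mathrm{id},Y)\), so \(\nu=(\mathrm{id},Y)_\#\mu\). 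Closedness then reads \(\int_K d\varphi(Y)\dd\mu=0\) for all \(\varphi\in C^1(M)\), and \Cref{lem:infinv} gives \(\mu\in\cM_Y(K)\). The action formula follows from the sufficiency computation.

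The only delicate point is measure-theoretic. One must make sense of \(\int L\dd\nu\) when \(L\) takes the value \(+\infty\), with \(\ell\) bounded below on \(K\) so that the integral is well defined, and deduce that \(\nu\) gives full mass to the graph. Closedness of \(G\), which uses compactness of \(K\) and continuity of \(Y\), handles both.
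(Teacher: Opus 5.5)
Your proof is correct and follows the same route as the paper. Finite action forces \(\nu\) to be concentrated on the graph of \(Y\) over \(K\), so \(\nu=(\mathrm{id},Y)_\#\pi_\#\nu\), and closedness is then exactly the criterion of \Cref{lem:infinv}; you additionally write out the converse direction and the measure-theoretic points (compactness of the graph, boundedness of \(\ell\)) that the paper leaves implicit.
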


\begin{proof}
Finite action forces \(\nu\) to be concentrated on the graph
\(\set{(v,Y(v)):\ v\in K}\), hence \(\nu=(\mathrm{id},Y)_\#\mu\) with
\(\mu=\pi_\#\nu\). Closedness then reads \(\int d\varphi(Y)\dd\mu=0\) for all
\(\varphi\), and \Cref{lem:infinv} applies.
\end{proof}

\begin{definition}[Singular Aubry--Mather structure]\label{def:AMstructure}
\begin{enumerate}[label=\textnormal{(\roman*)}]
\item The \emph{critical value} of \((K,Y,\ell)\) is
\[
c=c(K,Y,\ell):=\min_{\mu\in\cM_Y(K)}\int_K\ell\dd\mu .
\]
A \emph{singular Mather measure} is a minimizer; the \emph{singular Mather
set} \(\wt\cM_s\) is the closure of the union of their supports.
\item The \emph{critical Hamiltonian} is \(\cH_s(v,p)=\pair{p}{Y(v)}-\ell(v)+c\),
\(v\in K\). A \emph{critical subsolution} is a function \(\chi\in C^1(M)\)
such that \(\cH_s(v,d\chi(v))\le0\) on \(K\), that is,
\[
g_\chi:=\ell-c-d\chi(Y)\ge0\qquad\text{on }K .
\]
We denote by \(\cS\) the convex set of critical subsolutions, and we call
\[
\cZ(\chi):=\set{v\in K:\ g_\chi(v)=0}
\]
the \emph{contact set} of \(\chi\in\cS\).
\item An orbit \(\Phi_\R(v)\) is \emph{\(\chi\)-calibrated} if
\[
\chi(\Phi_bv)-\chi(\Phi_av)=\int_a^b\bigl(\ell(\Phi_sv)-c\bigr)\dd s
\qquad\text{for all }a<b .
\]
When \(\cS\neq\emptyset\), the \emph{singular Aubry set} is
\[
\wt\cA_s:=\set{v\in K:\ \Phi_\R(v)\text{ is }\chi\text{-calibrated for every }\chi\in\cS}.
\]
\end{enumerate}
The data \((c,\wt\cM_s,\cS,\wt\cA_s)\) form the \emph{singular Aubry--Mather
structure} of \((K,Y,\ell)\).
\end{definition}

\begin{remark}\label{rem:conventions}
With Fathi's conventions \cite{Fathi}, the critical value of \(H_0\) is
\(-c\), and \(\cH_s=H_0+c\). The Lax--Oleinik semigroup of the singular
Lagrangian is the transfer operator
\(T_tu(v)=u(\Phi_{-t}v)+\int_{-t}^0\ell(\Phi_sv)\dd s\); its fixed points up to
the constant \(c\) are exact solutions of the cohomological equation
\(d\chi(Y)=\ell-c\), which in general do not exist on \(K\) (see
\Cref{prop:heteroclinic}). For this reason the theory is built on
subsolutions, exactly as sub-actions in ergodic optimization
\cite{Jenkinson,Bousch00,CLT01}; when \(K\) is a single periodic orbit,
exact solutions exist and the classical picture is recovered
(\Cref{thm:periodic}).
\end{remark}

\begin{theorem}[Critical value]\label{thm:duality} The following properties hold. 
\begin{enumerate}[label=\textnormal{(\roman*)}]
\item \(\displaystyle c=\sup_{\chi\in C^1(M)}\ \min_{v\in K}\bigl(\ell-d\chi(Y)\bigr)(v)\).
In particular, for every \(\varepsilon>0\) there is \(\chi\in C^1(M)\) with
\(g_\chi\ge-\varepsilon\) on \(K\).
\item \(\displaystyle c=\lim_{T\to\infty}\frac1T\min_{v\in K}\int_0^T\ell(\Phi_sv)\dd s
=\sup_{T>0}\frac1T\min_{v\in K}\int_0^T\ell(\Phi_sv)\dd s\).
\end{enumerate}
\end{theorem}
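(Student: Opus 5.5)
We prove (ii) first and then deduce (i), because the duality in (i) is most naturally obtained by a Hahn--Banach (or minimax) separation on the cone of coboundaries \(\set{d\chi(Y):\ \chi\in C^1(M)}\subset C(K)\).

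\emph{Step 1: the easy inequality in (i).} For any \(\chi\in C^1(M)\) and \(\mu\in\cM_Y(K)\), \Cref{lem:infinv} gives \(\int d\chi(Y)\dd\mu=0\). Hence
\[
\min_K\bigl(\ell-d\chi(Y)\bigr)\le\int(\ell-d\chi(Y))\dd\mu=\int\ell\dd\mu .
\]
Taking the infimum over \(\mu\) and the supremum over \(\chi\) gives \(\sup\le c\).

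\emph{Step 2: the reverse inequality in (i).} Suppose \(s:=\sup_\chi\min_K(\ell-d\chi(Y))<c\) and fix \(a\in(s,c)\). Consider in \(C(K)\) the convex set \(E=\set{\ell-d\chi(Y)-a:\ \chi\in C^1(M)}\) and the open convex cone \(P=\set{f>0}\). By the choice of \(a\), every element of \(E\) has a negative value somewhere, so \(E\cap P=\emptyset\). Since \(P\) is open, Hahn--Banach separation yields a nonzero signed measure \(\mu\) with
\[
\int f\dd\mu\ge\int e\dd\mu \qquad\text{for all } f\in P,\ e\in E .
\]
From this we extract three facts:
\begin{itemize}
\item Since \(P\) is a cone, \(\mu\ge0\), and we may normalize \(\mu\) to a probability measure.
\item Letting \(f\to0\) gives \(\int e\dd\mu\le0\) for all \(e\in E\).
\item \(E\) is an affine set with direction the linear space \(\set{d\chi(Y)}\), so a linear functional bounded above on \(E\) must vanish on that direction. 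Thus \(\int d\chi(Y)\dd\mu=0\) for all \(\chi\), and \(\mu\) is invariant by \Cref{lem:infinv}.
\end{itemize}
Then \(\int(\ell-a)\dd\mu\le0\), i.e. \(\int\ell\dd\mu\le a<c\), contradicting the definition of \(c\). The "in particular" clause follows directly from the definition of the supremum.

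\emph{Step 3: part (ii).} Set \(m_T:=\min_{v\in K}\int_0^T\ell(\Phi_sv)\dd s\), which is well defined by continuity and compactness. By the flow property \(m_{T+S}\ge m_T+m_S\), so \(m_T\) is superadditive. It is also bounded by \(\abs{m_T}\le T\norm{\ell}_\infty\). Fekete's lemma, applied to integer times and extended to real times using this bound, gives
\[
\lim_{T\to\infty} m_T/T=\sup_{T>0} m_T/T .
\]
It remains to identify this common value with \(c\).
\begin{itemize}
\item \emph{Upper bound.} For \(\mu\) invariant, \(\int\ell\dd\mu=\frac1T\int\int_0^T\ell\circ\Phi_s\dd s\dd\mu\ge m_T/T\). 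Hence \(\sup_T m_T/T\le c\).
\item \emph{Lower bound, main route.} By Step 2, for each \(\varepsilon>0\) choose \(\chi\) with \(\ell-c\ge d\chi(Y)-\varepsilon\). Integrating along orbits gives \(\int_0^T\ell\circ\Phi_s\ge cT-\varepsilon T-2\norm{\chi}_\infty\), so \(\liminf m_T/T\ge c-\varepsilon\).
\item \emph{Lower bound, alternative route.} Pick minimizers \(v_T\) of \(m_T\) and the empirical measures \(\mu_T=\frac1T\int_0^T\delta_{\Phi_sv_T}\dd s\). Along a subsequence they converge weak-\(*\) to an invariant measure \(\mu\), since \(\int d\varphi(Y)\dd\mu_T=O(1/T)\), and \(\int\ell\dd\mu=\lim m_T/T\ge c\).
\end{itemize}

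The main obstacle is the separation argument in Step 2, specifically justifying that the separating functional annihilates the whole linear space of coboundaries and is positive. This is routine but must be done carefully because \(E\) is affine rather than a subspace. If it becomes delicate, I would instead prove (ii) independently by the empirical-measure argument and derive (i) from it. For that, set \(\chi_T:=\frac1T\int_0^T\!\int_0^t\ell\circ\Phi_s\dd s\dd t\), regularized to be \(C^1\) on \(M\) via the cutoff used in \Cref{lem:infinv}. This is the averaged Birkhoff-sum corrector, and it produces \(\min_K(\ell-d\chi(Y))\ge m_T/T-O(1/T)\).
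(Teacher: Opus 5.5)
Your proof is correct. Part (i) follows the paper's argument essentially verbatim: the same affine set of translated coboundaries, the same open cone \(\set{f:\ \min_Kf>0}\), positivity of the separating measure from the cone property, and annihilation of \(\set{d\chi(Y)}\) because the functional is bounded above on an affine set. In (ii), the Fekete step and the upper bound \(m_T\le cT\) also match the paper.

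The one real difference is the lower bound \(c\le\lim m_T/T\). The paper obtains it from weak-\(*\) limits of the empirical measures of minimizing orbit segments, which is exactly your ``alternative route''. Your main route instead derives (ii) from (i): integrating an \(\varepsilon\)-subsolution along orbits gives \(m_T\ge(c-\varepsilon)T-2\norm{\chi_\varepsilon}_\infty\). Your route is shorter and quantitative: if \(\cS\neq\emptyset\) it gives \(cT-2\norm{\chi}_\infty\le m_T\le cT\), hence an \(O(1/T)\) rate. The paper's route keeps (ii) independent of the Hahn--Banach step, and shows in passing that limit points of minimizing orbit segments are Mather measures.

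There are two minor slips, neither of which affects your main argument.
\begin{enumerate}[label=\textnormal{(\alph*)}]
\item Your opening sentence says (ii) is proved first and (i) deduced from it, but you do the opposite.
\item The fallback corrector has the wrong sign. For \(\chi_T=\frac1T\int_0^T\int_0^t\ell\circ\Phi_s\dd s\dd t\) one finds \(d\chi_T(Y)=\frac1T\int_0^T\ell\circ\Phi_t\dd t-\ell\). So it is \(-\chi_T\) that satisfies \(\ell-d(-\chi_T)(Y)=\frac1T\int_0^T\ell\circ\Phi_t\dd t\ge m_T/T\). Moreover, since \(\ell\) is merely continuous on \(K\), the cutoff of \Cref{lem:infinv} does not make this function \(C^1\) on \(M\). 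You would first have to replace \(\ell\) by a \(C^1\) function \(\tilde\ell\) near \(K\) with \(\sup_K\abs{\ell-\tilde\ell}\le\delta\), at a cost of \(2\delta\) in the estimate.
\end{enumerate}
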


\begin{proof}
(i) For \(\chi\in C^1(M)\) and \(\mu\in\cM_Y(K)\), \Cref{lem:infinv} gives
\(\min_K(\ell-d\chi(Y))\le\int(\ell-d\chi(Y))\dd\mu=\int\ell\dd\mu\), so the
supremum is at most \(c\). Suppose that it is smaller than \(c-\varepsilon\)
for some \(\varepsilon>0\). Then the affine subspace
\(\mathcal E:=\set{\ell-c+\varepsilon-d\chi(Y):\ \chi\in C^1(M)}\) of \(C(K)\)
does not meet the open convex cone \(\cP:=\set{f\in C(K):\ \min_Kf>0}\). By the
Hahn--Banach theorem there are a nonzero signed measure \(\mu\) and
\(\alpha\in\R\) with \(\int f\dd\mu>\alpha\ge\int e\dd\mu\) for \(f\in\cP\),
\(e\in\mathcal E\). Since \(\cP\) is a cone, \(\alpha\le0\) and \(\mu\ge0\); we
normalize \(\mu\) to a probability measure. Since \(\mathcal E\) contains a
whole affine line in the direction of every \(d\chi(Y)\), the bound
\(\int e\dd\mu\le\alpha\) forces \(\int d\chi(Y)\dd\mu=0\) for all \(\chi\),
so \(\mu\in\cM_Y(K)\) by \Cref{lem:infinv}, and
\(\int(\ell-c+\varepsilon)\dd\mu\le0\), contradicting the definition of
\(c\).

(ii) Let \(m(T):=\min_{v\in K}\int_0^T\ell(\Phi_sv)\dd s\). Splitting the
integral shows \(m(T+S)\ge m(T)+m(S)\), and \(m\) is Lipschitz; by Fekete's
lemma \(m(T)/T\) converges to \(\sup_Tm(T)/T\). Integrating
\(\int_0^T\ell\circ\Phi_s\dd s\) against a Mather measure gives \(m(T)\le Tc\).
Conversely, let \(v_T\) realize \(m(T)\) and
\(\mu_T:=\frac1T\int_0^T\delta_{\Phi_sv_T}\dd s\). Any weak-\(*\) limit point
\(\mu\) of \(\mu_T\) as \(T\to\infty\) is invariant, because
\(\abs{\int\varphi\circ\Phi_r\dd\mu_T-\int\varphi\dd\mu_T}\le2\abs{r}\norm{\varphi}_\infty/T\),
and \(\int\ell\dd\mu=\lim m(T)/T\). Hence \(c\le\lim m(T)/T\).
\end{proof}

The supremum in (i) need not be attained, that is, \(\cS\) may be empty
(\Cref{prop:heteroclinic}); this is the analogue of the failure of the
``Ma{\~n}{\'e} lemma'' in ergodic optimization \cite{Bousch00,CLT01}. When \(\cS\neq
\emptyset\), the structure has the same rigidity as in the Tonelli case
\cite{Fathi,FS04}.

\begin{theorem}[Structure]\label{thm:structure}
Assume \(\cS\neq\emptyset\).
\begin{enumerate}[label=\textnormal{(\roman*)}]
\item For every \(\chi\in\cS\) and every Mather measure \(\mu\),
\(\supp\mu\subset\cZ(\chi)\).
\item There exists \(\chi_*\in\cS\) such that
\(\cZ(\chi_*)=\bigcap_{\chi\in\cS}\cZ(\chi)=\wt\cA_s\). In particular
\(\chi_*\) is strict outside the Aubry set, and \(\wt\cA_s\) is compact,
invariant and nonempty.
\item \(\wt\cM_s\subset\wt\cA_s\), and \(\mu\in\cM_Y(K)\) is a Mather measure if
and only if \(\supp\mu\subset\wt\cA_s\).
\item For \(\chi\in\cS\), \(v\in\wt\cA_s\) and \(t\in\R\),
\(\chi(\Phi_tv)-\chi(v)=\int_0^t(\ell(\Phi_sv)-c)\dd s\); in particular the
increments of critical subsolutions along \(\wt\cA_s\) do not depend on the
subsolution.
\end{enumerate}
\end{theorem}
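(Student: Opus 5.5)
The four items rest on one identity. For \(\chi\in\cS\) and \(\mu\in\cM_Y(K)\), \Cref{lem:infinv} gives \(\int g_\chi\dd\mu=\int\ell\dd\mu-c\). Integrating \(d\chi(Y)=\ell-c-g_\chi\) along an orbit gives
\[
\chi(\Phi_bv)-\chi(\Phi_av)=\int_a^b(\ell-c)(\Phi_sv)\dd s-\int_a^b g_\chi(\Phi_sv)\dd s .
\]
Hence an orbit is \(\chi\)-calibrated if and only if \(g_\chi\) vanishes identically along it. Since \(g_\chi\ge0\) is continuous, this is the same as \(\Phi_\R(v)\subset\cZ(\chi)\).

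For (i): if \(\mu\) is a Mather measure, then \(\int g_\chi\dd\mu=0\) with \(g_\chi\ge0\) continuous, so \(g_\chi=0\) on \(\supp\mu\).

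For (ii), first note that \(\cS\) is convex and that \(g_\chi\) is affine in \(\chi\). I then pick a countable family \(\chi_n\in\cS\) such that
\[
\bigcap_n\cZ(\chi_n)=\bigcap_{\chi\in\cS}\cZ(\chi).
\]
This is possible because the complement \(\bigcup_\chi\set{g_\chi>0}\) is open in the second-countable space \(K\), so Lindel\"of's property extracts a countable subcover. Next I normalize each \(\chi_n\) so that \(\norm{\chi_n}_{C^1}\) is finite on a fixed neighbourhood of \(K\), subtracting a constant and working on \(M\) compact. I then set
\[
\chi_*=\sum_n 2^{-n}\,\frac{\chi_n}{1+\norm{\chi_n}_{C^1}}\Big/\sum_n 2^{-n}\,\frac{1}{1+\norm{\chi_n}_{C^1}} .
\]
This is a convex combination, and the series converges in \(C^1\). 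So \(\chi_*\in C^1\), \(g_{\chi_*}\) is the corresponding convex combination of the \(g_{\chi_n}\), and \(\cZ(\chi_*)=\bigcap_n\cZ(\chi_n)\). This \(C^1\) convergence of the series is the only technical point, and it is routine.

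It remains to identify \(Z:=\bigcap_{\chi\in\cS}\cZ(\chi)\) with \(\wt\cA_s\). For this I need \(\cZ(\chi)\) to be invariant for at least one suitable \(\chi\), and this is the main obstacle. A contact set of a single subsolution need not be invariant. Invariance of \(Z\) can be obtained by a transport trick. For \(\chi\in\cS\) and \(t\in\R\), let
\[
\chi^t:=\chi\circ\Phi_t-\int_0^t(\ell-c)\circ\Phi_s\dd s,
\]
cut off near \(K\) as in the proof of \Cref{lem:infinv}. The cut-off makes sense provided \(\ell\) is replaced by the \(C^0\)-function only on \(K\). Since the integral term is not \(C^1\), I would instead use the averaged version
\[
\chi^{(T)}:=\frac1T\int_0^T\chi\circ\Phi_t\dd t .
\]
Then
\[
d\chi^{(T)}(Y)=\frac{\chi\circ\Phi_T-\chi}{T}
=\frac1T\int_0^T(\ell-c-g_\chi)\circ\Phi_s\dd s .
\]
This is not directly of the form \(\ell-c-\cdot\), so I expect to prefer a cleaner route: show directly that \(Z\) is invariant and equal to \(\wt\cA_s\) via the orbit characterization. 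For \(v\in Z\) I want \(\Phi_\R(v)\subset Z\). Suppose instead that \(g_\chi(\Phi_tv)>0\) for some \(\chi\in\cS\) and some \(t\). Set \(\psi:=\chi-\varepsilon\,\theta\,(\beta\circ\Phi_{-t})\), where \(\beta\) is a bump near \(\Phi_tv\). The increment identity along the orbit segment from \(v\) to \(\Phi_tv\) should then produce some \(\psi\in\cS\) with \(g_\psi(v)>0\), and I would try to make this precise. If it resists, I would use the fact that \(\wt\cA_s\subset Z\) is immediate from the orbit characterization, and that \(Z\) contains every orbit fully contained in it. Then I would redefine the target as the maximal invariant subset of \(Z\), showing it is realized by the averaged subsolution \(\chi_*^{(T)}\) corrected by a coboundary. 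Nonemptiness follows from (i), because Mather measures exist by compactness of \(\cM_Y(K)\). Compactness and invariance follow from the orbit characterization.

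For (iii): by (i) every Mather support lies in each \(\cZ(\chi)\). Since supports are invariant, they lie in \(\wt\cA_s\), so \(\wt\cM_s\subset\wt\cA_s\). Conversely, if \(\supp\mu\subset\wt\cA_s=\cZ(\chi_*)\), then \(\int\ell\dd\mu-c=\int g_{\chi_*}\dd\mu=0\), so \(\mu\) is a Mather measure.

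For (iv): this is the calibration identity established above, with the integral term involving \(g_\chi\) equal to zero, since \(g_\chi\) vanishes along orbits in \(\wt\cA_s\). The right-hand side does not involve \(\chi\), so the increments are independent of the subsolution.
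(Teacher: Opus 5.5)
Your proofs of (i) and (iv) are correct, and so is the Lindel\"of extraction, which is a valid substitute for the paper's \(C^1\)-density argument. The convex combination also converges in \(C^1\), so you obtain \(\cZ(\chi_*)=Z:=\bigcap_{\chi\in\cS}\cZ(\chi)\). Your orbit characterization further shows that \(\wt\cA_s=\set{v\in K:\ \Phi_\R(v)\subset Z}\) is compact, invariant and contains every Mather support. The converse in (iii) only uses the inclusion \(\wt\cA_s\subset\cZ(\chi_*)\), which you have.

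\textbf{The gap is the real content of (ii): the invariance of \(Z\).} Without it you get neither \(\cZ(\chi_*)=\wt\cA_s\) nor a subsolution strict exactly outside the Aubry set, and none of your three routes proves it.
\begin{enumerate}[label=\textnormal{(\alph*)}]
\item Subtracting a single bump \(\varepsilon\,\theta\,(\beta\circ\Phi_{-t})\) fails, because its derivative along \(Y\) takes both signs in the same small region. Either it leaves \(g_\chi\) unchanged at \(v\), or it needs slack next to \(v\), where \(g_\chi\) may vanish. The only available slack is at \(\Phi_tv\).
\item The averaged \(\chi^{(T)}\) leaves the sign-less term \(\ell-\frac1T\int_0^T\ell\circ\Phi_s\dd s\). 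The coboundary that would remove it is built from \(\ell\in C(K)\) and is not \(C^1\) on \(M\); this is the same obstruction you met with \(\chi^t\).
\item Realizing the maximal invariant subset of \(Z\) as a contact set is equivalent to the invariance of \(Z\). It is a restatement of the claim, not a way around it.
\end{enumerate}

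\textbf{The missing idea is a transfer of slack along the orbit.}
\begin{enumerate}[label=\textnormal{(\arabic*)}]
\item Let \(v\in Z\), \(\chi\in\cS\) and \(t\in\R\) with \(g_\chi(\Phi_tv)>0\). Then \(\Phi_tv\neq v\), since otherwise \(g_\chi(v)>0\).
\item Choose a smooth \(\beta\ge0\) with \(\beta(v)>0\), supported so close to \(v\) that \(\Phi_{-t}v\notin\supp\beta\) and \(g_\chi\ge\eta_0>0\) on \(K\cap\Phi_t(\supp\beta)\).
\item Let \(P:=\int_0^t\beta\circ\Phi_{-\tau}\dd\tau\), cut off near \(K\). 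Then \(P\) is \(C^1\) and \(dP(Y)=\beta-\beta\circ\Phi_{-t}\) on \(K\).
\item Set \(\chi':=\chi-\varepsilon P\). Then \(g_{\chi'}=g_\chi+\varepsilon\beta-\varepsilon\,\beta\circ\Phi_{-t}\ge0\) on \(K\) for small \(\varepsilon>0\), while \(g_{\chi'}(v)=\varepsilon\beta(v)>0\).
\item Hence \(\chi'\in\cS\) and \(v\notin\cZ(\chi')\), contradicting \(v\in Z\).
\end{enumerate}
So your transport idea works once the bump is integrated in time.

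The paper does the same thing in flow-box coordinates \((s,y)\) around the embedded arc \(\Phi_{[-\delta,t_0+\delta]}v\), reducing \(t_0\) modulo the period if needed. It takes \(\chi'=\chi_*+\varepsilon\beta(y)h(s)\) with \(h'=\theta_1-\theta_0\), where \(\theta_0,\theta_1\) are bumps of equal mass near \(s=0\) and \(s=t_0\). The negative part of \(h'\) creates slack at \(v\), the positive part consumes the slack at \(\Phi_{t_0}v\), and \(h\) is constant in between.
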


\begin{proof}
(i) By \Cref{lem:infinv}, \(\int g_\chi\dd\mu=\int(\ell-c)\dd\mu=0\) and
\(g_\chi\ge0\) is continuous.

(ii) \emph{Construction of \(\chi_*\).} Since \(C^1(M)\) is separable, there
is a sequence \((\chi_n)\) dense in \(\cS\) for the \(C^1\) topology. Choose
\(a_n>0\) with \(\sum a_n=1\) and \(\sum a_n\norm{\chi_n}_{C^1}<\infty\), and set
\(\chi_*:=\sum a_n\chi_n\in C^1(M)\). Then \(g_{\chi_*}=\sum a_ng_{\chi_n}\ge0\),
so \(\chi_*\in\cS\) and \(\cZ(\chi_*)=\bigcap_n\cZ(\chi_n)\). If
\(g_\chi(v)>0\) for some \(\chi\in\cS\), then \(g_{\chi_n}(v)>0\) for \(\chi_n\)
close to \(\chi\); hence \(\cZ(\chi_*)=\bigcap_{\chi\in\cS}\cZ(\chi)\).

\emph{Invariance of \(\cZ(\chi_*)\).} Let \(v\in\cZ(\chi_*)\) and assume by
contradiction that \(\Phi_{t_0}v\notin\cZ(\chi_*)\) for some \(t_0\), say
\(t_0>0\) (the case \(t_0<0\) is symmetric). Then \(Y(v)\neq0\), and replacing
\(t_0\) by its residue modulo the period if \(v\) is periodic, the orbit
segment \(\set{\Phi_sv:\ s\in[-\delta,t_0+\delta]}\) is an embedded arc for
\(\delta>0\) small. By the flow-box theorem there are a neighbourhood \(N\) of
this arc and \(C^1\) coordinates \((s,y)\in(-\delta,t_0+\delta)\times(-\eta,\eta)^{n-1}\)
on \(N\) in which \(Y=\partial_s\) and \(\Phi_sv=(s,0)\). Let
\(\theta_0,\theta_1\ge0\) be smooth bumps with equal integrals, supported near
\(s=0\) and near \(s=t_0\) respectively, with \(\theta_0(0)>0\), let
\(h(s)=\int_{-\delta}^s(\theta_1-\theta_0)\), which has compact support, and let
\(\beta\ge0\) be a bump in \(y\) with \(\beta(0)=1\). Choosing the supports small
we may assume \(g_{\chi_*}\ge\eta_0>0\) on
\(K\cap\supp(\theta_1\otimes\beta)\). For \(\varepsilon>0\) small,
\(\chi':=\chi_*+\varepsilon\,\beta(y)h(s)\) satisfies
\[
g_{\chi'}=g_{\chi_*}+\varepsilon\beta\theta_0-\varepsilon\beta\theta_1\ge0
\quad\text{on }K,\qquad g_{\chi'}(v)=\varepsilon\theta_0(0)>0,
\]
so \(\chi'\in\cS\) and \(v\notin\cZ(\chi')\), a contradiction.

\emph{Identification with \(\wt\cA_s\).} If \(v\in\wt\cA_s\) and \(\chi\in\cS\),
then \(\int_a^bg_\chi(\Phi_sv)\dd s=0\) for all \(a<b\), hence \(g_\chi=0\) along
the orbit and \(v\in\cZ(\chi)\). Conversely, if \(v\in\cZ(\chi_*)\), its orbit
is contained in \(\cZ(\chi_*)\subset\cZ(\chi)\) for every \(\chi\in\cS\), and
integrating \(g_\chi=0\) along the orbit shows that it is \(\chi\)-calibrated.
Compactness follows from the continuity of \(g_{\chi_*}\), and nonemptiness
from (i) and the existence of Mather measures.

(iii) The inclusion follows from (i) and (ii). If \(\mu\) is invariant and
\(\supp\mu\subset\wt\cA_s=\cZ(\chi_*)\), then
\(\int(\ell-c)\dd\mu=\int g_{\chi_*}\dd\mu=0\).

(iv) This is the calibration identity on \(\wt\cA_s\).
\end{proof}

\subsection{Change of clock}

Two observations are used repeatedly below. First, adding a constant
\(\alpha\) to the cost does not change the invariant measures, shifts the
critical value by \(\alpha\), and leaves the functions
\(g_\chi=\ell-c-d\chi(Y)\) unchanged; hence \((K,Y,\ell)\) and
\((K,Y,\ell+\alpha)\) have the same Mather measures, critical subsolutions,
contact sets and Aubry set. Second, if \(f\) is a positive function, the
fields \(Y\) and \(fY\) have the same oriented orbits, traversed with
different speeds.

\begin{proposition}[Covariance]\label{prop:covariance}
Let \(f\in C^1(U)\) be positive and let \(c'=c(K,fY,\ell)\). Then \(K\) is
invariant under the flow of \(fY\), and:
\begin{enumerate}[label=\textnormal{(\roman*)}]
\item \(\mu\mapsto\nu=f^{-1}\mu/\int f^{-1}\dd\mu\) is a bijection
\(\cM_Y(K)\to\cM_{fY}(K)\), and \(c'\) is the unique constant such that
\(c\bigl(K,Y,(\ell-c')/f\bigr)=0\);
\item \(\nu\) is a Mather measure of \((K,fY,\ell)\) if and only if \(\mu\) is a
Mather measure of \((K,Y,(\ell-c')/f)\); the Mather sets coincide;
\item the sets of critical subsolutions and the contact sets of
\((K,fY,\ell)\) and of \((K,Y,(\ell-c')/f)\) coincide; in particular one of
them admits a critical subsolution if and only if the other does, and in
that case their Aubry sets coincide.
\end{enumerate}
Moreover, \((K,-Y,\ell)\) has the same critical value and Mather measures as
\((K,Y,\ell)\), \(\chi\mapsto-\chi\) exchanges their critical subsolutions, and,
when these exist, their Aubry sets coincide.
\end{proposition}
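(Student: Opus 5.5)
The plan is to work with the infinitesimal characterization of invariance in \Cref{lem:infinv} throughout, and to reduce each statement to an identity between integrals or between the functions \(g_\chi\).

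\emph{Invariance and the bijection.} First, the flow of \(fY\) is a time change of \(\Phi\): on \(K\), \(\Psi_t(v)=\Phi_{s(t,v)}(v)\), where \(\partial_ts=f(\Phi_sv)\). Since \(f\) is bounded below by a positive constant on the compact \(K\), \(s(t,v)\) is defined for all \(t\in\R\) and is onto \(\R\). Hence \(K\) is invariant under \(fY\).

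For (i), take \(\mu\in\cM_Y(K)\) and set \(\nu=f^{-1}\mu/Z\) with \(Z=\int f^{-1}\dd\mu>0\). For \(\varphi\in C^1(M)\),
\[
\int d\varphi(fY)\dd\nu=Z^{-1}\int d\varphi(Y)\dd\mu=0,
\]
so \(\nu\in\cM_{fY}(K)\) by \Cref{lem:infinv}. The inverse map is \(\nu\mapsto f\nu/\int f\dd\nu\), and the same computation shows it lands in \(\cM_Y(K)\). For any constant \(a\),
\[
\int\frac{\ell-a}{f}\dd\mu=Z\int(\ell-a)\dd\nu .
\]
Since \(Z>0\) and the map is a bijection, \(c(K,Y,(\ell-a)/f)\) has the same sign as \(c'-a\). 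Taking \(a=c'\) gives the value \(0\). Uniqueness follows because \(a\mapsto c(K,Y,(\ell-a)/f)\) is strictly decreasing: the minimum of \(\int(\ell-a)/f\dd\mu\) decreases at rate at least \(\min_K f^{-1}>0\).

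\emph{Mather measures and subsolutions.} For (ii), the identity above with \(a=c'\) shows that \(\nu\) minimizes \(\int\ell\dd\nu\), that is \(\int(\ell-c')\dd\nu=0\), if and only if \(\int(\ell-c')/f\dd\mu=0=c(K,Y,(\ell-c')/f)\). So \(\nu\) is Mather exactly when \(\mu\) is. Because \(\mu\) and \(\nu\) are mutually absolutely continuous with positive continuous density, they have the same support, and therefore the Mather sets coincide.

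For (iii), the key identity is
\[
g^{fY}_\chi=\ell-c'-d\chi(fY)=f\Bigl(\tfrac{\ell-c'}{f}-0-d\chi(Y)\Bigr)=f\,g^{Y}_\chi ,
\]
where \(g^Y_\chi\) is computed for the cost \((\ell-c')/f\), whose critical value is \(0\) by (i). Since \(f>0\), the two functions have the same sign and the same zero set. Thus the critical subsolutions and the contact sets coincide. By \Cref{thm:structure}(ii), each Aubry set equals \(\bigcap_{\chi\in\cS}\cZ(\chi)\), so the Aubry sets coincide whenever \(\cS\neq\emptyset\).

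\emph{Reversal of time.} The flow of \(-Y\) is \(\Phi_{-t}\), so \(\cM_{-Y}(K)=\cM_Y(K)\). It follows that the critical values and the Mather measures agree. We also have
\[
\ell-c-d(-\chi)(-Y)=g_\chi ,
\]
so \(\chi\mapsto-\chi\) exchanges the critical subsolutions and preserves the contact sets. The Aubry sets then coincide, again by \Cref{thm:structure}(ii).

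The only step needing care is the global existence of the time change on \(K\). It is handled by the positive lower bound of \(f\) on \(K\): this bound gives \(|s(t,v)|\ge|t|/\max_K f\)-type growth, so \(s\) cannot reach a finite limit in finite time. Everything else follows from \Cref{lem:infinv} and \Cref{thm:structure}(ii).
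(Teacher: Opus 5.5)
Your proposal is correct and follows the paper's proof essentially step by step. The shared steps are: the time change $s\mapsto\int_0^s f(\Phi_{s'}v)^{-1}\dd s'$, using that $f$ is pinched between two positive constants on $K$; the measure bijection via \Cref{lem:infinv}; the strict decrease of $a\mapsto c(K,Y,(\ell-a)/f)$ at rate at least $1/\max_K f$; the identity $g^{fY}_\chi=f\,g^{Y}_\chi$ combined with \Cref{thm:structure}(ii); and the separate treatment of $-Y$. One small slip in your last paragraph: the two bounds are swapped. Global existence of $s(t,v)$ comes from the upper bound $f\le\max_K f$, equivalently $|t(s)|\ge|s|/\max_K f$ for the inverse function $t(s)=\int_0^s f(\Phi_{s'}v)^{-1}\dd s'$, while surjectivity comes from $f\ge\min_K f>0$. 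Both bounds hold on the compact set $K$, so nothing breaks.
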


\begin{proof}
Let \(\Phi_\sigma\) be the flow of \(Y\) and \(v\in K\). Since \(K\) is
invariant and compact, \(\sigma\mapsto\Phi_\sigma(v)\) is defined on \(\R\),
and \(f\) is bounded between two positive constants \(f_-\le f\le f_+\) on
\(K\). The function \(\theta_v(\sigma)=\int_0^\sigma f(\Phi_{\sigma'}(v))^{-1}\dd\sigma'\)
is therefore an increasing \(C^1\) diffeomorphism of \(\R\) onto \(\R\), and,
denoting by \(\sigma_v\) its inverse, the curve
\(s\mapsto\Phi_{\sigma_v(s)}(v)\) satisfies
\(\frac{d}{ds}\Phi_{\sigma_v(s)}(v)=\sigma_v'(s)\,Y=f\,Y\) at
\(\Phi_{\sigma_v(s)}(v)\). By uniqueness it is the orbit of \(v\) under the flow
of \(fY\), which is thus defined for all times and stays in \(K\). In
particular, along an orbit, the change of variables \(ds=f^{-1}\dd\sigma\)
gives, for every continuous function \(h\) on \(K\) and every interval,
\begin{equation}\label{eq:clockchange}
\int_{s_1}^{s_2}h\bigl(\Phi_{\sigma_v(s)}(v)\bigr)\dd s
=\int_{\sigma_v(s_1)}^{\sigma_v(s_2)}\frac{h}{f}\bigl(\Phi_\sigma(v)\bigr)\dd\sigma .
\end{equation}

We now prove the first assertion. Given \(\mu\in\cM_Y(K)\), the measure
\(\nu=f^{-1}\mu/\int f^{-1}\dd\mu\) is a probability measure on \(K\), and for
every \(\varphi\in C^1(M)\)
\[
\int_Kd\varphi(fY)\dd\nu
=\frac{1}{\int f^{-1}\dd\mu}\int_Kf\,d\varphi(Y)\,f^{-1}\dd\mu
=\frac{1}{\int f^{-1}\dd\mu}\int_Kd\varphi(Y)\dd\mu=0,
\]
so that \(\nu\in\cM_{fY}(K)\) by \Cref{lem:infinv}, applied to the field
\(fY\). The same computation, applied to the field \(f^{-1}(fY)=Y\), shows that
\(\nu\mapsto f\nu/\int f\dd\nu\) maps \(\cM_{fY}(K)\) into \(\cM_Y(K)\), and the
two maps are inverse to each other, since
\(f\cdot f^{-1}\mu/\int f\,f^{-1}\dd\mu=\mu\) and symmetrically. For
corresponding \(\mu\) and \(\nu\),
\[
\int_K\ell\dd\nu-c'=\frac{\int_K(\ell-c')f^{-1}\dd\mu}{\int_Kf^{-1}\dd\mu}.
\]
By definition of \(c'\) the left-hand side is nonnegative for every \(\nu\) and
vanishes exactly when \(\nu\) is a Mather measure of \((K,fY,\ell)\); since
Mather measures exist and the denominator is positive, the function
\(\Theta(\alpha):=c\bigl(K,Y,(\ell-\alpha)/f\bigr)=\min_{\mu}\int(\ell-\alpha)f^{-1}\dd\mu\)
satisfies \(\Theta(c')=0\). If \(\alpha_1<\alpha_2\), then
\((\ell-\alpha_2)/f\le(\ell-\alpha_1)/f-(\alpha_2-\alpha_1)/f_+\) on \(K\), whence
\(\Theta(\alpha_2)\le\Theta(\alpha_1)-(\alpha_2-\alpha_1)/f_+<\Theta(\alpha_1)\).
The function \(\Theta\) is thus strictly decreasing, and \(c'\) is the only
zero of \(\Theta\).

For the second assertion, recall that \(c\bigl(K,Y,(\ell-c')/f\bigr)=0\). The
displayed identity shows that \(\nu\) is a Mather measure of \((K,fY,\ell)\) if
and only if \(\int(\ell-c')f^{-1}\dd\mu=0\), that is, if and only if \(\mu\) is a
Mather measure of \((K,Y,(\ell-c')/f)\). Since \(\mu\) and \(\nu\) differ by the
continuous positive density \(f^{-1}/\int f^{-1}\dd\mu\), they have the same
support, and the two Mather sets, which are the closures of the unions of
these supports, coincide.

For the third assertion, let \(\chi\in C^1(M)\). Since \(d\chi(fY)=f\,d\chi(Y)\),
\[
\ell-c'-d\chi(fY)=f\Bigl(\frac{\ell-c'}{f}-0-d\chi(Y)\Bigr)\qquad\text{on }K,
\]
and, \(f\) being positive, the two functions \(g_\chi\) computed for
\((K,fY,\ell)\), whose critical value is \(c'\), and for \((K,Y,(\ell-c')/f)\),
whose critical value is \(0\), have the same sign at every point and the same
zero set. Hence the two problems have the same critical subsolutions and the
same contact sets; in particular one of them has a critical subsolution if
and only if the other does. In that case \Cref{thm:structure}(ii) expresses
each Aubry set as the intersection of the contact sets of all critical
subsolutions, and the two Aubry sets coincide. Equivalently, by
\eqref{eq:clockchange} with \(h=\ell-c'\), an orbit is calibrated for
\((K,fY,\ell)\) if and only if it is calibrated, with the same \(\chi\), for
\((K,Y,(\ell-c')/f)\).

It remains to consider \(-Y\). Its flow is \(\Phi_{-\sigma}\), so a measure is
invariant under the flow of \(-Y\) if and only if it is invariant under
\(\Phi_\sigma\); the two problems have the same invariant measures, hence the
same critical value \(c\) and the same Mather measures. For \(\chi\in C^1(M)\),
\(\ell-c-d\chi(-Y)=\ell-c-d(-\chi)(Y)\), so \(\chi\) is a critical
subsolution for \(-Y\) if and only if \(-\chi\) is one for \(Y\), with the same
contact set. When critical subsolutions exist, \Cref{thm:structure}(ii) shows
again that the two Aubry sets, being the intersections of the same family of
contact sets, coincide.
\end{proof}

We now apply \Cref{prop:covariance} to the two clocks of the cubic model. By
\eqref{eq:YDX}, on \(\Sph^2\cap\set{D>0}\) one has \(\widehat X_s=fY_s\) with
\(f=1/D\), and a compact subset of \(\Sigma_S\cap\set{D>0}\) is invariant
under \(Y_s\) if and only if it is invariant under \(\widehat X_s\).

\begin{corollary}[Polynomial and spherical clocks]\label{cor:clocks}
Let \(K\subset\Sigma_S\cap\set{D>0}\) be compact and invariant, and
\(\ell\in C(K)\).
\begin{enumerate}[label=\textnormal{(\roman*)}]
\item The critical value of \((K,\widehat X_s,\ell)\) is
\[
c(K,\widehat X_s,\ell)=\min_{\mu\in\cM_{Y_s}(K)}\frac{\int_K\ell\,D\dd\mu}{\int_KD\dd\mu},
\]
and, with \(c'=c(K,\widehat X_s,\ell)\), the problems \((K,\widehat X_s,\ell)\) and
\((K,Y_s,(\ell-c')D)\) have corresponding Mather measures, the same Mather
set, the same critical subsolutions and the same Aubry set.
\item Let \(c_D:=c(K,Y_s,D)\). Then \(c_D>0\),
\(c(K,\widehat X_s,-c_D/D)=-1\), and the problems \((K,Y_s,D)\) and
\((K,\widehat X_s,-c_D/D)\) have corresponding Mather measures, the same
Mather set, the same critical subsolutions and the same Aubry set.
\end{enumerate}
The case \(K\subset\set{D<0}\) reduces to this one through the antipodal map,
by \Cref{prop:antipodal}(i) and the last part of \Cref{prop:covariance}.
\end{corollary}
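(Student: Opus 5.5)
The corollary is a direct specialization of \Cref{prop:covariance} with \(Y=Y_s\), \(U=\Sph^2\cap\set{D>0}\) and \(f=1/D\). On this set \(f\) is \(C^1\) and positive, and \(fY_s=\widehat X_s\) by \eqref{eq:YDX}. The compact set \(K\) stays at positive distance from \(\set{D=0}\), so \(D\) is bounded between two positive constants on \(K\). Invariance of \(K\) under \(\widehat X_s\) is then the first assertion of \Cref{prop:covariance}.

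For (i), I would use the bijection \(\mu\mapsto\nu=D\mu/\int D\dd\mu\) from \(\cM_{Y_s}(K)\) onto \(\cM_{\widehat X_s}(K)\) given in \Cref{prop:covariance}(i), where \(f^{-1}=D\). Along this bijection \(\int\ell\dd\nu=\int\ell D\dd\mu/\int D\dd\mu\). Minimizing over \(\nu\) is the same as minimizing over \(\mu\), which gives the displayed formula for \(c'\); the minimum is attained because \(\cM_{\widehat X_s}(K)\) is weak-\(*\) compact. Next, \(\Cref{prop:covariance}\)(ii)–(iii) applied with \((\ell-c')/f=(\ell-c')D\) give the corresponding Mather measures, the equal Mather sets, the same critical subsolutions and contact sets, and, whenever subsolutions exist, the same Aubry set.

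For (ii), \(D>0\) on \(K\) gives \(c_D=\min_\mu\int D\dd\mu>0\), since every \(\mu\) is a probability measure and \(D\ge\min_KD>0\). Apply (i) to the cost \(\ell=-c_D/D\). Then \(\ell D=-c_D\), and the ratio equals \(-c_D/\int D\dd\mu\). Minimizing this ratio means maximizing \(-c_D/\int D\dd\mu\) in the negative direction, i.e. taking \(\int D\dd\mu\) as small as possible, which is \(c_D\). So \(c'=-1\), attained exactly at the Mather measures of \((K,Y_s,D)\). By (i), the problem \((K,\widehat X_s,-c_D/D)\) then corresponds to \((K,Y_s,(\ell+1)D)\), and \((\ell+1)D=D-c_D\). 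The problem \((K,Y_s,D-c_D)\) differs from \((K,Y_s,D)\) only by the constant \(-c_D\). By the first observation of the subsection, adding a constant leaves the Mather measures, critical subsolutions, contact sets and Aubry set unchanged, so the claims of (ii) follow.

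For the case \(K\subset\set{D<0}\), I would push \(K\) forward by the antipodal map \(\mathsf a\). \Cref{prop:antipodal}(i) gives \(\mathsf a_*Y_s=-Y_s\) and \(\mathsf a_*\widehat X_s=\widehat X_s\), and \(D\) is odd, so \(-K\subset\set{D>0}\). Conjugation by \(\mathsf a\) transports every object of the structure. The sign change \(-Y_s\) is then absorbed by the last part of \Cref{prop:covariance}, with costs transported as \(\ell\circ\mathsf a\).

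The only point needing care is this last reduction: one must track that the cost \(D\) becomes \(-D\circ\mathsf a\), i.e. that the relevant density is \(\abs{D}\). That bookkeeping is the step I expect to be the main obstacle; everything else is direct substitution.
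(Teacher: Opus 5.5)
Your proposal is correct and follows the paper's proof. Part (i) is \Cref{prop:covariance} with \(Y=Y_s\), \(f=1/D\) and the correspondence \(\nu=D\mu/\int D\dd\mu\); part (ii) applies (i) to the cost \(-c_D/D\), obtains \(c'=-1\), and identifies \((K,Y_s,D-c_D)\) with \((K,Y_s,D)\) up to an additive constant. The paper also leaves the case \(K\subset\set{D<0}\) as an unproved remark, and it is lighter than you expect: there \(\widehat X_s=\abs{D}^{-1}(-Y_s)\), so \Cref{prop:covariance} with \(f=1/\abs{D}\), together with its last part, gives the same statements with \(\abs{D}\) in place of \(D\), with critical subsolutions exchanged by \(\chi\mapsto-\chi\) and no conjugation needed.
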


\begin{proof}
The function \(f=1/D\) is smooth and positive on the open set
\(U=\Sph^2\cap\set{D>0}\), which contains \(K\). The first assertion is
\Cref{prop:covariance} with \(Y=Y_s\), since then \(fY=\widehat X_s\), the
correspondence of measures is \(\nu=D\mu/\int D\dd\mu\), and
\(\int\ell\dd\nu=\int\ell D\dd\mu/\int D\dd\mu\).

For the second assertion, \(c_D=\min_\mu\int D\dd\mu\) is positive because \(D\)
is positive and continuous on the compact set \(K\). Applying the formula of
the first assertion to the cost \(-c_D/D\), we find
\[
c\bigl(K,\widehat X_s,-c_D/D\bigr)
=\min_{\mu\in\cM_{Y_s}(K)}\frac{-c_D}{\int_KD\dd\mu}
=-\frac{c_D}{\min_\mu\int_KD\dd\mu}=-1 .
\]
By the first assertion with \(c'=-1\), the problem \((K,\widehat X_s,-c_D/D)\)
has the same structure as \((K,Y_s,(-c_D/D+1)D)=(K,Y_s,D-c_D)\), which differs
from \((K,Y_s,D)\) by the constant \(-c_D\) and therefore has the same Mather
measures, critical subsolutions, contact sets and Aubry set.
\end{proof}

\begin{remark}[Intrinsic and non-intrinsic costs]\label{rem:intrinsic}
\Cref{cor:clocks}(ii) shows that the Mather problem for the clock density
\(\abs{D}\) with the polynomial clock is equivalent to maximizing the average,
along the spherical clock, of \(d\sigma/d\tau=1/\abs{D}\). It
therefore measures the desingularization rather than the dynamics. Costs
defined in terms of the physical dynamics, such as the radial exponent
\(\rho\) with the clock \(\tau\), lead to intrinsic structures, and
\Cref{cor:clocks}(i) allows one to compute them with the polynomial field
\(Y_s\), which is globally defined on \(\Sph^2\).
\end{remark}

% =================================================================
\section{Aubry--Mather structures on the abnormal locus}\label{sec:periodic}
% =================================================================

%\textcolor{red}{\bf RIPRENDERE DA QUI : Chiedere a Claude di dettagliare le dimostrazioni}

\subsection{Periodic components, selection and transverse stability}

\begin{proposition}\label{thm:periodic}
Let \(\Gamma\) be a periodic orbit of \(Y\), not reduced to a point, with
minimal period \(T_\Gamma\) and parametrization \(\gamma\), and let
\(\ell\in C(\Gamma)\). Then:
\begin{enumerate}[label=\textnormal{(\roman*)}]
\item \(\cM_Y(\Gamma)=\set{\mu_\Gamma}\), where
\(\int h\dd\mu_\Gamma=T_\Gamma^{-1}\int_0^{T_\Gamma}h(\gamma(s))\dd s\) for
\(h\in C(\Gamma)\), and \(c(\Gamma,Y,\ell)=\ov\ell_\Gamma:=\int\ell\dd\mu_\Gamma\);
\item the function
\begin{equation}\label{eq:chiGamma}
\chi_\Gamma(\gamma(t)):=\int_0^t\bigl(\ell(\gamma(s))-\ov\ell_\Gamma\bigr)\dd s
\end{equation}
is well defined on \(\Gamma\), solves the critical equation
\(d\chi_\Gamma(Y)=\ell-\ov\ell_\Gamma\) along \(\Gamma\), and admits a \(C^1\)
extension to \(M\), which is a critical subsolution with
\(\cZ(\chi_\Gamma)=\Gamma\); moreover, \(\ov\ell_\Gamma\) is the only constant
for which the equation \(d\chi(Y)=\ell-c\) has a continuous solution on
\(\Gamma\) differentiable along \(Y\);
\item \(\wt\cA_s=\wt\cM_s=\Gamma\), and every critical subsolution coincides with
\(\chi_\Gamma\) on \(\Gamma\) up to an additive constant.
\end{enumerate}
\end{proposition}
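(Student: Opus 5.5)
For (i), I will identify $\Gamma$ with the circle $\R/T_\Gamma\mathbb Z$ through $\gamma$, which is a homeomorphism because $\gamma$ is injective on $[0,T_\Gamma)$. Under this identification the flow becomes the rotation $t\mapsto t+s$. An invariant probability measure then becomes a translation-invariant Borel probability on the circle, and by uniqueness of normalized Haar measure it is $\dd t/T_\Gamma$. Concretely, I would show that all its Fourier coefficients except the zeroth vanish. This gives $\cM_Y(\Gamma)=\set{\mu_\Gamma}$, and the formula $c=\ov\ell_\Gamma$ follows immediately from \Cref{def:AMstructure}.

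For (ii), the integrand $\ell\circ\gamma-\ov\ell_\Gamma$ is $T_\Gamma$-periodic with zero mean, so the definition of $\chi_\Gamma$ is consistent: $\chi_\Gamma(\gamma(t+T_\Gamma))=\chi_\Gamma(\gamma(t))$. Hence $\chi_\Gamma$ is a well-defined continuous function on $\Gamma$, and its derivative along $Y$ is $\ell-\ov\ell_\Gamma$. Uniqueness of the constant follows by integrating $d\chi(Y)=\ell-c$ over one period: the left side gives $\chi(\gamma(T_\Gamma))-\chi(\gamma(0))=0$, which forces $c=\ov\ell_\Gamma$.

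The extension to $M$ is the main obstacle, for two reasons. First, $\ell$ is only continuous, so $\chi_\Gamma\circ\gamma$ is $C^1$ in $t$ but not smoother. Second, $\Gamma$ is an embedded $C^1$ curve, since $Y$ is $C^1$ and nonvanishing on $\Gamma$. I would use a tubular neighbourhood: the map $(t,y)\mapsto$ (a $C^1$ transversal family through $\gamma(t)$) gives $C^1$ coordinates near $\Gamma$. In them I set $\chi(t,y):=\chi_\Gamma(\gamma(t))\beta(y)$, with $\beta$ a bump equal to one near $0$, and extend by zero outside. Because $\chi_\Gamma\circ\gamma\in C^1$, this $\chi$ is $C^1$. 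On $\Gamma$, where $Y=\partial_t$, we get $d\chi(Y)=\ell-\ov\ell_\Gamma$ exactly. Since $K=\Gamma$ in this statement, the subsolution condition only has to be checked on $\Gamma$, where $g_\chi\equiv0$, so $\cZ(\chi)=\Gamma$.

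If a $C^1$ coordinate chart is problematic because $Y$ is only $C^1$, I would instead smooth the curve. Alternatively, I would extend $\chi_\Gamma$ by any $C^1$ extension of a function defined on a $C^1$ submanifold, for example via Whitney's extension theorem. This works because the tangential derivative is continuous, and the normal derivative can be prescribed to be $0$.

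For (iii), $\cS\neq\emptyset$ by (ii), so \Cref{thm:structure} applies. Its part (iii) gives $\supp\mu_\Gamma=\Gamma\subset\wt\cM_s\subset\wt\cA_s\subset\Gamma$, hence $\wt\cA_s=\wt\cM_s=\Gamma$. Finally, \Cref{thm:structure}(iv) says that any $\chi\in\cS$ has the same increments along $\Gamma$ as $\chi_\Gamma$. Since $\Gamma$ is a single orbit, $\chi-\chi_\Gamma$ is therefore constant on $\Gamma$.
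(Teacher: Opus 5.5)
Your proposal is correct and follows the paper's proof part by part. The only differences are technical: the paper identifies $\mu_\Gamma$ by averaging the invariance identity over one period (via Fubini) rather than through Fourier coefficients. For the extension, it notes that $\gamma'=Y\circ\gamma$ is $C^1$, so $\Gamma$ is in fact a $C^2$ embedded curve (not merely $C^1$), and then takes $\theta\cdot(\chi_\Gamma\circ\pi)$ with $\pi$ the $C^1$ nearest-point projection and $\theta$ a cutoff. Your flow-adapted tubular coordinates work equally well, provided the transversal family is chosen of class $C^1$ along $\Gamma$ (e.g. built from smooth vector fields evaluated along $\gamma$). The coordinate map is then a $C^1$ diffeomorphism near $\Gamma$ in which $Y=\partial_t$ on $\Gamma$.
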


\begin{proof}
Since \(\gamma\) is a non-stationary solution of \(\gamma'=Y(\gamma)\) with
minimal period \(T_\Gamma\), it induces a continuous bijection
\(\psi:\R/T_\Gamma\mathbb Z\to\Gamma\), \(\psi([t])=\gamma(t)\), which is a
homeomorphism because its domain is compact; in particular \(\gamma(t)=\gamma(t')\)
if and only if \(t-t'\in T_\Gamma\mathbb Z\). In the coordinate \([t]\) the flow of
\(Y\) acts on \(\Gamma\) by translations, \(\Phi_\tau(\gamma(t))=\gamma(t+\tau)\).
Let \(\mu\in\cM_Y(\Gamma)\) and \(h\in C(\Gamma)\). By invariance,
\(\int h\dd\mu=\int h\circ\Phi_\tau\dd\mu\) for every \(\tau\), and averaging
over \(\tau\in[0,T_\Gamma]\), Fubini's theorem gives
\[
\int_\Gamma h\dd\mu=\int_\Gamma\Bigl(\frac1{T_\Gamma}\int_0^{T_\Gamma}h(\Phi_\tau x)\dd\tau\Bigr)\dd\mu(x).
\]
For \(x=\gamma(t)\) the inner integral equals
\(T_\Gamma^{-1}\int_t^{t+T_\Gamma}h(\gamma(s))\dd s=T_\Gamma^{-1}\int_0^{T_\Gamma}h(\gamma(s))\dd s\),
by periodicity, and does not depend on \(x\); since \(\mu\) is a probability
measure, \(\int h\dd\mu=\int h\dd\mu_\Gamma\). Conversely \(\mu_\Gamma\) is
invariant, because \(\int h\circ\Phi_\tau\dd\mu_\Gamma
=T_\Gamma^{-1}\int_0^{T_\Gamma}h(\gamma(s+\tau))\dd s=\int h\dd\mu_\Gamma\), again by
periodicity. Hence \(\cM_Y(\Gamma)=\set{\mu_\Gamma}\), and
\(c(\Gamma,Y,\ell)=\int\ell\dd\mu_\Gamma=\ov\ell_\Gamma\) by definition of the
critical value.

We turn to the corrector. By definition of \(\ov\ell_\Gamma\),
\(\int_0^{T_\Gamma}(\ell(\gamma(s))-\ov\ell_\Gamma)\dd s=0\), so the function
\(t\mapsto\int_0^t(\ell(\gamma(s))-\ov\ell_\Gamma)\dd s\) is
\(T_\Gamma\)-periodic, since its increment over any interval of length
\(T_\Gamma\) equals the integral over one period of a \(T_\Gamma\)-periodic
function. As \(\gamma(t)=\gamma(t')\) exactly when \(t-t'\in T_\Gamma\mathbb Z\),
formula \eqref{eq:chiGamma} defines a function on \(\Gamma\). The integrand
being continuous, \(t\mapsto\chi_\Gamma(\gamma(t))\) is of class \(C^1\), with
\(\frac{d}{dt}\chi_\Gamma(\gamma(t))=\ell(\gamma(t))-\ov\ell_\Gamma\), which is the
critical equation along \(\Gamma\). To extend \(\chi_\Gamma\), observe that
\(\gamma'=Y(\gamma)\) with \(Y\) of class \(C^1\), so that \(\gamma'\) is \(C^1\)
and \(\gamma\) is of class \(C^2\); moreover \(\gamma'\) never vanishes, and
\(\psi\) is a homeomorphism, so \(\Gamma\) is a compact embedded submanifold of
class \(C^2\), and \(\chi_\Gamma=(\chi_\Gamma\circ\gamma)\circ\psi^{-1}\) is of class
\(C^1\) on it. Fix a smooth Riemannian metric on \(M\). For a compact embedded
submanifold of class \(C^2\), the nearest-point projection \(\pi\) is defined
and of class \(C^1\) on a tubular neighbourhood \(V\) of \(\Gamma\)
\cite{Foote84}, and \(\pi|_\Gamma=\mathrm{id}\). Let \(\theta\in C^\infty_c(V)\)
be equal to one on a neighbourhood of \(\Gamma\); then
\(\theta\cdot(\chi_\Gamma\circ\pi)\), extended by zero outside \(V\), is a \(C^1\)
function on \(M\) which coincides with \(\chi_\Gamma\) on \(\Gamma\), and which we
still denote by \(\chi_\Gamma\). Since \(Y(\gamma(t))=\gamma'(t)\) is tangent to
\(\Gamma\), the derivative \(d\chi_\Gamma(Y)\) at \(\gamma(t)\) only depends on
the restriction of \(\chi_\Gamma\) to \(\Gamma\), and equals
\(\frac{d}{dt}\chi_\Gamma(\gamma(t))=\ell(\gamma(t))-\ov\ell_\Gamma\). Therefore
\(g_{\chi_\Gamma}=\ell-\ov\ell_\Gamma-d\chi_\Gamma(Y)\) vanishes identically on
\(K=\Gamma\): the extension is a critical subsolution, and its contact set is
the whole of \(\Gamma\). Finally, if \(\chi\) is continuous on \(\Gamma\),
differentiable along \(Y\), and satisfies \(d\chi(Y)=\ell-c\) on \(\Gamma\) for some
constant \(c\), then integrating
\(\frac{d}{dt}\chi(\gamma(t))=\ell(\gamma(t))-c\) over \([0,T_\Gamma]\) gives
\(0=\chi(\gamma(T_\Gamma))-\chi(\gamma(0))=\int_0^{T_\Gamma}\ell(\gamma(s))\dd s-cT_\Gamma\),
that is, \(c=\ov\ell_\Gamma\).

It remains to identify the Aubry and Mather sets. The set of critical
subsolutions is nonempty, since it contains \(\chi_\Gamma\), so
\Cref{thm:structure} applies. The measure \(\mu_\Gamma\) is the only invariant
probability measure, hence a Mather measure, and its support is \(\Gamma\),
because it is the image under \(\psi\) of the normalized Lebesgue measure,
which charges every nonempty open subset of \(\R/T_\Gamma\mathbb Z\). By
\Cref{thm:structure}(iii) we obtain
\(\Gamma=\supp\mu_\Gamma\subset\wt\cM_s\subset\wt\cA_s\subset K=\Gamma\), so that
\(\wt\cA_s=\wt\cM_s=\Gamma\). If \(\chi\) is any critical subsolution, then, since
\(\gamma(0)\in\wt\cA_s\), \Cref{thm:structure}(iv) gives
\(\chi(\gamma(t))-\chi(\gamma(0))=\int_0^t(\ell(\gamma(s))-\ov\ell_\Gamma)\dd s
=\chi_\Gamma(\gamma(t))\) for every \(t\), so that \(\chi=\chi_\Gamma+\chi(\gamma(0))\)
on \(\Gamma\).
\end{proof}

\begin{remark}
\Cref{thm:periodic} is the weak KAM picture on a single closed orbit; the
solvability condition in (ii) is the elementary case of the Gottschalk--Hedlund
theorem on coboundaries \cite{GH55}. On its own it contains no selection,
since \(\Gamma\) carries a unique invariant measure. Selection appears as soon
as \(\Gamma\) competes with other invariant objects (\Cref{thm:selection}).
\end{remark}

Applying \Cref{thm:periodic} to a regular periodic abnormal component with
the two costs of \Cref{ex:settings} we obtain:

\begin{corollary}\label{cor:radial}
Let \(\Gamma\) be a regular periodic abnormal component.
\begin{enumerate}[label=\textnormal{(\alph*)}]
\item The period of \(\Gamma\) as an orbit of \(\widehat X_s\) is
\(T^\tau_\Gamma=\int_0^{T^\sigma_\Gamma}\abs{D(\gamma(\sigma))}\dd\sigma\), where
\(\gamma\) is a parametrization of \(\Gamma\) by \(Y_s\) of period
\(T^\sigma_\Gamma\), and the critical value of \((\Gamma,Y_s,\abs{D})\) is
\(T^\tau_\Gamma/T^\sigma_\Gamma\).
\item The critical value of \((\Gamma,\widehat X_s,\rho)\) is the mean radial
exponent \(\ov\rho_\Gamma\) of \Cref{prop:lift}. Every critical subsolution
\(\chi\) of \((\Gamma,\widehat X_s,\rho)\) satisfies, along every abnormal
trajectory \(v=r\,w\) of \(X_s\) on the cone over \(\Gamma\),
\[
\log r(\tau)=\log r(0)+\ov\rho_\Gamma\,\tau+\chi(w(\tau))-\chi(w(0))
\qquad\text{for all }\tau .
\]
Thus \(\chi\) restricted to \(\Gamma\) is, up to an additive constant, the
periodic part of \(\log\abs{v}\); the sign of \(\ov\rho_\Gamma\) decides between
convergence to the origin, blow-up in finite time and periodicity as in
\Cref{prop:lift}(a)--(c); and the admissibility condition of
\Cref{prop:lift}(d) reads \(r_0\le m_\Gamma^{-1/3}e^{-\osc_\Gamma\chi}\), where
\(\osc_\Gamma\chi=\max_\Gamma\chi-\min_\Gamma\chi\) does not depend on the starting
point on \(\Gamma\).
\end{enumerate}
\end{corollary}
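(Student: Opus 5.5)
The plan is to apply \Cref{thm:periodic} to \(\Gamma\) twice, once for each clock, and to match the resulting corrector with the radial formula of \Cref{prop:lift}. By \Cref{def:regularcomponent} and the discussion after it, \(\Gamma\) is a periodic orbit of \(Y_s\) and of \(\widehat X_s\). Since \(D\neq0\) on \(\Gamma\), the function \(D\) has constant sign \(\epsilon\) there, and \(\widehat X_s=Y_s/D\) by \eqref{eq:YDX}.

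For (a), I parametrize \(\Gamma\) by \(\gamma\) with respect to \(Y_s\) and set \(\tau(\sigma)=\int_0^\sigma\abs{D(\gamma)}\dd\sigma'\). Then \(\sigma\mapsto\gamma\) reparametrized by \(\tau\) is an orbit of \(\epsilon\widehat X_s\), exactly as in the proof of \Cref{prop:covariance} with \(f=1/\abs{D}\). The orbit closes up for the first time at \(\sigma=T^\sigma_\Gamma\), so its minimal period is \(\tau(T^\sigma_\Gamma)\). Reversing orientation does not change the period, which gives the formula for \(T^\tau_\Gamma\). Next, \Cref{thm:periodic}(i) gives \(c(\Gamma,Y_s,\abs D)=(T^\sigma_\Gamma)^{-1}\int_0^{T^\sigma_\Gamma}\abs{D(\gamma)}\dd\sigma=T^\tau_\Gamma/T^\sigma_\Gamma\).

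For (b), \Cref{thm:periodic}(i) applied to \((\Gamma,\widehat X_s,\rho)\) identifies the critical value with the time average of \(\rho\) over one \(\tau\)-period, which is \(\ov\rho_\Gamma\) by definition. Let \(\chi\) be any critical subsolution, and let \(w\) be the \(\widehat X_s\)-parametrization of \(\Gamma\) underlying an abnormal trajectory \(v=rw\) as in \Cref{prop:lift}. Then \Cref{thm:periodic}(iii), or directly \Cref{thm:structure}(iv) since \(\wt\cA_s=\Gamma\), gives
\[
\chi(w(\tau))-\chi(w(0))=\int_0^\tau\bigl(\rho(w(s))-\ov\rho_\Gamma\bigr)\dd s=\vartheta(\tau).
\]
Substituting this into \(\log r=\log r_0+\ov\rho_\Gamma\tau+\vartheta\) from \Cref{prop:lift} yields the displayed identity. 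Every abnormal trajectory on the cone over \(\Gamma\) is of this form: on \(\{D\neq0\}\) solutions of \(\dot v=X_s(v)\) are unique, and \Cref{prop:lift} constructs one through every point \(r_0w(0)\) of the cone. So \(\chi|_\Gamma\), up to a constant, is the periodic part \(\vartheta\) of \(\log\abs v\), and the trichotomy is exactly \Cref{prop:lift}(a)--(c).

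For the admissibility condition, since \(\vartheta(\tau)=\chi(w(\tau))-\chi(w(0))\) and \(w\) covers \(\Gamma\) over one period, \(\osc\vartheta=\max_\Gamma\chi-\min_\Gamma\chi\). This is independent of the starting point because adding a constant does not change the oscillation. The bound in \Cref{prop:lift}(d) then becomes the stated condition. The only mildly delicate step is the period identification in (a), namely that the minimal period transforms correctly under the time change. This follows because the time change is a bijection between the two parametrizations, so first-return times correspond.
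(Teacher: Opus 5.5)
Your proposal is correct and follows essentially the same route as the paper: reparametrize $\Gamma$ by the clock density to identify $T^\tau_\Gamma$ and apply \Cref{thm:periodic}(i). Then use \Cref{thm:periodic}(iii) (equivalently \Cref{thm:structure}(iv)) together with uniqueness to identify $\chi(w(\tau))-\chi(w(0))$ with the function $\vartheta$ of \Cref{prop:lift}. The only cosmetic difference is that you use $\int\abs{D}\dd\sigma$ and then reverse orientation when $D<0$, whereas the paper uses the signed integral $\int D\dd\sigma$, which is a decreasing diffeomorphism when $D<0$ and directly yields a $\widehat X_s$-parametrization.
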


\begin{proof}
Let \(\gamma\) be a parametrization of \(\Gamma\) by \(Y_s\), with minimal period
\(T^\sigma_\Gamma\). Since \(\Gamma\) is connected and \(D\) does not vanish on
it, \(D\) has a constant sign \(\epsilon\in\set{\pm1}\) on \(\Gamma\), and the
function \(\tau(\sigma):=\int_0^\sigma D(\gamma(\sigma'))\dd\sigma'\) is a \(C^1\)
diffeomorphism of \(\R\), increasing if \(\epsilon=1\) and decreasing if
\(\epsilon=-1\), with \(\tau(\sigma+T^\sigma_\Gamma)=\tau(\sigma)+\epsilon T\), where
\(T:=\int_0^{T^\sigma_\Gamma}\abs{D(\gamma(\sigma))}\dd\sigma>0\). Denoting by
\(\sigma(\tau)\) its inverse and setting \(w_0(\tau):=\gamma(\sigma(\tau))\), we
obtain from \eqref{eq:YDX}
\[
w_0'(\tau)=\sigma'(\tau)\,Y_s\bigl(w_0(\tau)\bigr)
=\frac{Y_s(w_0(\tau))}{D(w_0(\tau))}=\widehat X_s\bigl(w_0(\tau)\bigr),
\]
so that \(w_0\) is a parametrization of \(\Gamma\) by \(\widehat X_s\). Moreover
\(w_0(\tau)=w_0(\tau')\) if and only if \(\sigma(\tau)-\sigma(\tau')\in
T^\sigma_\Gamma\mathbb Z\), that is, if and only if \(\tau-\tau'\in T\mathbb Z\);
hence \(\Gamma\) is a periodic orbit of \(\widehat X_s\) of minimal period
\(T=T^\tau_\Gamma\). By \Cref{thm:periodic}(i), applied to the field \(Y_s\) and
the cost \(\abs{D}\), the critical value of \((\Gamma,Y_s,\abs{D})\) is
\(\frac1{T^\sigma_\Gamma}\int_0^{T^\sigma_\Gamma}\abs{D(\gamma(\sigma))}\dd\sigma
=T^\tau_\Gamma/T^\sigma_\Gamma\), which proves the first assertion.

For the second assertion we apply \Cref{thm:periodic} to the field
\(\widehat X_s\), which is smooth on the open set \(U=\Sph^2\cap\set{D\neq0}\)
containing \(\Gamma\), and to the cost \(\rho=\kappa/D\), which is continuous on
\(\Gamma\). The critical value of \((\Gamma,\widehat X_s,\rho)\) is the average of
\(\rho\) over one period of any parametrization of \(\Gamma\) by \(\widehat X_s\),
which is \(\ov\rho_\Gamma\) as defined in \Cref{prop:lift}; this average does
not depend on the parametrization, since two of them differ by a translation
of the time variable. Now let \(v\) be an abnormal trajectory of \(X_s\) with
\(v(0)\) on the cone over \(\Gamma\), and set \(r_0=\abs{v(0)}\),
\(w(0)=v(0)/r_0\in\Gamma\). Let \(w\) be the trajectory of \(\widehat X_s\) through
\(w(0)\), which is defined on \(\R\) and runs along \(\Gamma\). By
\Cref{prop:lift} and uniqueness of solutions, \(v\) coincides with the curve
\(r(\tau(t))\,w(\tau(t))\) constructed there, and
\(\log r(\tau)=\log r_0+\ov\rho_\Gamma\tau+\vartheta(\tau)\) with
\(\vartheta(\tau)=\int_0^\tau(\rho(w(s))-\ov\rho_\Gamma)\dd s\). On the other hand,
let \(\chi\) be a critical subsolution. By \Cref{thm:periodic}(iii), \(\chi\)
coincides on \(\Gamma\) with the corrector \eqref{eq:chiGamma} built from the
parametrization \(w\), up to an additive constant, so that
\[
\chi(w(\tau))-\chi(w(0))=\int_0^\tau\bigl(\rho(w(s))-\ov\rho_\Gamma\bigr)\dd s
=\vartheta(\tau)\qquad\text{for all }\tau\in\R .
\]
Substituting this identity in the expression of \(\log r\) gives the displayed
formula. In particular \(\tau\mapsto\log r(\tau)-\ov\rho_\Gamma\tau\) equals
\(\chi(w(\tau))\) plus the constant \(\log r_0-\chi(w(0))\), so that, along the
trajectory, \(\chi|_\Gamma\) is the periodic part of \(\log\abs{v}\) up to an
additive constant. The trichotomy follows from \Cref{prop:lift}(a)--(c),
whose hypotheses only involve the sign of \(\ov\rho_\Gamma\). Finally, since
\(w\) maps \([0,T^\tau_\Gamma]\) onto \(\Gamma\), the function \(\vartheta\) on one
period takes exactly the values of \(\chi-\chi(w(0))\) on \(\Gamma\), and
therefore \(\osc\vartheta=\osc_\Gamma\chi\). The quantity \(\osc_\Gamma\chi\)
depends neither on the additive constant in \(\chi\) nor on the point \(w(0)\),
and the condition of \Cref{prop:lift}(d) becomes
\(r_0\le m_\Gamma^{-1/3}e^{-\osc_\Gamma\chi}\).
\end{proof}

\begin{theorem}[Selection]\label{thm:selection}
Let \(K=\Gamma_1\cup\dots\cup\Gamma_m\cup E\), where the \(\Gamma_i\) are pairwise
disjoint periodic orbits of \(Y\), not reduced to points, and \(E\) is a
finite set of equilibria of \(Y\) disjoint from them. Let \(\ell\in C(K)\) and
denote by \(\ov\ell_{\Gamma_i}\) the average of \(\ell\) with respect to the
invariant probability measure \(\mu_{\Gamma_i}\) of \Cref{thm:periodic}. Then
\(\cS\neq\emptyset\),
\[
c=c(K,Y,\ell)=\min\Bigl(\set{\ov\ell_{\Gamma_i}:\ 1\le i\le m}\cup\set{\ell(e):\ e\in E}\Bigr),
\]
the Mather measures are the convex combinations of the measures
\(\mu_{\Gamma_i}\) with \(\ov\ell_{\Gamma_i}=c\) and of the Dirac masses \(\delta_e\)
with \(\ell(e)=c\), and \(\wt\cA_s=\wt\cM_s\) is the union of the orbits
\(\Gamma_i\) with \(\ov\ell_{\Gamma_i}=c\) and of the equilibria \(e\in E\) with
\(\ell(e)=c\).
\end{theorem}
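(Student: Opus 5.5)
The plan is to split the problem along the finitely many disjoint compact invariant pieces of \(K\), namely the orbits \(\Gamma_i\) and the points \(e\in E\). First I will describe \(\cM_Y(K)\).

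\emph{Invariant measures.} Each piece is compact, invariant, and the pieces are pairwise disjoint, so each is relatively open in \(K\). For \(\mu\in\cM_Y(K)\), the restriction of \(\mu\) to a piece is again invariant, since the flow preserves each piece. By \Cref{thm:periodic}(i) the restriction to \(\Gamma_i\) is \(\mu(\Gamma_i)\mu_{\Gamma_i}\), and the restriction to \(\set{e}\) is trivially \(\mu(\set{e})\delta_e\). Hence \(\cM_Y(K)\) is exactly the simplex of convex combinations of the \(\mu_{\Gamma_i}\) and the \(\delta_e\). Integrating \(\ell\) is affine on this simplex. So \(c\) is the stated minimum, and the minimizers are the convex combinations of the vertices achieving it. The Mather set is then the union of the supports of these vertices: \(\supp\mu_{\Gamma_i}=\Gamma_i\) by the proof of \Cref{thm:periodic}, and \(\supp\delta_e=\set{e}\).

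\emph{A strict critical subsolution.} This is the main step. Choose pairwise disjoint open neighbourhoods \(V_i\) of \(\Gamma_i\) and \(V_e\) of \(e\), and cutoffs \(\theta_i,\theta_e\in C^\infty_c\) supported in them and equal to one near the respective piece.
\begin{itemize}
\item On \(\Gamma_i\), take the \(C^1\) extension \(\chi_{\Gamma_i}\) of \Cref{thm:periodic}(ii), built for the cost \(\ell\). It satisfies \(d\chi_{\Gamma_i}(Y)=\ell-\ov\ell_{\Gamma_i}\) on \(\Gamma_i\).
\item On each equilibrium, \(Y(e)=0\), so any function gives \(d\chi(Y)(e)=0\).
\end{itemize}
Set \(\chi_*=\sum_i\theta_i\chi_{\Gamma_i}\in C^1(M)\). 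Near \(\Gamma_i\) we have \(\chi_*=\chi_{\Gamma_i}\), so on \(\Gamma_i\)
\[
g_{\chi_*}=\ell-c-(\ell-\ov\ell_{\Gamma_i})=\ov\ell_{\Gamma_i}-c\ge0 ,
\]
with equality exactly when \(\ov\ell_{\Gamma_i}=c\). At \(e\in E\), \(g_{\chi_*}(e)=\ell(e)-c\ge0\). Thus \(\chi_*\in\cS\), so \(\cS\neq\emptyset\). Moreover \(\cZ(\chi_*)\) is precisely the union of the minimizing orbits and equilibria. The only delicate point is that derivatives along \(Y\) on \(\Gamma_i\) depend only on \(\chi_*|_{\Gamma_i}\), because \(Y\) is tangent there; the cutoffs equal one near the pieces, so they contribute nothing.

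\emph{Aubry set.} By \Cref{thm:structure}(ii), \(\wt\cA_s=\bigcap_{\chi\in\cS}\cZ(\chi)\subset\cZ(\chi_*)\), which is the minimizing union \(\wt\cM_s\). By \Cref{thm:structure}(iii), \(\wt\cM_s\subset\wt\cA_s\). Hence \(\wt\cA_s=\wt\cM_s\) equals the stated union, which completes the proof.
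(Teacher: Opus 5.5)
Your proposal is correct and follows the paper's proof essentially step by step: you decompose invariant measures over the disjoint invariant pieces via \Cref{thm:periodic}(i), glue the cut-off correctors of \Cref{thm:periodic}(ii) into a critical subsolution whose contact set is exactly the minimizing union, and conclude with \Cref{thm:structure}(ii)--(iii). The neighbourhoods \(V_e\) and cutoffs \(\theta_e\) are superfluous, since \(Y(e)=0\) gives \(g_{\chi_*}(e)=\ell(e)-c\) for any \(\chi_*\).
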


\begin{proof}
The set \(K\) is the disjoint union of the finitely many compact sets
\(\Gamma_1,\dots,\Gamma_m\) and \(\set{e}\), \(e\in E\), each of which is
invariant under the flow \(\Phi_t\) of \(Y\); in particular \(K\) is compact and
invariant. Let \(\mu\in\cM_Y(K)\). If \(P\) is one of these pieces and \(A\) a
Borel set, then \(\Phi_t^{-1}(A\cap P)=\Phi_t^{-1}(A)\cap P\) by invariance of
\(P\), so that the restriction \(\mu|_P\) is again invariant. Writing
\(a_i=\mu(\Gamma_i)\) and \(b_e=\mu(\set{e})\), we have \(\sum_ia_i+\sum_eb_e=1\);
whenever \(a_i>0\), the normalized restriction \(a_i^{-1}\mu|_{\Gamma_i}\) is an
invariant probability measure on \(\Gamma_i\), hence equal to \(\mu_{\Gamma_i}\)
by \Cref{thm:periodic}(i). Therefore
\[
\mu=\sum_{i=1}^ma_i\,\mu_{\Gamma_i}+\sum_{e\in E}b_e\,\delta_e ,
\]
and conversely every such convex combination is invariant, each of its terms
being invariant. For such \(\mu\),
\[
\int_K\ell\dd\mu=\sum_ia_i\,\ov\ell_{\Gamma_i}+\sum_eb_e\,\ell(e)\ge c_0,
\qquad
c_0:=\min\Bigl(\set{\ov\ell_{\Gamma_i}}\cup\set{\ell(e)}\Bigr),
\]
with equality if and only if all the weights \(a_i\) and \(b_e\) attached to
pieces whose average is larger than \(c_0\) vanish. Since the minimum \(c_0\) is
attained by a single \(\mu_{\Gamma_i}\) or \(\delta_e\), we conclude that
\(c=c_0\) and that the Mather measures are exactly the convex combinations
described in the statement. As \(\supp\mu_{\Gamma_i}=\Gamma_i\), by the proof of
\Cref{thm:periodic}, and \(\supp\delta_e=\set{e}\), the Mather set \(\wt\cM_s\),
which is the closure of the union of the supports of Mather measures, is the
union of the minimizing orbits and equilibria; this union is finite, hence
closed.

We now construct a critical subsolution. The pieces of \(K\) being compact and
pairwise disjoint, we can choose open sets \(V_1,\dots,V_m\) with pairwise
disjoint closures, such that \(\Gamma_i\subset V_i\) and \(\ov V_i\) does not meet
\(K\setminus\Gamma_i\). For each \(i\), \Cref{thm:periodic}(ii), applied to the
periodic orbit \(\Gamma_i\) and to \(\ell|_{\Gamma_i}\), provides a \(C^1\) function
\(\chi_i\) on \(M\) with \(d\chi_i(Y)=\ell-\ov\ell_{\Gamma_i}\) on \(\Gamma_i\);
multiplying it by a cutoff function supported in \(V_i\) and equal to one near
\(\Gamma_i\), we may assume that \(\supp\chi_i\subset V_i\), which does not change
\(d\chi_i\) on \(\Gamma_i\). Set \(\chi:=\chi_1+\dots+\chi_m\in C^1(M)\). On
\(\Gamma_i\), all the functions \(\chi_j\) with \(j\neq i\) vanish in a
neighbourhood, so that
\[
g_\chi=\ell-c-d\chi(Y)=\ell-c-\bigl(\ell-\ov\ell_{\Gamma_i}\bigr)=\ov\ell_{\Gamma_i}-c\ge0
\qquad\text{on }\Gamma_i .
\]
At an equilibrium \(e\in E\) we have \(Y(e)=0\), hence \(d\chi(Y)(e)=0\) and
\(g_\chi(e)=\ell(e)-c\ge0\). Thus \(\chi\in\cS\), and the contact set
\(\cZ(\chi)\), on which \(g_\chi\) vanishes, is the union of the orbits
\(\Gamma_i\) with \(\ov\ell_{\Gamma_i}=c\) and of the equilibria with \(\ell(e)=c\),
that is, \(\cZ(\chi)=\wt\cM_s\). Since \(\cS\neq\emptyset\), \Cref{thm:structure}
gives \(\wt\cM_s\subset\wt\cA_s\subset\cZ(\chi)\), and therefore
\(\wt\cA_s=\wt\cM_s\).
\end{proof}

\begin{corollary}\label{cor:pairs}
Let \(\Gamma\) be a regular periodic abnormal component. Then \(-\Gamma\) is a
regular periodic abnormal component, and, for every \(\ell\in C(\Gamma\cup-\Gamma)\)
and for both choices \(Y=\widehat X_s\) and \(Y=Y_s\),
\[
\ov\ell_{-\Gamma}=\ov{(\ell\circ\mathsf a)}_\Gamma ,
\]
where the averages are taken with respect to the invariant probability
measures of the flow of \(Y\) on \(-\Gamma\) and on \(\Gamma\). In particular
\(\ov\ell_{-\Gamma}=\ov\ell_\Gamma\) if \(\ell\) is even (\(\ell\circ\mathsf a=\ell\)),
as for \(\rho\) and \(\abs{D}\), and \(\ov\ell_{-\Gamma}=-\ov\ell_\Gamma\) if \(\ell\) is
odd, as for \(\kappa\). Consequently, if \(\ell\) is even and \(K\) is as in
\Cref{thm:selection} with both \(\Gamma\) and \(-\Gamma\) among its periodic
orbits, then \(\Gamma\subset\wt\cA_s\) if and only if \(-\Gamma\subset\wt\cA_s\).
\end{corollary}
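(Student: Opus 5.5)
We first check that \(-\Gamma\) is again a regular periodic abnormal component. By \Cref{prop:antipodal}(i), \(D''\) and \(D\) are odd, so \(\mathsf a\) maps \(\Sigma_S\) to itself and \(\set{D\neq0}\) to itself. Being a homeomorphism of \(\Sph^2\), \(\mathsf a\) sends the connected component \(\Gamma\) of \(\Sigma_S\) onto a connected component \(-\Gamma\). Regularity is preserved because \(\nabla D''(-v)=\nabla D''(v)\) (the gradient of an odd quintic is even). Since \(Y_s(-v)=Y_s(v)\), \(Y_s\) does not vanish on \(-\Gamma\). This was already observed after \Cref{def:regularcomponent}.

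The core of the argument is the transport of parametrizations. Let \(\gamma\) parametrize \(\Gamma\) by \(Y\) with minimal period \(T\).

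For \(Y=\widehat X_s\), \Cref{prop:antipodal}(i) shows that \(\tilde\gamma:=-\gamma\) is a trajectory of \(\widehat X_s\). Indeed, \(\tilde\gamma'=-\widehat X_s(\gamma)=\widehat X_s(-\gamma)\). Because \(\mathsf a\) is injective, \(\tilde\gamma\) has the same minimal period \(T\).

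For \(Y=Y_s\), \Cref{prop:antipodal}(i) gives the trajectory \(\tilde\gamma(\sigma):=-\gamma(-\sigma)\), again with minimal period \(T\).

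In both cases \Cref{thm:periodic}(i) expresses the unique invariant measure of \(-\Gamma\) as the time average along \(\tilde\gamma\). Hence
\[
\ov\ell_{-\Gamma}=\frac1T\int_0^T\ell(\tilde\gamma(s))\dd s .
\]
With \(\tilde\gamma(s)=\mathsf a(\gamma(\pm s))\), this equals \(T^{-1}\int_0^T(\ell\circ\mathsf a)(\gamma(\pm s))\dd s\). In the case of the minus sign, the substitution \(s\mapsto -s\) and \(T\)-periodicity turn the integral over \([-T,0]\) into the integral over \([0,T]\). In both cases we obtain \(\ov{(\ell\circ\mathsf a)}_\Gamma\). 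This time reversal for \(Y_s\) is the only point needing care, and it is harmless because averages over a full period are insensitive to orientation (compare the computation of \(\lambda_{-\Gamma}\) in \Cref{thm:multiplier}).

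The parity claims follow from homogeneity, using \(f(-v)=(-1)^kf(v)\) for a homogeneous polynomial of degree \(k\):
\begin{itemize}
\item \(\abs{D}\) is even, since \(D\) is odd.
\item \(\kappa=\pair{Z_s(v)}{v}\) is homogeneous of degree seven, hence odd.
\item \(\rho=\kappa/D\) is a ratio of two odd functions, hence even.
\end{itemize}
For even \(\ell\) this gives \(\ov\ell_{-\Gamma}=\ov\ell_\Gamma\), and for odd \(\ell\) it gives \(\ov\ell_{-\Gamma}=-\ov\ell_\Gamma\).

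Finally, let \(K\) be as in \Cref{thm:selection}. That theorem identifies \(\wt\cA_s\) as the union of the pieces whose average cost equals \(c\). If \(\ell\) is even, \(\Gamma\) and \(-\Gamma\) have the same average, so either both belong to \(\wt\cA_s\) or neither does. Here \(\ell\) need only be even on \(\Gamma\cup-\Gamma\).
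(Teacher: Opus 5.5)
Your proposal is correct and follows the paper's proof essentially step by step. It uses the same antipodal transport of parametrizations (\(-w\) for \(\widehat X_s\), \(\sigma\mapsto-\gamma(-\sigma)\) for \(Y_s\)) combined with \Cref{thm:periodic}(i), the same parity checks, and the same final appeal to \Cref{thm:selection}; the only cosmetic difference is that you obtain the evenness of \(\rho\) as the ratio \(\kappa/D\) of two odd functions, whereas the paper reads it off directly from the oddness of \(X_s\).
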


\begin{proof}
Since \(D''\) is odd, \(\Sigma_S\) is invariant under \(\mathsf a\), and \(\mathsf a\),
being a homeomorphism of \(\Sph^2\) preserving \(\Sigma_S\), maps connected
components of \(\Sigma_S\) onto connected components; thus \(-\Gamma\) is a
connected component of \(\Sigma_S\). The gradient of the odd polynomial \(D''\)
is even, so \(\nabla D''(-v)=\nabla D''(v)\neq0\) for \(v\in\Gamma\), and every
point of \(-\Gamma\) is regular. Moreover \(D(-v)=-D(v)\neq0\) and, by
\Cref{prop:antipodal}(i), \(Y_s(-v)=Y_s(v)\neq0\) for \(v\in\Gamma\). Hence
\(-\Gamma\) is a regular periodic abnormal component, and it is a periodic orbit
of both \(Y_s\) and \(\widehat X_s\).

Consider first \(Y=\widehat X_s\), and let \(w\) be a parametrization of \(\Gamma\)
by \(\widehat X_s\), of minimal period \(T\). By \Cref{prop:antipodal}(i), the
curve \(-w\) is a trajectory of \(\widehat X_s\) running along \(-\Gamma\); since
\(\mathsf a\) is a bijection, \(-w(\tau)=-w(\tau')\) if and only if
\(w(\tau)=w(\tau')\), so that \(-w\) has the same minimal period \(T\). By
\Cref{thm:periodic}(i),
\[
\ov\ell_{-\Gamma}=\frac1T\int_0^T\ell\bigl(-w(\tau)\bigr)\dd\tau
=\frac1T\int_0^T(\ell\circ\mathsf a)\bigl(w(\tau)\bigr)\dd\tau
=\ov{(\ell\circ\mathsf a)}_\Gamma .
\]
Consider now \(Y=Y_s\), and let \(\gamma\) be a parametrization of \(\Gamma\) by
\(Y_s\), of minimal period \(T\). By \Cref{prop:antipodal}(i), the curve
\(\tilde\gamma(\sigma):=-\gamma(-\sigma)\) is a trajectory of \(Y_s\) running along
\(-\Gamma\), and, as before, it has minimal period \(T\). Using the change of
variables \(s=-\sigma\) and the \(T\)-periodicity of \(\gamma\),
\[
\ov\ell_{-\Gamma}=\frac1T\int_0^T\ell\bigl(-\gamma(-\sigma)\bigr)\dd\sigma
=\frac1T\int_{-T}^0(\ell\circ\mathsf a)\bigl(\gamma(s)\bigr)\dd s
=\frac1T\int_0^T(\ell\circ\mathsf a)\bigl(\gamma(s)\bigr)\dd s
=\ov{(\ell\circ\mathsf a)}_\Gamma .
\]
If \(\ell\) is even, then \(\ell\circ\mathsf a=\ell\) and the two averages
coincide; if \(\ell\) is odd, then \(\ell\circ\mathsf a=-\ell\) and they are
opposite. The functions \(\abs{D}\) and \(\rho\) are even, because \(D\) is odd
and \(\rho(-w)=\pair{X_s(-w)}{-w}=\pair{-X_s(w)}{-w}=\rho(w)\), while
\(\kappa(-v)=\pair{Z_s(-v)}{-v}=-\kappa(v)\) since \(Z_s\) is even.

Finally, let \(\ell\) be even and \(K\) as in \Cref{thm:selection}, with both
\(\Gamma\) and \(-\Gamma\) among its periodic orbits. By \Cref{thm:selection},
\(\Gamma\subset\wt\cA_s\) if and only if \(\ov\ell_\Gamma=c(K,Y,\ell)\), and
\(-\Gamma\subset\wt\cA_s\) if and only if \(\ov\ell_{-\Gamma}=c(K,Y,\ell)\). These
two conditions are equivalent, because \(\ov\ell_{-\Gamma}=\ov\ell_\Gamma\).
\end{proof}

We now show that the averages of the algebraic cost \(k=\cK-5\kappa\) of
\Cref{thm:multiplier} govern the transverse stability of the abnormal locus
near any compact invariant set, not only near a single periodic component.

\begin{proposition}\label{prop:transverse}
Let \(K\subset\Sigma_S\) be compact and invariant under \(Y_s\), and let
\[
\lambda^+(K):=\max_{\mu\in\cM_{Y_s}(K)}\int_Kk\dd\mu=-c(K,Y_s,-k).
\]
If \(\lambda^+(K)<0\), there are a neighbourhood \(V\) of \(K\), a time \(T>0\)
and a constant \(a>0\) such that \(\abs{D''(\Phi_T(v))}\le e^{-aT}\abs{D''(v)}\) for
every \(v\in V\); consequently \(\abs{D''(\Phi_{nT}(v))}\le e^{-naT}\abs{D''(v)}\)
as long as \(\Phi_{jT}(v)\in V\) for \(0\le j<n\). The maximizing measures are
the Mather measures of \((K,Y_s,-k)\). If \(K=\Gamma_1\cup\dots\cup\Gamma_m\cup E\) is
as in \Cref{thm:selection}, then
\[
\lambda^+(K)=\max\Bigl(\set{\lambda_{\Gamma_i}/T^\sigma_{\Gamma_i}}\cup\set{k(e):\ e\in E}\Bigr),
\]
and, for a regular equilibrium \(e\), \(k(e)\) is the eigenvalue of the
linearization of \(Y_s\) at \(e\) in the direction transverse to \(\Sigma_S\).
\end{proposition}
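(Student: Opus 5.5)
The plan is to combine the integrated Darboux identity of \Cref{cor:sphereinvariance} with the uniform Birkhoff bound of \Cref{thm:duality}(ii), applied to the cost \(-k\). The identity \(\lambda^+(K)=-c(K,Y_s,-k)\) is the definition of the critical value, since \(\max\int k\dd\mu=-\min\int(-k)\dd\mu\). The same sign change shows that the maximizing measures are exactly the Mather measures of \((K,Y_s,-k)\).

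\emph{Step 1: a uniform contraction time on \(K\).} By \Cref{thm:duality}(ii) for \((K,Y_s,-k)\),
\[
\frac1T\max_{v\in K}\int_0^Tk(\Phi_sv)\dd s\;\longrightarrow\;\lambda^+(K)<0 .
\]
Set \(a:=-\lambda^+(K)/2>0\). Choose \(T\) with
\(\max_{v\in K}\int_0^Tk(\Phi_sv)\dd s\le-\tfrac32 aT\).

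\emph{Step 2: extension to a neighbourhood.} The function \(v\mapsto\int_0^Tk(\Phi_sv)\dd s\) is continuous on \(\Sph^2\), because the flow and \(k\) are continuous and \(T\) is fixed. Hence there is an open neighbourhood \(V\) of the compact set \(K\) on which this integral is at most \(-aT\). For \(v\in V\), \Cref{cor:sphereinvariance} then gives
\[
\abs{D''(\Phi_Tv)}=\abs{D''(v)}\exp\int_0^Tk(\Phi_sv)\dd s\le e^{-aT}\abs{D''(v)} .
\]
The iterated estimate follows by induction along \(\Phi_{jT}(v)\in V\).

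\emph{Step 3: the case of periodic orbits and equilibria.} When \(K\) is as in \Cref{thm:selection}, that theorem applied to the cost \(-k\) gives \(c(K,Y_s,-k)=\min\bigl(\{-\ov k_{\Gamma_i}\}\cup\{-k(e)\}\bigr)\). Moreover \(\ov k_{\Gamma_i}=\lambda_{\Gamma_i}/T^\sigma_{\Gamma_i}\) by \Cref{thm:periodic}(i) and the definition in \Cref{thm:multiplier}. The formula for \(\lambda^+(K)\) follows.

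For a regular equilibrium \(e\), differentiate \(\cL_{Y_s}D''=kD''\) at \(e\). Since \(Y_s(e)=0\) and \(D''(e)=0\), this gives \(dD''(e)\circ DY_s(e)=k(e)\,dD''(e)\) on \(T_e\Sph^2\). So \(dD''(e)\neq0\) is a left eigenvector of the linearization with eigenvalue \(k(e)\). The tangent line \(T_e\Sigma_S=\ker dD''(e)\) is invariant, and the induced map on the quotient \(T_e\Sph^2/T_e\Sigma_S\), which is the transverse direction, is multiplication by \(k(e)\).

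The main obstacle is Step 2: transferring the estimate from \(K\) to \(V\), and interpreting it. We need continuity of finite-time integrals, not of invariant measures. The bound only holds while the orbit stays in \(V\), which is why the statement is restricted to that case. One must also avoid needing \(\Phi_T(V)\subset V\); the stated iteration hypothesis sidesteps this.
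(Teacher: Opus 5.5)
Your proposal is correct and follows the paper's proof essentially step for step. The shared steps are the sign change for \(\lambda^+(K)\) and the maximizers, then \Cref{thm:duality}(ii) with cost \(-k\) to get a uniform \(T\) with \(\int_0^T k\circ\Phi_s\dd s\le-\tfrac32 aT\) on \(K\), then continuity to pass to a neighbourhood \(V\), the integrated Darboux identity of \Cref{cor:sphereinvariance}, and finally the left-eigenvector computation at a regular equilibrium. The differences are cosmetic: you quote the formula of \Cref{thm:selection} for the cost \(-k\) rather than redoing the extremal-measure argument, and you deduce the invariance of \(T_e\Sigma_S=\ker dD''(e)\) from the eigenvector identity rather than from the invariance of \(\Sigma_S\).
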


\begin{proof}
The identity \(\lambda^+(K)=-c(K,Y_s,-k)\) is the definition of the critical
value applied to the cost \(-k\), and the maximizing measures of \(\int k\dd\mu\)
are the minimizing measures of \(\int(-k)\dd\mu\), that is, the Mather
measures of \((K,Y_s,-k)\). Assume \(\lambda^+(K)<0\) and set
\(a:=-\lambda^+(K)/2>0\). By \Cref{thm:duality}(ii) applied to the cost \(-k\),
\[
\sup_{T>0}\frac1T\min_{v\in K}\int_0^T-k(\Phi_s v)\dd s=c(K,Y_s,-k)=2a,
\]
so there is \(T>0\) with \(\int_0^Tk(\Phi_sv)\dd s<-\tfrac32aT\) for every
\(v\in K\). The function \(v\mapsto\int_0^Tk(\Phi_sv)\dd s\) is continuous on
\(\Sph^2\), the flow being continuous on \([0,T]\times\Sph^2\), so the strict
inequality \(\int_0^Tk(\Phi_sv)\dd s<-aT\) holds on an open neighbourhood \(V\)
of the compact set \(K\). For \(v\in V\), \Cref{cor:sphereinvariance} gives
\[
\abs{D''(\Phi_T(v))}=\abs{D''(v)}\exp\int_0^Tk(\Phi_sv)\dd s\le e^{-aT}\abs{D''(v)},
\]
and the iterated estimate follows by induction. When \(K\) is as in
\Cref{thm:selection}, the invariant measures are the convex combinations of
the measures \(\mu_{\Gamma_i}\) and \(\delta_e\), and \(\int k\dd\mu_{\Gamma_i}=
\lambda_{\Gamma_i}/T^\sigma_{\Gamma_i}\) by \Cref{thm:periodic}(i); the maximum of the
linear function \(\mu\mapsto\int k\dd\mu\) is attained at one of these
extremal measures, which gives the formula. Finally, let \(e\in\Sigma_S\) be a
regular equilibrium. Then \(Y_s(e)=0\), the linearization \(L=DY_s(e)\) leaves
\(T_e\Sigma_S\) invariant because \(\Sigma_S\) is invariant near \(e\), and
differentiating \(dD''(Y_s)=kD''\) at \(e\) gives \(dD''(e)\circ L=k(e)\,dD''(e)\), since
\(D''(e)=0\). Thus \(dD''(e)\), which is nonzero and vanishes on
\(T_e\Sigma_S\), is a left eigenvector of \(L\) with eigenvalue \(k(e)\): this is the
eigenvalue of \(L\) on the quotient \(T_e\Sph^2/T_e\Sigma_S\), that is, in the
direction transverse to \(\Sigma_S\).
\end{proof}

\begin{remark}
\Cref{prop:transverse} shows that the stability of the abnormal locus near a
compact invariant set is decided by a singular Mather problem with the
algebraic cost \(k=\cK-5\kappa\): the least stable invariant objects are the
Mather set of \((K,Y_s,-k)\). The uniformity of the estimate, even when \(K\)
carries many invariant measures, comes from the duality formula of
\Cref{thm:duality}, and is the point where the Aubry--Mather framework of
\Cref{sec:AMstructures} is used in an essential way.
\end{remark}

%\textcolor{red}{\bf RIPRENDERE DA QUI : Chiedere a Claude di dettagliare le dimostrazioni}

\subsection{Heteroclinic cycles and the existence of correctors}

When \(K\) contains orbits which are neither periodic nor stationary,
critical subsolutions need not exist. The obstruction is concentrated on
heteroclinic connections.

\begin{proposition}\label{prop:heteroclinic}
Let \(c=c(K,Y,\ell)\). Let \(e_1,\dots,e_m\in K\) be equilibria and
\(\eta_1,\dots,\eta_m\subset K\) orbits such that \(\eta_j\) is heteroclinic from
\(e_j\) to \(e_{j+1}\) (indices modulo \(m\)), that is,
\(\eta_j(s)\to e_j\) as \(s\to-\infty\) and \(\eta_j(s)\to e_{j+1}\) as
\(s\to+\infty\). Assume that \(\ell(e_j)=c\) for all \(j\) and that the improper
integrals
\[
I_j:=\int_{-\infty}^{+\infty}\bigl(\ell(\eta_j(s))-c\bigr)\dd s
\]
converge. Then:
\begin{enumerate}[label=\textnormal{(\roman*)}]
\item if \(\cS\neq\emptyset\), then \(\sum_jI_j\ge0\); equivalently, if
\(\sum_jI_j<0\) there is no \(C^1\) critical subsolution, and the supremum in
\Cref{thm:duality}(i) is not attained;
\item if \(\cS\neq\emptyset\) and \(\sum_jI_j=0\), the cycle
\(\cC:=\set{e_1,\dots,e_m}\cup\eta_1\cup\dots\cup\eta_m\) is contained in
\(\wt\cA_s\); if moreover \(K=\cC\), then \(\wt\cM_s=\set{e_1,\dots,e_m}\), so
that \(\wt\cM_s\subsetneq\wt\cA_s=\cC\).
\end{enumerate}
\end{proposition}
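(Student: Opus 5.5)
The plan is to integrate the subsolution inequality along each heteroclinic orbit and telescope around the cycle.

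\emph{Part (i).} Fix \(\chi\in\cS\), so that \(g_\chi=\ell-c-d\chi(Y)\ge0\) on \(K\). Along \(\eta_j\) we have \(\frac{d}{ds}\chi(\eta_j(s))=\ell(\eta_j(s))-c-g_\chi(\eta_j(s))\). Integrating over \([-S,S]\) gives
\[
\chi(\eta_j(S))-\chi(\eta_j(-S))=\int_{-S}^{S}\bigl(\ell(\eta_j)-c\bigr)\dd s-\int_{-S}^{S}g_\chi(\eta_j)\dd s .
\]
Let \(S\to\infty\). Continuity of \(\chi\) sends the left-hand side to \(\chi(e_{j+1})-\chi(e_j)\), and convergence of \(I_j\) sends the first integral to \(I_j\). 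Hence \(G_j:=\int_{\R}g_\chi(\eta_j)\dd s\) converges, is nonnegative, and satisfies \(\chi(e_{j+1})-\chi(e_j)=I_j-G_j\). Summing over \(j\) modulo \(m\), the left-hand sides telescope to zero, so \(\sum_jI_j=\sum_jG_j\ge0\). By contraposition, \(\sum_jI_j<0\) excludes any \(\chi\in\cS\). Since attaining the supremum in \Cref{thm:duality}(i) means exactly \(\min_K g_\chi\ge0\) for some \(\chi\), that supremum is not attained.

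\emph{Part (ii), inclusion of the cycle.} If \(\sum_jI_j=0\), then every \(G_j=0\). Since \(g_\chi\ge0\) is continuous, \(g_\chi=0\) along each \(\eta_j\), and \(g_\chi=0\) on each \(\eta_j\) holds for every \(\chi\in\cS\). The equilibria are then handled either by passing to the limit along \(\eta_j\) (continuity of \(g_\chi\)) or directly: \(Y(e_j)=0\) gives \(g_\chi(e_j)=\ell(e_j)-c=0\). Thus \(\cC\subset\bigcap_{\chi\in\cS}\cZ(\chi)=\wt\cA_s\) by \Cref{thm:structure}(ii). One can also check calibration directly: integrating \(g_\chi=0\) along each orbit of \(\cC\) gives the calibration identity, and equilibria are trivially calibrated since \(\ell(e_j)=c\).

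\emph{Part (ii), the Mather set when \(K=\cC\).} This is the step I expect to need the most care: showing that every invariant probability measure on \(\cC\) is supported on the equilibria. I would argue by Poincaré recurrence, or directly. Let \(\mu\) be invariant and suppose \(\mu(\eta_j)>0\) for some \(j\). Pick a compact fundamental arc \(A=\eta_j([0,1])\). The sets \(\Phi_n(A)\), \(n\in\mathbb Z\), are pairwise disjoint up to endpoints and all have the same measure, and their union has mass at most \(1\). So \(\mu(A)=0\); since \(\eta_j=\bigcup_n\Phi_n(A)\), this gives \(\mu(\eta_j)=0\), a contradiction. Hence every invariant measure is a convex combination of the \(\delta_{e_j}\). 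Each such measure has \(\int\ell\dd\mu=c\), because \(\ell(e_j)=c\), so all of them are Mather measures and \(\wt\cM_s=\set{e_1,\dots,e_m}\). Finally, \(\wt\cA_s\subset K=\cC\) together with the inclusion above gives \(\wt\cA_s=\cC\). The inclusion \(\wt\cM_s\subsetneq\wt\cA_s\) is strict because each \(\eta_j\) is nonstationary, so it contains points that are not equilibria.
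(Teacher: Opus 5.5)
Your proof is correct and follows the paper's argument essentially step by step. You integrate $g_\chi\ge0$ along each $\eta_j$ and telescope around the cycle to get $\sum_jI_j=\sum_j\int_\R g_\chi(\eta_j)\dd s\ge0$; when the sum vanishes you deduce $g_\chi=0$ on $\cC$; and you show each $\eta_j$ carries no mass by translating a fundamental arc.

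The only point to tighten is that last step. The closed arcs $\eta_j([n,n+1])$ overlap at their endpoints, so to sum their masses you should use the half-open arcs $\eta_j([n,n+1))$ as the paper does (or only even $n$). You should also state explicitly that $s\mapsto\eta_j(s)$ is injective, which holds because a nonstationary orbit converging to an equilibrium cannot be periodic.
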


\begin{proof}
Let \(\chi\in\cS\) and fix \(j\). Since \(\eta_j\) is an orbit contained in \(K\) and
\(g_\chi=\ell-c-d\chi(Y)\ge0\) on \(K\), for \(a<b\) we have
\[
\chi(\eta_j(b))-\chi(\eta_j(a))=\int_a^bd\chi(Y)(\eta_j(s))\dd s
=\int_a^b\bigl(\ell(\eta_j(s))-c\bigr)\dd s-\int_a^bg_\chi(\eta_j(s))\dd s .
\]
As \(a\to-\infty\) and \(b\to+\infty\), the left-hand side tends to
\(\chi(e_{j+1})-\chi(e_j)\) by continuity of \(\chi\), the first integral on the
right tends to \(I_j\) by assumption, and the second one increases, by
monotone convergence, to \(\int_\R g_\chi(\eta_j(s))\dd s\), which is therefore
finite. We obtain
\[
\chi(e_{j+1})-\chi(e_j)=I_j-\varepsilon_j,\qquad
\varepsilon_j:=\int_\R g_\chi(\eta_j(s))\dd s\ge0 .
\]
Summing over \(j\) and using that the left-hand sides telescope to zero,
indices being taken modulo \(m\), we find \(\sum_j\varepsilon_j=\sum_jI_j\), so
\(\sum_jI_j\ge0\). Since \(\chi\in\cS\) if and only if
\(\min_K(\ell-d\chi(Y))\ge c\), and the supremum in \Cref{thm:duality}(i) equals
\(c\), the set \(\cS\) is nonempty exactly when this supremum is attained,
which proves (i).

Assume now \(\sum_jI_j=0\) and let \(\chi\in\cS\). Then \(\sum_j\varepsilon_j=0\) with
\(\varepsilon_j\ge0\), so every \(\varepsilon_j\) vanishes; since \(g_\chi\) is
continuous and nonnegative, \(g_\chi(\eta_j(s))=0\) for every \(s\in\R\). At the
equilibria, \(Y(e_j)=0\) gives \(g_\chi(e_j)=\ell(e_j)-c=0\). Thus
\(\cC\subset\cZ(\chi)\). Every point \(v\in\cC\) has its whole orbit in \(\cC\),
namely \(\set{e_j}\) or \(\eta_j\), so \(g_\chi\) vanishes along this orbit and
integrating the identity \(d\chi(Y)=\ell-c-g_\chi=\ell-c\) along it shows that
the orbit of \(v\) is \(\chi\)-calibrated. As \(\chi\in\cS\) is arbitrary,
\(\cC\subset\wt\cA_s\).

Assume finally \(K=\cC\), and let \(\mu\in\cM_Y(K)\). Each \(\eta_j\) is neither
stationary nor periodic, since it converges to an equilibrium which does not
belong to it, so \(s\mapsto\eta_j(s)\) is injective. The half-open arcs \(A_n:=\eta_j([n,n+1))\), \(n\in\mathbb Z\), are therefore
pairwise disjoint, their union is \(\eta_j\), and \(\Phi_n(A_0)=A_n\). By
invariance \(\mu(A_n)=\mu(A_0)\) for every \(n\), and since
\(\sum_{n\in\mathbb Z}\mu(A_n)=\mu(\eta_j)\le1\), we get \(\mu(A_0)=0\), hence
\(\mu(\eta_j)=0\). Consequently every invariant
probability measure on \(K\) is a convex combination
\(\sum_jb_j\delta_{e_j}\), and conversely every such combination is invariant.
For all of them \(\int\ell\dd\mu=\sum_jb_j\ell(e_j)=c\), so they are all Mather
measures, and the Mather set is the union of their supports,
\(\wt\cM_s=\set{e_1,\dots,e_m}\). Since \(\cC\subset\wt\cA_s\subset K=\cC\), we
conclude that \(\wt\cA_s=\cC\), which strictly contains \(\wt\cM_s\) because the
orbits \(\eta_j\) are not reduced to points.
\end{proof}

\begin{remark}\label{rem:circulation}
\Cref{prop:heteroclinic} identifies the natural circulation condition for
general compact invariant subsets of \(\Sigma_S\): the vanishing of
the circulation of \(\ell-c\) along periodic orbits is automatic for the
minimizing ones and irrelevant for the others, whereas along heteroclinic
cycles the relevant quantity is the total normalized action, which must be
nonnegative. Note also that an \emph{exact} solution of
\(d\chi(Y)=\ell-c\) can be continuous at an equilibrium \(e\) which is the limit
of a regular orbit only if \(\ell(e)=c\), since otherwise \(\chi\) grows
linearly along that orbit. The example of a heteroclinic cycle with \(\sum I_j=0\)
shows that the singular Aubry set may be strictly larger than the singular
Mather set.
\end{remark}

% =================================================================
\section{A numerical example}\label{sec:numerics}
% =================================================================

Let \(b=e_3=(0,0,1)\) and
\begin{equation}\label{eq:exampleH}
H(x,y,z)=
\begin{pmatrix}
-2x^3+2xy^2+2y^2z\\
x^2y-2xz^2+2z^3\\
-2x^3-x^2y-2xyz
\end{pmatrix}.
\end{equation}
Then \(A=\Lie{b}{H}=(2y^2,\,-4xz+6z^2,\,-2xy)\), \(C=\Lie{A}{b}=(0,\,4x-12z,\,0)\), and
\begin{align*}
D&=8y^2(x-3z),\\
D''&=-2\bigl(4x^4z-6x^3z^2-x^2y^3-4x^2y^2z+4xy^2z^2+4y^2z^3\bigr),\\
D'&=-4y\bigl(4x^4y+2x^3y^2+10x^3yz+16x^3z^2-2x^2y^2z-27x^2yz^2-40x^2z^3\\
&\qquad\qquad-2xy^4-8xy^3z-12xy^2z^2+8xz^4+4y^3z^2+24z^5\bigr),\\
\cK&=8y(x-3z)\bigl(3x^2y+8x^2z-8xz^2-8z^3\bigr).
\end{align*}
The identity \(\cL_{Z_s}D''=\cK D''\) of \Cref{thm:darboux} has been checked
symbolically. The set \(\set{D=0}\cap\Sph^2\) is the union of the great circles
\(\set{y=0}\) and \(\set{x=3z}\).

The curve \(\Sigma_S\) is shown in \Cref{fig:example}(a). Its points
\(\pm e_2\) and \(\pm e_3\) are singular (\(\nabla D''=0\)), so
\Cref{prop:antipodal}(iii) does not apply directly; nevertheless the picture
is the expected one: \(\Sigma_S\) consists of an antipodally symmetric
component, on which \(D\) vanishes, and of one pair of regular components
\(\pm\Gamma\) on which \(D\neq0\). A search for the zeros of \(Y_s\) on \(\Sigma_S\)
(least-squares solution of \(D''=0\), \(v\times Z_s(v)=0\), \(\abs{v}=1\) from
\(4000\) random initial guesses) gives the equilibria \(\pm e_1,\pm e_2,\pm e_3\),
where \(Z_s=0\) and \(D=0\), and the two pairs \(\pm p_a\), \(\pm p_b\) listed in
\Cref{tab:equilibria}, where \(D\neq0\). By \Cref{prop:antipodal}(i) and
\eqref{eq:YDX}, the rays through \(\pm p_a\), \(\pm p_b\) are invariant under
\(X_s\), and the abnormal trajectories on them satisfy
\(\dot r=\rho(p)\,r^3\). Following the symmetric component numerically, one
checks that its arcs connect these equilibria to points where \(D=0\).
Numerically, the maximal compact invariant subset of the admissible abnormal
set \(\Sigma_S\cap\set{D\neq0}\) is therefore
\(K_{\max}=\Gamma\cup(-\Gamma)\cup\set{\pm p_a,\pm p_b}\).

\begin{table}[tbp]
\centering
\begin{tabular}{lcccc}
\toprule
equilibrium & coordinates & \(D\) & \(\rho=\kappa/D\) & \(u_s=-D'/D\)\\
\midrule
\(p_a\) & \((0.829003,\,0.082795,\,0.553081)\) & \(-0.04553\) & \(-1.35164\) & \(0.52472\)\\
\(p_b\) & \((0.625817,\,0.538315,\,0.564421)\) & \(-2.47462\) & \(\phantom{-}0.31898\) & \(1.26136\)\\
\bottomrule
\end{tabular}
\caption{Equilibria of \(Y_s\) on \(\Sigma_S\) with \(D\neq0\). The values at
\(-p_a,-p_b\) follow from the parities of \Cref{prop:antipodal}: \(D\) changes
sign, \(\rho\) and \(\abs{u_s}\) do not.}
\label{tab:equilibria}
\end{table}

The component \(\Gamma\) is a small oval around the direction
\((0.36,0.88,-0.30)\), on which \(D>0\). We computed it by integrating \(Y_s\) from
a point of \(\Sigma_S\) (adaptive Runge--Kutta method with relative tolerance
\(10^{-12}\)), detecting the first return. The closure error is
\(1.7\cdot10^{-13}\) and \(\abs{D''}\le2.6\cdot10^{-13}\) along the computed orbit. The
relevant quantities are collected in \Cref{tab:gamma}.

\begin{table}[tbp]
\centering
\begin{tabular}{lll}
\toprule
quantity & value & reference\\
\midrule
\(\min_\Gamma D\), \(\max_\Gamma D\) & \(7.2360\), \(8.3381\) & \Cref{def:regularcomponent}\\
\(\min_\Gamma\abs{Y_s}\), \(\min_\Gamma\abs{\nabla_{\Sph^2}D''}\) & \(0.1993\), \(0.1546\) & \Cref{def:regularcomponent}\\
\(T^\sigma_\Gamma\), \(T^\tau_\Gamma\) & \(1.102812\), \(8.598417\) & \Cref{prop:lift}\\
\(c(\Gamma,Y_s,\abs{D})=T^\tau_\Gamma/T^\sigma_\Gamma\) & \(7.796812\) & \Cref{cor:radial}(a)\\
\(\lambda_\Gamma=\int_\Gamma k\dd\sigma\) & \(0.12406809\) & \Cref{thm:multiplier}\\
\(\int_\Gamma\dvg_{\Sph^2}Y_s\dd\sigma\) & \(0.12406809\) & \Cref{thm:multiplier}\\
multiplier \(e^{\lambda_\Gamma}\); return map, finite differences & \(1.13209\); \(1.1322\) & \Cref{thm:multiplier}\\
\(c(\Gamma,\widehat X_s,\rho)=\ov\rho_\Gamma\) & \(-7.2146\cdot10^{-3}\) & \Cref{cor:radial}(b)\\
\(\osc\chi_\Gamma\) for the cost \(\rho\) & \(0.19789\) & \Cref{prop:lift}(d)\\
\(m_\Gamma=\max_\Gamma\abs{u_s}\); \(m_\Gamma^{-1/3}\); \(m_\Gamma^{-1/3}e^{-\osc\chi_\Gamma}\) & \(0.03618\); \(3.0236\); \(2.4807\) & \Cref{prop:lift}(d)\\
\bottomrule
\end{tabular}
\caption{Numerical data for the regular periodic abnormal component \(\Gamma\).}
\label{tab:gamma}
\end{table}

Since \(\lambda_\Gamma>0\) and \(D>0\) on \(\Gamma\), \(\Gamma\) is a repelling hyperbolic
limit cycle, for both clocks; \(-\Gamma\) is attracting for \(\sigma\) and repelling
for \(\tau\). Inside the oval \(D''<0\), so the normal singular arcs are
elliptic inside and hyperbolic outside, and both families spiral away from
\(\Gamma\) in physical time (\Cref{thm:multiplier}). Since \(\ov\rho_\Gamma<0\), the
abnormal trajectories on the cone over \(\Gamma\) converge to the origin in
infinite time, and those starting at \(\abs{v_0}\le2.48\) satisfy \(\abs{u}\le1\)
for all \(t\ge0\) (\Cref{prop:lift}). As a consistency check, we integrated
\(\dot v=X_s(v)\) in \(\R^3\) from a point of the cone with \(\abs{v_0}=1\): the
trajectory stays on the cone (\(\abs{D''(v/\abs{v})}\le5\cdot10^{-11}\) up to
\(t=400\)) and \(\abs{v(50)}=0.716\), in agreement with
\(\log\abs{v}=\ov\rho_\Gamma\tau+\chi_\Gamma\). \Cref{fig:example}(b) shows the cost
\(\rho\) along \(\Gamma\) and the corresponding corrector, that is, the periodic part
of \(\log\abs{v}\).

\begin{figure}[p]
\centering
\tikzset{
  sing/.style={black,thin},
  dzero/.style={red!70!black,densely dashed,thin},
  Gamma/.style={blue!80!black,very thick},
  mGamma/.style={green!50!black,very thick},
  rhoplot/.style={blue!70!black,thick},
  chiplot/.style={orange!85!black,thick}
}
\begin{tikzpicture}
\begin{axis}[
  width=0.95\linewidth, height=0.5\linewidth,
  xmin=-180,xmax=180,ymin=-90,ymax=90,
  xtick={-180,-90,0,90,180}, ytick={-90,-45,0,45,90},
  xlabel={longitude (degrees)}, ylabel={latitude (degrees)},
  title={(a) \(\Sigma_S\) (black), \(\set{D=0}\) (dashed), \(\Gamma\) (blue), \(-\Gamma\) (green)},
  axis equal image, every axis title/.style={font=\small,at={(0.5,1.02)},anchor=south}
]
% Sigma_S
\addplot[sing] coordinates {(180.00,-90.00) (178.50,-90.00) (177.00,-90.00) (175.50,-90.00) (174.25,-89.69) (173.00,-89.45) (171.75,-89.20) (170.50,-88.91) (169.25,-88.55) (168.25,-88.23) (167.00,-87.79) (166.00,-87.38) (165.00,-86.93) (164.00,-86.42) (163.00,-85.85) (162.07,-85.25) (161.25,-84.68) (160.36,-84.00) (159.50,-83.27) (158.75,-82.58) (158.00,-81.82) (157.25,-81.00) (156.63,-80.25) (156.00,-79.45) (155.32,-78.50) (154.75,-77.64) (154.20,-76.75) (153.62,-75.75) (153.09,-74.75) (152.59,-73.75) (152.12,-72.75) (151.68,-71.75) (151.25,-70.71) (150.79,-69.50) (150.42,-68.50) (150.00,-67.26) (149.68,-66.25) (149.30,-65.00) (149.00,-63.95) (148.68,-62.75) (148.36,-61.50) (148.07,-60.25) (147.79,-59.00) (147.54,-57.75) (147.30,-56.50) (147.08,-55.25) (146.88,-54.00) (146.70,-52.75) (146.52,-51.25) (146.39,-50.00) (146.27,-48.50) (146.20,-47.25) (146.16,-45.75) (146.18,-44.25) (146.25,-43.00) (146.43,-41.50) (146.67,-40.25) (147.00,-39.11) (147.45,-38.00) (148.00,-37.04) (148.60,-36.25) (149.38,-35.50) (150.25,-34.88) (151.25,-34.36) (152.25,-33.98) (153.50,-33.64) (154.75,-33.41) (156.14,-33.25) (157.50,-33.16) (159.00,-33.11) (160.50,-33.10) (162.00,-33.12) (163.50,-33.16) (165.00,-33.22) (166.25,-33.27) (167.75,-33.34) (169.25,-33.41) (170.75,-33.47) (172.00,-33.52) (173.50,-33.57) (175.00,-33.62) (176.50,-33.65) (178.00,-33.68) (179.50,-33.69)};
\addplot[sing] coordinates {(-180.00,-33.69) (-178.50,-33.68) (-177.00,-33.66) (-175.50,-33.62) (-174.00,-33.57) (-172.64,-33.50) (-171.25,-33.42) (-169.75,-33.31) (-168.50,-33.20) (-167.00,-33.04) (-165.75,-32.90) (-164.50,-32.73) (-163.00,-32.51) (-161.75,-32.30) (-160.50,-32.06) (-159.25,-31.79) (-158.05,-31.50) (-157.00,-31.22) (-155.75,-30.83) (-154.75,-30.48) (-153.55,-30.00) (-152.50,-29.50) (-151.60,-29.00) (-150.75,-28.44) (-149.88,-27.75) (-149.11,-27.00) (-148.49,-26.25) (-147.82,-25.25) (-147.30,-24.25) (-146.88,-23.25) (-146.50,-22.08) (-146.23,-21.00) (-145.98,-19.75) (-145.78,-18.25) (-145.67,-17.00) (-145.63,-15.50) (-145.66,-14.00) (-145.77,-12.75) (-145.99,-11.25) (-146.25,-10.09) (-146.58,-9.00) (-147.00,-7.97) (-147.51,-7.00) (-148.18,-6.00) (-148.81,-5.25) (-149.60,-4.50) (-150.50,-3.81) (-151.40,-3.25) (-152.38,-2.75) (-153.50,-2.28) (-154.50,-1.93) (-155.75,-1.57) (-157.00,-1.28) (-158.25,-1.04) (-159.50,-0.84) (-160.75,-0.67) (-162.25,-0.51) (-163.50,-0.40) (-165.00,-0.29) (-166.25,-0.22) (-167.75,-0.15) (-169.25,-0.10) (-170.75,-0.06) (-172.25,-0.04) (-173.75,-0.02) (-175.25,-0.01) (-176.75,-0.00) (-178.25,-0.00) (-179.75,-0.00)};
\addplot[sing] coordinates {(180.00,0.00) (178.50,0.00) (177.00,0.00) (175.50,0.01) (174.00,0.02) (172.50,0.03) (171.00,0.06) (169.50,0.09) (168.00,0.14) (166.50,0.20) (165.25,0.27) (163.75,0.37) (162.25,0.49) (161.00,0.62) (159.75,0.76) (158.25,0.98) (157.00,1.19) (155.75,1.43) (154.50,1.72) (153.45,2.00) (152.25,2.36) (151.14,2.75) (150.00,3.21) (149.00,3.66) (148.00,4.18) (147.02,4.75) (146.24,5.25) (145.25,5.97) (144.50,6.57) (143.72,7.25) (142.94,8.00) (142.23,8.75) (141.50,9.58) (140.75,10.50) (140.18,11.25) (139.50,12.19) (138.94,13.00) (138.29,14.00) (137.75,14.86) (137.20,15.75) (136.60,16.75) (136.01,17.75) (135.50,18.62) (134.98,19.50) (134.39,20.50) (133.79,21.50) (133.25,22.37) (132.69,23.25) (132.03,24.25) (131.50,25.01) (130.78,26.00) (130.19,26.75) (129.50,27.57) (128.75,28.39) (128.00,29.13) (127.25,29.79) (126.33,30.50) (125.50,31.05) (124.50,31.59) (123.50,32.02) (122.25,32.40) (121.00,32.61) (119.50,32.67) (118.00,32.53) (116.75,32.27) (115.75,31.97) (114.54,31.50) (113.50,31.01) (112.56,30.50) (111.74,30.00) (110.75,29.34) (109.94,28.75) (109.00,28.01) (108.25,27.37) (107.50,26.70) (106.75,25.99) (106.00,25.25) (105.25,24.47) (104.50,23.65) (103.75,22.80) (103.07,22.00) (102.46,21.25) (101.75,20.36) (101.09,19.50) (100.50,18.71) (99.80,17.75) (99.25,16.97) (98.58,16.00) (98.00,15.14) (97.41,14.25) (96.76,13.25) (96.25,12.44) (95.67,11.50) (95.06,10.50) (94.50,9.55) (94.00,8.69) (93.46,7.75) (92.91,6.75) (92.37,5.75) (91.86,4.75) (91.36,3.75) (90.90,2.75) (90.49,1.75) (90.03,0.50) (89.75,0.89) (89.25,1.86) (88.66,2.75) (88.00,3.57) (87.25,4.43) (86.50,5.22) (85.75,5.96) (85.00,6.66) (84.25,7.34) (83.49,8.00) (82.60,8.75) (81.75,9.44) (81.00,10.04) (80.08,10.75) (79.25,11.38) (78.41,12.00) (77.50,12.66) (76.66,13.25) (75.75,13.88) (74.85,14.50) (74.00,15.07) (73.00,15.73) (72.19,16.25) (71.25,16.85) (70.25,17.47) (69.39,18.00) (68.50,18.53) (67.50,19.13) (66.50,19.71) (65.56,20.25) (64.67,20.75) (63.75,21.26) (62.75,21.81) (61.75,22.35) (60.75,22.88) (59.75,23.40) (58.75,23.92) (57.75,24.43) (56.75,24.93) (55.75,25.43) (54.75,25.92) (53.75,26.41) (52.75,26.90) (51.75,27.39) (50.75,27.89) (49.75,28.38) (48.75,28.89) (47.75,29.40) (46.75,29.94) (45.75,30.50) (44.90,31.00) (44.00,31.58) (43.04,32.25) (42.25,32.87) (41.50,33.54) (40.75,34.31) (40.00,35.21) (39.45,36.00) (38.85,37.00) (38.34,38.00) (37.90,39.00) (37.50,40.04) (37.08,41.25) (36.75,42.32) (36.41,43.50) (36.07,44.75) (35.75,46.00) (35.50,47.03) (35.21,48.25) (34.91,49.50) (34.62,50.75) (34.32,52.00) (34.03,53.25) (33.75,54.43) (33.49,55.50) (33.18,56.75) (32.87,58.00) (32.54,59.25) (32.25,60.34) (31.93,61.50) (31.57,62.75) (31.25,63.82) (30.88,65.00) (30.50,66.18) (30.13,67.25) (29.75,68.32) (29.31,69.50) (28.91,70.50) (28.49,71.50) (28.00,72.60) (27.50,73.66) (27.00,74.65) (26.50,75.58) (25.97,76.50) (25.36,77.50) (24.75,78.42) (24.16,79.25) (23.50,80.11) (22.75,81.00) (22.07,81.75) (21.33,82.50) (20.53,83.25) (19.75,83.91) (18.99,84.50) (18.00,85.21) (17.15,85.75) (16.25,86.27) (15.25,86.80) (14.25,87.27) (13.08,87.75) (12.00,88.16) (10.89,88.50) (9.75,88.84) (8.50,89.15) (7.25,89.41) (6.00,89.65) (4.60,89.75) (3.25,90.00) (1.75,90.00) (0.25,90.00)};
\addplot[sing] coordinates {(-0.00,90.00) (-1.50,90.00) (-3.00,90.00) (-4.50,90.00) (-5.75,89.69) (-7.00,89.45) (-8.25,89.20) (-9.50,88.91) (-10.75,88.55) (-11.75,88.23) (-13.00,87.79) (-14.00,87.38) (-15.00,86.93) (-16.00,86.42) (-17.00,85.85) (-17.93,85.25) (-18.75,84.68) (-19.64,84.00) (-20.50,83.27) (-21.25,82.58) (-22.00,81.82) (-22.75,81.00) (-23.37,80.25) (-24.00,79.45) (-24.68,78.50) (-25.25,77.64) (-25.80,76.75) (-26.38,75.75) (-26.91,74.75) (-27.41,73.75) (-27.88,72.75) (-28.32,71.75) (-28.75,70.71) (-29.21,69.50) (-29.58,68.50) (-30.00,67.26) (-30.32,66.25) (-30.70,65.00) (-31.00,63.95) (-31.32,62.75) (-31.64,61.50) (-31.93,60.25) (-32.21,59.00) (-32.46,57.75) (-32.70,56.50) (-32.92,55.25) (-33.12,54.00) (-33.30,52.75) (-33.48,51.25) (-33.61,50.00) (-33.73,48.50) (-33.80,47.25) (-33.84,45.75) (-33.82,44.25) (-33.75,43.00) (-33.57,41.50) (-33.33,40.25) (-33.00,39.11) (-32.55,38.00) (-32.00,37.04) (-31.40,36.25) (-30.62,35.50) (-29.75,34.88) (-28.75,34.36) (-27.75,33.98) (-26.50,33.64) (-25.25,33.41) (-23.86,33.25) (-22.50,33.16) (-21.00,33.11) (-19.50,33.10) (-18.00,33.12) (-16.50,33.16) (-15.00,33.22) (-13.75,33.27) (-12.25,33.34) (-10.75,33.41) (-9.25,33.47) (-8.00,33.52) (-6.50,33.57) (-5.00,33.62) (-3.50,33.65) (-2.00,33.68) (-0.50,33.69) (1.00,33.69) (2.50,33.67) (4.00,33.64) (5.50,33.59) (7.00,33.52) (8.25,33.45) (9.75,33.34) (11.00,33.24) (12.50,33.10) (13.75,32.96) (15.25,32.77) (16.50,32.59) (17.75,32.38) (19.00,32.16) (20.25,31.90) (21.50,31.61) (22.75,31.29) (23.75,30.99) (25.00,30.57) (26.00,30.19) (27.00,29.75) (28.00,29.23) (29.00,28.62) (29.83,28.00) (30.65,27.25) (31.32,26.50) (32.00,25.55) (32.50,24.67) (33.00,23.56) (33.37,22.50) (33.72,21.25) (33.98,20.00) (34.16,18.75) (34.29,17.50) (34.37,16.00) (34.36,14.50) (34.26,13.00) (34.10,11.75) (33.85,10.50) (33.50,9.25) (33.12,8.25) (32.64,7.25) (32.01,6.25) (31.41,5.50) (30.68,4.75) (29.77,4.00) (29.00,3.49) (28.00,2.93) (27.00,2.48) (25.75,2.02) (24.75,1.71) (23.50,1.39) (22.25,1.13) (21.00,0.91) (19.75,0.73) (18.25,0.56) (17.00,0.44) (15.50,0.32) (14.25,0.24) (12.75,0.17) (11.25,0.12) (9.75,0.07) (8.25,0.04) (6.75,0.02) (5.25,0.01) (3.75,0.00) (2.25,0.00) (0.75,0.00) (-0.50,-0.00) (-2.00,-0.00) (-3.50,-0.00) (-5.00,-0.01) (-6.50,-0.02) (-8.00,-0.04) (-9.50,-0.07) (-11.00,-0.11) (-12.50,-0.16) (-14.00,-0.23) (-15.25,-0.30) (-16.75,-0.41) (-18.00,-0.51) (-19.50,-0.67) (-20.75,-0.83) (-22.00,-1.02) (-23.34,-1.25) (-24.56,-1.50) (-25.75,-1.78) (-27.00,-2.13) (-28.16,-2.50) (-29.25,-2.90) (-30.25,-3.32) (-31.25,-3.79) (-32.25,-4.32) (-33.25,-4.92) (-34.12,-5.50) (-35.00,-6.16) (-35.75,-6.78) (-36.55,-7.50) (-37.30,-8.25) (-38.00,-9.00) (-38.75,-9.88) (-39.45,-10.75) (-40.01,-11.50) (-40.72,-12.50) (-41.25,-13.29) (-41.87,-14.25) (-42.49,-15.25) (-43.00,-16.09) (-43.55,-17.00) (-44.14,-18.00) (-44.72,-19.00) (-45.25,-19.89) (-45.76,-20.75) (-46.37,-21.75) (-46.99,-22.75) (-47.50,-23.54) (-48.14,-24.50) (-48.75,-25.36) (-49.41,-26.25) (-50.01,-27.00) (-50.75,-27.86) (-51.50,-28.65) (-52.25,-29.36) (-53.00,-30.00) (-54.00,-30.73) (-54.85,-31.25) (-55.84,-31.75) (-57.00,-32.19) (-58.24,-32.50) (-59.50,-32.66) (-61.00,-32.65) (-62.25,-32.49) (-63.50,-32.20) (-64.75,-31.79) (-65.75,-31.37) (-66.75,-30.88) (-67.75,-30.32) (-68.65,-29.75) (-69.50,-29.16) (-70.38,-28.50) (-71.25,-27.80) (-72.00,-27.15) (-72.75,-26.47) (-73.50,-25.75) (-74.25,-24.99) (-75.00,-24.20) (-75.75,-23.37) (-76.50,-22.51) (-77.13,-21.75) (-77.75,-20.99) (-78.50,-20.04) (-79.10,-19.25) (-79.75,-18.37) (-80.38,-17.50) (-81.00,-16.61) (-81.59,-15.75) (-82.25,-14.77) (-82.75,-14.00) (-83.40,-13.00) (-84.00,-12.04) (-84.50,-11.22) (-85.09,-10.25) (-85.68,-9.25) (-86.25,-8.25) (-86.75,-7.36) (-87.25,-6.45) (-87.76,-5.50) (-88.27,-4.50) (-88.75,-3.50) (-89.25,-2.38) (-89.71,-1.25) (-90.00,0.00) (-90.39,-1.25) (-91.00,-2.25) (-91.53,-3.00) (-92.25,-3.87) (-93.00,-4.70) (-93.75,-5.47) (-94.50,-6.20) (-95.25,-6.89) (-96.00,-7.56) (-96.80,-8.25) (-97.70,-9.00) (-98.50,-9.64) (-99.27,-10.25) (-100.25,-11.00) (-101.00,-11.56) (-101.93,-12.25) (-102.75,-12.83) (-103.69,-13.50) (-104.50,-14.06) (-105.50,-14.73) (-106.27,-15.25) (-107.25,-15.89) (-108.20,-16.50) (-109.00,-17.00) (-110.00,-17.62) (-111.00,-18.23) (-111.86,-18.75) (-112.75,-19.27) (-113.75,-19.85) (-114.75,-20.42) (-115.75,-20.99) (-116.68,-21.50) (-117.60,-22.00) (-118.53,-22.50) (-119.50,-23.01) (-120.50,-23.53) (-121.50,-24.05) (-122.50,-24.55) (-123.50,-25.06) (-124.50,-25.55) (-125.50,-26.05) (-126.50,-26.54) (-127.50,-27.03) (-128.50,-27.52) (-129.50,-28.01) (-130.50,-28.51) (-131.50,-29.01) (-132.50,-29.53) (-133.50,-30.07) (-134.50,-30.64) (-135.50,-31.25) (-136.26,-31.75) (-137.25,-32.47) (-138.00,-33.09) (-138.75,-33.78) (-139.50,-34.59) (-140.21,-35.50) (-140.75,-36.31) (-141.28,-37.25) (-141.77,-38.25) (-142.25,-39.38) (-142.66,-40.50) (-143.00,-41.51) (-143.38,-42.75) (-143.73,-44.00) (-144.00,-45.03) (-144.31,-46.25) (-144.61,-47.50) (-144.91,-48.75) (-145.21,-50.00) (-145.50,-51.25) (-145.75,-52.32) (-146.03,-53.50) (-146.33,-54.75) (-146.63,-56.00) (-146.94,-57.25) (-147.25,-58.45) (-147.52,-59.50) (-147.86,-60.75) (-148.21,-62.00) (-148.50,-63.00) (-148.88,-64.25) (-149.25,-65.41) (-149.61,-66.50) (-150.00,-67.63) (-150.41,-68.75) (-150.79,-69.75) (-151.25,-70.88) (-151.73,-72.00) (-152.19,-73.00) (-152.67,-74.00) (-153.19,-75.00) (-153.74,-76.00) (-154.25,-76.87) (-154.80,-77.75) (-155.48,-78.75) (-156.03,-79.50) (-156.75,-80.41) (-157.47,-81.25) (-158.17,-82.00) (-158.93,-82.75) (-159.75,-83.49) (-160.50,-84.12) (-161.35,-84.75) (-162.25,-85.38) (-163.25,-85.99) (-164.17,-86.50) (-165.17,-87.00) (-166.25,-87.48) (-167.25,-87.89) (-168.32,-88.25) (-169.50,-88.64) (-170.75,-88.96) (-172.00,-89.23) (-173.25,-89.48) (-174.50,-89.71) (-175.75,-90.00) (-177.25,-90.00) (-178.75,-90.00)};
\addplot[sing] coordinates {(65.25,-20.01) (64.45,-19.50) (64.09,-18.25) (64.50,-17.19) (65.25,-16.36) (66.13,-15.75) (67.20,-15.25) (68.25,-14.93) (69.75,-14.76) (71.00,-15.12) (71.27,-16.25) (70.77,-17.25) (70.16,-18.00) (69.34,-18.75) (68.50,-19.33) (67.50,-19.82) (66.25,-20.11)};
\addplot[sing] coordinates {(-112.00,14.99) (-110.50,14.76) (-109.17,15.00) (-108.68,16.00) (-109.07,17.00) (-109.75,17.90) (-110.50,18.61) (-111.37,19.25) (-112.33,19.75) (-113.50,20.08) (-114.77,20.00) (-115.73,19.25) (-115.87,18.00) (-115.37,17.00) (-114.62,16.25) (-113.75,15.68) (-112.75,15.23)};
\addplot[Gamma] coordinates {(64.41,-17.33) (64.36,-17.41) (64.32,-17.49) (64.28,-17.56) (64.25,-17.64) (64.22,-17.72) (64.19,-17.80) (64.17,-17.87) (64.14,-17.95) (64.12,-18.02) (64.11,-18.10) (64.09,-18.17) (64.08,-18.25) (64.08,-18.32) (64.07,-18.39) (64.07,-18.46) (64.07,-18.53) (64.08,-18.60) (64.08,-18.67) (64.09,-18.74) (64.11,-18.81) (64.12,-18.87) (64.14,-18.93) (64.16,-19.00) (64.18,-19.06) (64.21,-19.12) (64.23,-19.18) (64.26,-19.24) (64.30,-19.29) (64.33,-19.35) (64.37,-19.40) (64.41,-19.45) (64.45,-19.50) (64.50,-19.55) (64.54,-19.60) (64.59,-19.64) (64.64,-19.69) (64.70,-19.73) (64.75,-19.77) (64.81,-19.80) (64.87,-19.84) (64.93,-19.87) (64.99,-19.91) (65.06,-19.94) (65.13,-19.96) (65.20,-19.99) (65.27,-20.01) (65.34,-20.03) (65.42,-20.05) (65.50,-20.07) (65.58,-20.08) (65.66,-20.10) (65.74,-20.11) (65.82,-20.11) (65.91,-20.12) (66.00,-20.12) (66.09,-20.12) (66.18,-20.12) (66.27,-20.11) (66.37,-20.10) (66.46,-20.09) (66.56,-20.08) (66.66,-20.06) (66.76,-20.04) (66.86,-20.02) (66.96,-19.99) (67.07,-19.96) (67.17,-19.93) (67.28,-19.90) (67.38,-19.86) (67.49,-19.82) (67.60,-19.78) (67.71,-19.74) (67.82,-19.69) (67.93,-19.64) (68.04,-19.58) (68.15,-19.53) (68.26,-19.47) (68.37,-19.41) (68.48,-19.34) (68.59,-19.28) (68.70,-19.21) (68.81,-19.14) (68.92,-19.06) (69.02,-18.99) (69.13,-18.91) (69.24,-18.83) (69.34,-18.75) (69.45,-18.66) (69.55,-18.58) (69.65,-18.49) (69.75,-18.40) (69.84,-18.31) (69.94,-18.22) (70.03,-18.13) (70.12,-18.04) (70.21,-17.95) (70.29,-17.85) (70.38,-17.76) (70.46,-17.66) (70.53,-17.57) (70.61,-17.47) (70.68,-17.38) (70.75,-17.28) (70.81,-17.19) (70.87,-17.09) (70.93,-17.00) (70.99,-16.91) (71.04,-16.82) (71.08,-16.73) (71.12,-16.64) (71.16,-16.55) (71.20,-16.46) (71.23,-16.38) (71.26,-16.29) (71.28,-16.21) (71.30,-16.13) (71.31,-16.05) (71.32,-15.98) (71.33,-15.90) (71.33,-15.83) (71.33,-15.76) (71.32,-15.69) (71.31,-15.62) (71.30,-15.56) (71.28,-15.50) (71.25,-15.44) (71.23,-15.38) (71.19,-15.32) (71.16,-15.27) (71.12,-15.22) (71.07,-15.17) (71.02,-15.13) (70.97,-15.09) (70.92,-15.05) (70.85,-15.01) (70.79,-14.97) (70.72,-14.94) (70.65,-14.91) (70.58,-14.88) (70.50,-14.86) (70.41,-14.84) (70.33,-14.82) (70.24,-14.80) (70.15,-14.79) (70.05,-14.78) (69.95,-14.77) (69.85,-14.76) (69.75,-14.76) (69.64,-14.76) (69.53,-14.76) (69.42,-14.76) (69.31,-14.77) (69.19,-14.78) (69.08,-14.79) (68.96,-14.80) (68.84,-14.82) (68.72,-14.84) (68.59,-14.86) (68.47,-14.88) (68.35,-14.91) (68.22,-14.94) (68.09,-14.97) (67.97,-15.00) (67.84,-15.04) (67.72,-15.07) (67.59,-15.11) (67.46,-15.15) (67.34,-15.20) (67.22,-15.24) (67.09,-15.29) (66.97,-15.34) (66.85,-15.39) (66.73,-15.44) (66.61,-15.50) (66.49,-15.55) (66.38,-15.61) (66.27,-15.67) (66.16,-15.73) (66.05,-15.79) (65.94,-15.86) (65.84,-15.92) (65.74,-15.99) (65.64,-16.06) (65.54,-16.13) (65.45,-16.20) (65.36,-16.27) (65.27,-16.34) (65.18,-16.41) (65.10,-16.49) (65.02,-16.56) (64.95,-16.64) (64.87,-16.71) (64.80,-16.79) (64.74,-16.87) (64.67,-16.94) (64.61,-17.02) (64.56,-17.10) (64.50,-17.17) (64.45,-17.25) (64.41,-17.33)};
\addplot[mGamma] coordinates {(-115.59,17.33) (-115.64,17.41) (-115.68,17.49) (-115.72,17.56) (-115.75,17.64) (-115.78,17.72) (-115.81,17.80) (-115.83,17.87) (-115.86,17.95) (-115.88,18.02) (-115.89,18.10) (-115.91,18.17) (-115.92,18.25) (-115.92,18.32) (-115.93,18.39) (-115.93,18.46) (-115.93,18.53) (-115.92,18.60) (-115.92,18.67) (-115.91,18.74) (-115.89,18.81) (-115.88,18.87) (-115.86,18.93) (-115.84,19.00) (-115.82,19.06) (-115.79,19.12) (-115.77,19.18) (-115.74,19.24) (-115.70,19.29) (-115.67,19.35) (-115.63,19.40) (-115.59,19.45) (-115.55,19.50) (-115.50,19.55) (-115.46,19.60) (-115.41,19.64) (-115.36,19.69) (-115.30,19.73) (-115.25,19.77) (-115.19,19.80) (-115.13,19.84) (-115.07,19.87) (-115.01,19.91) (-114.94,19.94) (-114.87,19.96) (-114.80,19.99) (-114.73,20.01) (-114.66,20.03) (-114.58,20.05) (-114.50,20.07) (-114.42,20.08) (-114.34,20.10) (-114.26,20.11) (-114.18,20.11) (-114.09,20.12) (-114.00,20.12) (-113.91,20.12) (-113.82,20.12) (-113.73,20.11) (-113.63,20.10) (-113.54,20.09) (-113.44,20.08) (-113.34,20.06) (-113.24,20.04) (-113.14,20.02) (-113.04,19.99) (-112.93,19.96) (-112.83,19.93) (-112.72,19.90) (-112.62,19.86) (-112.51,19.82) (-112.40,19.78) (-112.29,19.74) (-112.18,19.69) (-112.07,19.64) (-111.96,19.58) (-111.85,19.53) (-111.74,19.47) (-111.63,19.41) (-111.52,19.34) (-111.41,19.28) (-111.30,19.21) (-111.19,19.14) (-111.08,19.06) (-110.98,18.99) (-110.87,18.91) (-110.76,18.83) (-110.66,18.75) (-110.55,18.66) (-110.45,18.58) (-110.35,18.49) (-110.25,18.40) (-110.16,18.31) (-110.06,18.22) (-109.97,18.13) (-109.88,18.04) (-109.79,17.95) (-109.71,17.85) (-109.62,17.76) (-109.54,17.66) (-109.47,17.57) (-109.39,17.47) (-109.32,17.38) (-109.25,17.28) (-109.19,17.19) (-109.13,17.09) (-109.07,17.00) (-109.01,16.91) (-108.96,16.82) (-108.92,16.73) (-108.88,16.64) (-108.84,16.55) (-108.80,16.46) (-108.77,16.38) (-108.74,16.29) (-108.72,16.21) (-108.70,16.13) (-108.69,16.05) (-108.68,15.98) (-108.67,15.90) (-108.67,15.83) (-108.67,15.76) (-108.68,15.69) (-108.69,15.62) (-108.70,15.56) (-108.72,15.50) (-108.75,15.44) (-108.77,15.38) (-108.81,15.32) (-108.84,15.27) (-108.88,15.22) (-108.93,15.17) (-108.98,15.13) (-109.03,15.09) (-109.08,15.05) (-109.15,15.01) (-109.21,14.97) (-109.28,14.94) (-109.35,14.91) (-109.42,14.88) (-109.50,14.86) (-109.59,14.84) (-109.67,14.82) (-109.76,14.80) (-109.85,14.79) (-109.95,14.78) (-110.05,14.77) (-110.15,14.76) (-110.25,14.76) (-110.36,14.76) (-110.47,14.76) (-110.58,14.76) (-110.69,14.77) (-110.81,14.78) (-110.92,14.79) (-111.04,14.80) (-111.16,14.82) (-111.28,14.84) (-111.41,14.86) (-111.53,14.88) (-111.65,14.91) (-111.78,14.94) (-111.91,14.97) (-112.03,15.00) (-112.16,15.04) (-112.28,15.07) (-112.41,15.11) (-112.54,15.15) (-112.66,15.20) (-112.78,15.24) (-112.91,15.29) (-113.03,15.34) (-113.15,15.39) (-113.27,15.44) (-113.39,15.50) (-113.51,15.55) (-113.62,15.61) (-113.73,15.67) (-113.84,15.73) (-113.95,15.79) (-114.06,15.86) (-114.16,15.92) (-114.26,15.99) (-114.36,16.06) (-114.46,16.13) (-114.55,16.20) (-114.64,16.27) (-114.73,16.34) (-114.82,16.41) (-114.90,16.49) (-114.98,16.56) (-115.05,16.64) (-115.13,16.71) (-115.20,16.79) (-115.26,16.87) (-115.33,16.94) (-115.39,17.02) (-115.44,17.10) (-115.50,17.17) (-115.55,17.25) (-115.59,17.33)};
% D=0
\addplot[dzero] coordinates {(-180.0,-18.43) (-175.0,-18.37) (-170.0,-18.17) (-165.0,-17.85) (-160.0,-17.39) (-155.0,-16.81) (-150.0,-16.10) (-145.0,-15.27) (-140.0,-14.32) (-135.0,-13.26) (-130.0,-12.09) (-125.0,-10.82) (-120.0,-9.46) (-115.0,-8.02) (-110.0,-6.50) (-105.0,-4.93) (-100.0,-3.31) (-95.0,-1.66) (-90.0,0.00) (-85.0,1.66) (-80.0,3.31) (-75.0,4.93) (-70.0,6.50) (-65.0,8.02) (-60.0,9.46) (-55.0,10.82) (-50.0,12.09) (-45.0,13.26) (-40.0,14.32) (-35.0,15.27) (-30.0,16.10) (-25.0,16.81) (-20.0,17.39) (-15.0,17.85) (-10.0,18.17) (-5.0,18.37) (0.0,18.43) (5.0,18.37) (10.0,18.17) (15.0,17.85) (20.0,17.39) (25.0,16.81) (30.0,16.10) (35.0,15.27) (40.0,14.32) (45.0,13.26) (50.0,12.09) (55.0,10.82) (60.0,9.46) (65.0,8.02) (70.0,6.50) (75.0,4.93) (80.0,3.31) (85.0,1.66) (90.0,0.00) (95.0,-1.66) (100.0,-3.31) (105.0,-4.93) (110.0,-6.50) (115.0,-8.02) (120.0,-9.46) (125.0,-10.82) (130.0,-12.09) (135.0,-13.26) (140.0,-14.32) (145.0,-15.27) (150.0,-16.10) (155.0,-16.81) (160.0,-17.39) (165.0,-17.85) (170.0,-18.17) (175.0,-18.37) (180.0,-18.43)};
\addplot[dzero] coordinates {(0,-90) (0,90)};
\addplot[dzero] coordinates {(180,-90) (180,90)};
\addplot[dzero] coordinates {(-180,-90) (-180,90)};
\addplot[only marks,mark=*,mark size=1.6pt,magenta] coordinates {(5.70,33.58) (-174.30,-33.58) (40.70,34.36) (-139.30,-34.36)};
\node[font=\scriptsize,anchor=south east] at (axis cs:5.70,35) {\(p_a\)};
\node[font=\scriptsize,anchor=south west] at (axis cs:40.70,36) {\(p_b\)};
\node[font=\scriptsize,anchor=north west] at (axis cs:-174.30,-36) {\(-p_a\)};
\node[font=\scriptsize,anchor=north west] at (axis cs:-139.30,-36) {\(-p_b\)};
\node[font=\scriptsize,anchor=north] at (axis cs:68,-22) {\(\Gamma\)};
\node[font=\scriptsize,anchor=south] at (axis cs:-112,22) {\(-\Gamma\)};
\end{axis}
\end{tikzpicture}

\vspace{2mm}
\begin{tikzpicture}
\begin{axis}[
  width=0.8\linewidth, height=0.42\linewidth,
  xmin=0,xmax=8.6,
  xlabel={physical spherical time \(\tau\)},
  legend style={at={(0.02,0.02)},anchor=south west,draw=none,fill=white,font=\small},
  title={(b) radial exponent along \(\Gamma\) and critical corrector},
  every axis title/.style={font=\small,at={(0.5,1.02)},anchor=south},
  grid=major, grid style={gray!20}
]
\addplot[rhoplot] coordinates {(0.0000,0.04250) (0.0710,0.03921) (0.1426,0.03578) (0.2145,0.03224) (0.2861,0.02862) (0.3583,0.02490) (0.4308,0.02110) (0.5030,0.01728) (0.5759,0.01338) (0.6489,0.00945) (0.7218,0.00551) (0.7952,0.00155) (0.8689,-0.00242) (0.9423,-0.00635) (1.0163,-0.01029) (1.0905,-0.01419) (1.1645,-0.01804) (1.2390,-0.02186) (1.3138,-0.02561) (1.3882,-0.02928) (1.4633,-0.03290) (1.5385,-0.03643) (1.6135,-0.03985) (1.6890,-0.04319) (1.7647,-0.04642) (1.8400,-0.04952) (1.9160,-0.05251) (1.9920,-0.05537) (2.0677,-0.05808) (2.1440,-0.06066) (2.2204,-0.06308) (2.2964,-0.06533) (2.3729,-0.06743) (2.4495,-0.06936) (2.5257,-0.07109) (2.6025,-0.07265) (2.6792,-0.07402) (2.7555,-0.07518) (2.8323,-0.07616) (2.9091,-0.07692) (2.9854,-0.07747) (3.0621,-0.07782) (3.1388,-0.07795) (3.2149,-0.07786) (3.2914,-0.07756) (3.3679,-0.07705) (3.4437,-0.07632) (3.5199,-0.07538) (3.5959,-0.07424) (3.6714,-0.07290) (3.7470,-0.07137) (3.8225,-0.06964) (3.8973,-0.06775) (3.9723,-0.06568) (4.0471,-0.06345) (4.1211,-0.06108) (4.1953,-0.05857) (4.2692,-0.05593) (4.3424,-0.05319) (4.4157,-0.05033) (4.4887,-0.04739) (4.5609,-0.04438) (4.6332,-0.04129) (4.7051,-0.03813) (4.7763,-0.03495) (4.8476,-0.03171) (4.9186,-0.02844) (4.9888,-0.02516) (5.0591,-0.02185) (5.1290,-0.01852) (5.1983,-0.01521) (5.2676,-0.01187) (5.3367,-0.00854) (5.4051,-0.00522) (5.4736,-0.00190) (5.5419,0.00142) (5.6095,0.00470) (5.6773,0.00799) (5.7450,0.01126) (5.8120,0.01449) (5.8793,0.01772) (5.9465,0.02093) (6.0131,0.02408) (6.0800,0.02722) (6.1468,0.03032) (6.2132,0.03335) (6.2799,0.03635) (6.3465,0.03929) (6.4127,0.04214) (6.4794,0.04493) (6.5460,0.04764) (6.6123,0.05023) (6.6790,0.05272) (6.7458,0.05509) (6.8122,0.05731) (6.8791,0.05940) (6.9461,0.06134) (7.0128,0.06308) (7.0801,0.06466) (7.1474,0.06604) (7.2145,0.06721) (7.2821,0.06817) (7.3499,0.06891) (7.4174,0.06941) (7.4855,0.06967) (7.5537,0.06970) (7.6218,0.06948) (7.6904,0.06901) (7.7592,0.06830) (7.8277,0.06735) (7.8969,0.06615) (7.9663,0.06471) (8.0354,0.06305) (8.1052,0.06116) (8.1751,0.05905) (8.2449,0.05675) (8.3153,0.05425) (8.3858,0.05155) (8.4562,0.04870) (8.5272,0.04568) (8.5984,0.04250)};
\addplot[chiplot] coordinates {(0.0000,0.00000) (0.0710,0.00341) (0.1426,0.00662) (0.2145,0.00958) (0.2861,0.01227) (0.3583,0.01473) (0.4308,0.01692) (0.5030,0.01883) (0.5759,0.02047) (0.6489,0.02183) (0.7218,0.02290) (0.7952,0.02369) (0.8689,0.02419) (0.9423,0.02440) (1.0163,0.02431) (1.0905,0.02394) (1.1645,0.02328) (1.2390,0.02233) (1.3138,0.02110) (1.3882,0.01959) (1.4633,0.01780) (1.5385,0.01573) (1.6135,0.01341) (1.6890,0.01082) (1.7647,0.00797) (1.8400,0.00490) (1.9160,0.00158) (1.9920,-0.00198) (2.0677,-0.00573) (2.1440,-0.00971) (2.2204,-0.01388) (2.2964,-0.01822) (2.3729,-0.02275) (2.4495,-0.02743) (2.5257,-0.03224) (2.6025,-0.03720) (2.6792,-0.04227) (2.7555,-0.04742) (2.8323,-0.05267) (2.9091,-0.05800) (2.9854,-0.06334) (3.0621,-0.06874) (3.1388,-0.07416) (3.2149,-0.07955) (3.2914,-0.08494) (3.3679,-0.09030) (3.4437,-0.09557) (3.5199,-0.10080) (3.5959,-0.10595) (3.6714,-0.11095) (3.7470,-0.11586) (3.8225,-0.12064) (3.8973,-0.12524) (3.9723,-0.12971) (4.0471,-0.13399) (4.1211,-0.13807) (4.1953,-0.14198) (4.2692,-0.14568) (4.3424,-0.14914) (4.4157,-0.15241) (4.4887,-0.15545) (4.5609,-0.15824) (4.6332,-0.16081) (4.7051,-0.16315) (4.7763,-0.16524) (4.8476,-0.16710) (4.9186,-0.16873) (4.9888,-0.17010) (5.0591,-0.17125) (5.1290,-0.17215) (5.1983,-0.17282) (5.2676,-0.17326) (5.3367,-0.17347) (5.4051,-0.17345) (5.4736,-0.17320) (5.5419,-0.17272) (5.6095,-0.17202) (5.6773,-0.17111) (5.7450,-0.16997) (5.8120,-0.16862) (5.8793,-0.16705) (5.9465,-0.16527) (6.0131,-0.16329) (6.0800,-0.16109) (6.1468,-0.15868) (6.2132,-0.15609) (6.2799,-0.15329) (6.3465,-0.15028) (6.4127,-0.14711) (6.4794,-0.14373) (6.5460,-0.14016) (6.6123,-0.13644) (6.6790,-0.13252) (6.7458,-0.12844) (6.8122,-0.12423) (6.8791,-0.11984) (6.9461,-0.11531) (7.0128,-0.11068) (7.0801,-0.10590) (7.1474,-0.10101) (7.2145,-0.09605) (7.2821,-0.09099) (7.3499,-0.08585) (7.4174,-0.08070) (7.4855,-0.07547) (7.5537,-0.07022) (7.6218,-0.06499) (7.6904,-0.05975) (7.7592,-0.05452) (7.8277,-0.04938) (7.8969,-0.04426) (7.9663,-0.03922) (8.0354,-0.03430) (8.1052,-0.02947) (8.1751,-0.02475) (8.2449,-0.02021) (8.3153,-0.01580) (8.3858,-0.01155) (8.4562,-0.00752) (8.5272,-0.00365) (8.5984,-0.00000)};
\addplot[red!75!black,dashed,thick] coordinates {(0,-0.0072146) (8.6,-0.0072146)};
\legend{\(\rho(\gamma(\tau))\),\(\chi_\Gamma(\gamma(\tau))\),\(c=\ov\rho_\Gamma\)}
\end{axis}
\end{tikzpicture}
\caption{The example \eqref{eq:exampleH}. (a) Equirectangular chart of the
sphere. (b) The cost \(\rho\) along \(\Gamma\), its average, and the corrector
\(\chi_\Gamma=\int(\rho-\ov\rho_\Gamma)\dd\tau\), which is the periodic part of
\(\log\abs{v}\) along the abnormal cone.}
\label{fig:example}
\end{figure}

\Cref{tab:selection} applies \Cref{thm:selection} to several compact invariant
sets. For the intrinsic cost \(\rho\) (spherical clock) the periodic component
is selected against the expanding ray through \(\pm p_b\), but not against the
strongly contracting ray through \(\pm p_a\). For the clock density \(\abs{D}\) on
the whole quintic, the minimum is \(0\), attained at the equilibria where \(D=0\),
which illustrates \Cref{rem:intrinsic}.

\begin{table}[tbp]
\centering
\begin{tabular}{llll}
\toprule
\(K\) & clock, cost & critical value \(c\) & \(\wt\cA_s=\wt\cM_s\)\\
\midrule
\(\pm\Gamma\) & \(\widehat X_s\), \(\rho\) & \(-7.2146\cdot10^{-3}\) & \(\pm\Gamma\)\\
\(\pm\Gamma\cup\set{\pm p_b}\) & \(\widehat X_s\), \(\rho\) & \(-7.2146\cdot10^{-3}\) & \(\pm\Gamma\)\\
\(K_{\max}=\pm\Gamma\cup\set{\pm p_a,\pm p_b}\) & \(\widehat X_s\), \(\rho\) & \(-1.35164\) & \(\set{\pm p_a}\)\\
\(\pm\Gamma\) & \(Y_s\), \(\abs{D}\) & \(7.796812\) & \(\pm\Gamma\)\\
\(K_{\max}\) & \(Y_s\), \(\abs{D}\) & \(0.04553\) & \(\set{\pm p_a}\)\\
\(\Sigma_S\) & \(Y_s\), \(\abs{D}\) & \(0\) & \(\wt\cM_s=\set{\pm e_1,\pm e_2,\pm e_3}\)\\
\bottomrule
\end{tabular}
\caption{Singular Aubry--Mather structures for the example. In the last line
\(K=\Sigma_S\) contains non-closed orbits, so \Cref{thm:selection} does not
apply; the critical value and the Mather set are obtained from the ergodic
measures, which are supported on equilibria and on \(\pm\Gamma\), and only the
Mather set is reported.}
\label{tab:selection}
\end{table}

\begin{remark}[Reliability]
The computations are floating-point computations and do not constitute a
proof. The qualitative conclusions (existence of \(\Gamma\), \(D\neq0\) and
\(Y_s\neq0\) on \(\Gamma\), regularity, signs of \(\lambda_\Gamma\) and \(\ov\rho_\Gamma\))
are, however, robust: the corresponding quantities are bounded away from
zero by margins much larger than the numerical errors, and they could be
certified by interval arithmetic. Once certified, they would show,
combined with \Cref{prop:persistence}, that regular periodic abnormal
components exist on an open set of cubic fields.
\end{remark}

% =================================================================
\section{Perspectives}\label{sec:perspectives}
% =================================================================

The singular Aubry--Mather structure of \Cref{sec:AMstructures} is the
minimal variational structure compatible with the rigidity of singular
arcs: the only competitors of an abnormal arc are other singular invariant
objects. Three extensions seem natural. First, enlarging the class of
competitors to admissible trajectories of \eqref{eq:system} near the
abnormal cone, which requires combining the present construction with the
closed-measure approach to nonholonomic and control problems of \cite{CM}
and with the local time-minimal synthesis of \cite{BK93,BC03}. Second, near exceptional arcs the time-minimal synthesis is governed by the
interplay between hyperbolic and elliptic normal singular arcs and
bang-bang extremals \cite{BK93,BC03}. \Cref{thm:multiplier} shows that, when
\(\lambda_\Gamma\neq0\), the abnormal cone over \(\Gamma\) is the asymptotic limit
of normal singular arcs of both types; describing the conjugate and cut loci
of the normal extremals near the abnormal cone, and comparing them with the
Aubry set, is a natural problem, see \cite{BCP2026} for a related analysis of
abnormal geodesics and conjugate loci in a spherical setting. Third, higher-dimensional systems, where
the singular flow lives on a singular locus of higher dimension and genuine
recurrence (and thus a nontrivial Mather set) can appear beyond periodic
orbits; there the duality formula and the structure theorem of
\Cref{sec:AMstructures} apply without change.

The first of these directions raises a precise question. Freezing the
control bound at a given scale, \(\abs{u}\le\eta\abs{v}^3\), produces a
homogeneous control system whose projection to \(\Sph^2\) carries an
ergodic control problem: minimizing the asymptotic radial exponent of
admissible trajectories, that is, deciding asymptotic controllability to the
origin. Along singular arcs the running cost of this problem is \(\rho\), so
the structures of \Cref{sec:periodic} describe its singular competitors. A
formal application of the maximum principle along calibrated arcs suggests
that an abnormal component \(\Gamma\) can support an optimal measure of the
full problem only if the corresponding Hamiltonian level, which equals the
critical value, vanishes, that is, only if \(\ov\rho_\Gamma=0\) and the
abnormal cone over \(\Gamma\) is foliated by periodic orbits of
\eqref{eq:system} (\Cref{prop:lift}(c)). Establishing this rigidity
statement, and conversely identifying situations where neutral abnormal
components are selected at the threshold of asymptotic controllability, would
connect the singular Aubry--Mather structure with the stabilization of
homogeneous control systems; we leave it for future work.

% =================================================================


\begin{thebibliography}{99}

\bibitem{ABB20}
A.~Agrachev, D.~Barilari and U.~Boscain,
\newblock \emph{A Comprehensive Introduction to Sub-Riemannian Geometry},
\newblock Cambridge Studies in Advanced Mathematics \textbf{181}, Cambridge University Press, Cambridge, 2020.

\bibitem{AS04}
A.~A.~Agrachev and Yu.~L.~Sachkov,
\newblock \emph{Control Theory from the Geometric Viewpoint},
\newblock Encyclopaedia of Mathematical Sciences \textbf{87}, Springer, Berlin, 2004.

\bibitem{AgSa96}
A.~A.~Agrachev and A.~V.~Sarychev,
\newblock Abnormal sub-Riemannian geodesics: Morse index and rigidity,
\newblock \emph{Ann. Inst. H. Poincar{\'e} Anal. Non Lin{\'e}aire} \textbf{13} (1996), no.~6, 635--690.

\bibitem{Bernard}
P.~Bernard,
\newblock The dynamics of pseudographs in convex Hamiltonian systems,
\newblock \emph{J. Amer. Math. Soc.} \textbf{21} (2008), no.~3, 615--669.

\bibitem{Bernard08}
P.~Bernard,
\newblock Young measures, superposition and transport,
\newblock \emph{Indiana Univ. Math. J.} \textbf{57} (2008), no.~1, 247--275.

\bibitem{BC03}
B.~Bonnard and M.~Chyba,
\newblock \emph{Singular Trajectories and their Role in Control Theory},
\newblock Math{\'e}matiques \& Applications \textbf{40}, Springer, Berlin, 2003.

\bibitem{BCP2026}
B.~Bonnard, O.~Cots and Y.~Privat,
\newblock The Zermelo navigation problem on the 2-sphere of revolution:
an optimal control perspective with applications to micromagnetism,
\newblock \emph{Nonlinearity} \textbf{39} (2026), no.~1, 015006.

\bibitem{BK93}
B.~Bonnard and I.~Kupka,
\newblock Th{\'e}orie des singularit{\'e}s de l'application entr{\'e}e/sortie et
optimalit{\'e} des trajectoires singuli{\`e}res dans le probl{\`e}me du temps minimal,
\newblock \emph{Forum Math.} \textbf{5} (1993), no.~2, 111--159.

\bibitem{BMR}
B.~Bonnard, C.~Mendico and J.~Rouot,
\newblock Feedback equivalence and time minimal geodesics for controlled
Euler equation with a single actuator and attitude control,
\newblock HAL preprint, 2025.

\bibitem{Bousch00}
T.~Bousch,
\newblock Le poisson n'a pas d'ar{\^e}tes,
\newblock \emph{Ann. Inst. H. Poincar{\'e} Probab. Statist.} \textbf{36} (2000), no.~4, 489--508.

\bibitem{CM}
P.~Cannarsa and C.~Mendico,
\newblock Aubry--Mather theory for optimal control systems with nonholonomic constraints,
\newblock \emph{Math. Ann.}, published online 2025.

\bibitem{CI99}
G.~Contreras and R.~Iturriaga,
\newblock \emph{Global Minimizers of Autonomous Lagrangians},
\newblock 22\textsuperscript{o} Col{\'o}quio Brasileiro de Matem{\'a}tica, IMPA, Rio de Janeiro, 1999.

\bibitem{CLT01}
G.~Contreras, A.~O.~Lopes and Ph.~Thieullen,
\newblock Lyapunov minimizing measures for expanding maps of the circle,
\newblock \emph{Ergodic Theory Dynam. Systems} \textbf{21} (2001), no.~5, 1379--1409.

\bibitem{DLA06}
F.~Dumortier, J.~Llibre and J.~C.~Art{\'e}s,
\newblock \emph{Qualitative Theory of Planar Differential Systems},
\newblock Universitext, Springer, Berlin, 2006.

\bibitem{Evans}
L.~C.~Evans,
\newblock Weak KAM theory and partial differential equations,
\newblock in \emph{Calculus of Variations and Nonlinear Partial Differential Equations},
\newblock Lecture Notes in Mathematics \textbf{1927}, Springer, Berlin, 2008, pp.~123--154.

\bibitem{Fathi}
A.~Fathi,
\newblock \emph{The Weak KAM Theorem in Lagrangian Dynamics},
\newblock Cambridge Studies in Advanced Mathematics \textbf{88},
\newblock Cambridge University Press, Cambridge, 2008.

\bibitem{Foote84}
R.~L.~Foote,
\newblock Regularity of the distance function,
\newblock \emph{Proc. Amer. Math. Soc.} \textbf{92} (1984), no.~1, 153--155.

\bibitem{FS04}
A.~Fathi and A.~Siconolfi,
\newblock Existence of \(C^1\) critical subsolutions of the Hamilton--Jacobi equation,
\newblock \emph{Invent. Math.} \textbf{155} (2004), no.~2, 363--388.

\bibitem{GH55}
W.~H.~Gottschalk and G.~A.~Hedlund,
\newblock \emph{Topological Dynamics},
\newblock American Mathematical Society Colloquium Publications \textbf{36}, AMS, Providence, RI, 1955.

\bibitem{Harnack}
A.~Harnack,
\newblock {\"U}ber die Vieltheiligkeit der ebenen algebraischen Curven,
\newblock \emph{Math. Ann.} \textbf{10} (1876), 189--198.

\bibitem{Hughes}
P.~C.~Hughes,
\newblock \emph{Spacecraft Attitude Dynamics},
\newblock Dover Publications, Mineola, NY, 2004.
\newblock Reprint of the 1986 Wiley edition.

\bibitem{Jenkinson}
O.~Jenkinson,
\newblock Ergodic optimization in dynamical systems,
\newblock \emph{Ergodic Theory Dynam. Systems} \textbf{39} (2019), no.~10, 2593--2618.

\bibitem{Mane97}
R.~Ma{\~n}{\'e},
\newblock Lagrangian flows: the dynamics of globally minimizing orbits,
\newblock \emph{Bol. Soc. Brasil. Mat. (N.S.)} \textbf{28} (1997), no.~2, 141--153.

\bibitem{MarkleyCrassidis}
F.~L.~Markley and J.~L.~Crassidis,
\newblock \emph{Fundamentals of Spacecraft Attitude Determination and Control},
\newblock Space Technology Library \textbf{33},
\newblock Springer, New York, 2014.

\bibitem{Mather1991}
J.~N.~Mather,
\newblock Action minimizing invariant measures for positive definite Lagrangian systems,
\newblock \emph{Math. Z.} \textbf{207} (1991), no.~2, 169--207.

\bibitem{Mather1993}
J.~N.~Mather,
\newblock Variational construction of connecting orbits,
\newblock \emph{Ann. Inst. Fourier (Grenoble)} \textbf{43} (1993), no.~5, 1349--1386.

\bibitem{Montgomery94}
R.~Montgomery,
\newblock Abnormal minimizers,
\newblock \emph{SIAM J. Control Optim.} \textbf{32} (1994), no.~6, 1605--1620.

\bibitem{Sorrentino}
A.~Sorrentino,
\newblock \emph{Action-minimizing Methods in Hamiltonian Dynamics: An Introduction to Aubry--Mather Theory},
\newblock Mathematical Notes \textbf{50},
\newblock Princeton University Press, Princeton, NJ, 2015.

\bibitem{Wertz}
J.~R.~Wertz (ed.),
\newblock \emph{Spacecraft Attitude Determination and Control},
\newblock Astrophysics and Space Science Library \textbf{73},
\newblock D.~Reidel Publishing Company, Dordrecht, 1978.

\bibitem{Wilson}
G.~Wilson,
\newblock Hilbert's sixteenth problem,
\newblock \emph{Topology} \textbf{17} (1978), no.~1, 53--73.

\end{thebibliography}
\end{document}